\newcommand{\red}[1]{{\color{black} #1}}
\newcommand{\redf}[1]{{\color{black} #1}}
\newcommand{\blue}[1]{{\color{black} #1}}
\newcommand{\blueb}[1]{{\color{black} #1}}
\newcommand{\vfour}[1]{{\color{black} #1}}
\newcommand{\vfive}[1]{{\color{black} #1}}
\newcommand{\vseven}[1]{{\color{black} #1}}
\definecolor{orange}{rgb}{1,0.68,0}
\definecolor{turquoise}{rgb}{0,1,1}
\newcommand{\ds}{\displaystyle}
\theoremstyle{plain}
\newtheorem{thm}{Theorem}[section]
\newtheorem{cor}[thm]{Corollary}
\newtheorem{lem}[thm]{Lemma}
\newtheorem{assump}[thm]{Assumption}
\theoremstyle{definition}
\newtheorem{dfn}[thm]{Definition}
\theoremstyle{remark} 
\newtheorem{rmk}[thm]{Remark}
\newcommand{\C}{{\mathbb{C}}}
\newcommand{\D}{{\mathbb{D}}}
\newcommand{\B}{\mathcal B}
\newcommand{\R}{{\mathbb{R}}}
\newcommand{\Z}{{\mathbb{Z}}}
\newcommand{\N}{{\mathbb{N}}}
\renewcommand{\Re}{\operatorname{Re}}
\renewcommand{\Im}{\operatorname{Im}}
\newcommand{\eps}{\varepsilon}
\newcommand{\vphi}{\varphi}
\newcommand{\sminus}{\setminus}
\newcommand{\barD}{\overline {\mathbb{D}}}
\newcommand{\calC}{\mathcal{C}}
\renewcommand{\Re}{\operatorname{Re}}
\renewcommand{\Im}{\operatorname{Im}}
\newcommand{\supp}{\operatorname{supp}}
\newcommand{\tinyhsp}{\hspace{.06667em}}
\renewcommand{\le}{\leqslant}
\renewcommand{\ge}{\geqslant}
\renewcommand{\subset}{\subseteq}
\renewcommand{\supset}{\supseteq}
\newcommand{\displaybump}{\hbox to \@totalleftmargin{\hfil}}
\begin{document}

\bibliographystyle{amsalpha}

%\title[Finite order oscillating wandering domains]{Entire functions of finite order with oscillating wandering domains}
\title[Wandering domains for functions of finite order in the class $\mathcal B$]{Wandering domains for entire functions of finite order in the Eremenko-Lyubich class}
\author[D. Mart\'i-Pete \and M. Shishikura]{David Mart\'i-Pete \and Mitsuhiro Shishikura}
\address{Department of Mathematics\\ Kyoto University\\ Kyoto 606-8502\\ Japan}
\email{martipete@math.kyoto-u.ac.jp}
\email{mitsu@math.kyoto-u.ac.jp}
%\email{david.martipete@open.ac.uk}
%\address{Department of Mathematics\\ Faculty of Science\\ Kyoto University\\ Kyoto 606-8502\\ Japan}
%\email{martipete@kyoto-u.ac.jp}
\date{\today}
%\thanks{This research was partially supported by...}
\thanks{This work was partially supported by the Grant-in-Aid JP16F16807 (first author) and the Grant-in-Aid 26287016 and 15K13444 (second author) from the Japan Society for the Promotion of Science.}
%\dedicatory{}

\maketitle

\begin{abstract}
Recently Bishop constructed the first example of a bounded-type transcendental entire function with a wandering domain using a new technique called quasiconfomal folding. It is easy to check that his method produces \blueb{an} entire function of infinite order. We construct the first examples of \blueb{entire} functions of finite order in the class $\mathcal B$ with wandering domains. \blueb{As in Bishop's case,} these wandering domains are of oscillating type, that is, they have an unbounded non-escaping orbit. To construct such functions we use quasiregular interpolation instead of quasiconformal folding, which is much more straightforward. Our examples have order $p/2$ for any $p\in\N$ \blueb{and,} since the order of functions in the class $\mathcal B$ is at least $1/2$, we achieve the smallest possible order. Finally, we can modify the construction to obtain functions of finite order in the class $\mathcal B$ with any number of grand orbits of wandering domains, \blueb{including infinitely many.}% \blueb{and we show that such wandering domains are bounded sets.}%We also provide examples of entire functions in the class~$\mathcal B$ without zeros (of infinite order) that have oscillating wandering domains.
\end{abstract}

\section{Introduction}

Given an entire function $f$ and a point $z_0\in \C$, we define the orbit of $z_0$ under iteration by $f$ as the sequence $z_n:=f(z_{n-1})=f^n(z_0)$, for $n\in\mathbb{N}$, where $f^n$ denotes the composition of $f$ with itself $n$ times, $f^n=f\circ\cdots\circ f$. For each entire function~$f$, we partition the complex plane into the \textit{Fatou set} of $f$, or stable set,
$$
F(f):=\bigl\{z\in \C\, :\, \{f^n\}_{n\in\mathbb N} \mbox{ is a normal family in some neighbourhood of } z\bigr\}
$$
and the \textit{Julia set} of $f$, $J(f):=\C\setminus  F(f)$, where the chaotic behaviour takes place. We refer to a connected component of $F(f)$ as a \textit{Fatou component} of $f$. We also consider the partition of $\C$ into the \textit{escaping set},
$$
I(f):=\{z\in\C\, :\, f^n(z)\to\infty \text{ as } n\to\infty\},
$$
the set of points with bounded orbit, $K(f):=\{z\in\C\ :\ \{f^n(z)\}_n \text{ is bounded}\}$, and the set of points with unbounded non-escaping orbit $BU(f):=\C\setminus (I(f)\cup K(f))$. The book \cite{milnor06} is a basic reference on the iteration of holomorphic functions in one complex variable. See \cite{bergweiler93} for a survey on the dynamics of transcendental entire and meromorphic functions. 

Let $U$ be a Fatou component of $f$, and denote by $U_n$ the Fatou component of $f$ that contains $f^n(U)$, for $n\in \mathbb{N}$. We say that $U$ is a \textit{wandering domain} if $U_n\cap U_m\neq\emptyset$ implies that $m=n$. One of the main differences between the iteration of transcendental entire functions and that of polynomials is the existence of wandering domains. Sullivan \cite{sullivan85} proved that rational functions (and, in particular, polynomials) do not have wandering domains. This question had been open since the time of Fatou and Julia, and Sullivan's use of quasiconformal maps revolutionized complex dynamics. In the subsequent years, many other applications of quasicoformal maps in dynamics appeared, see \cite{shishikura87} and \cite{branner-fagella14} for a compilation of such techniques known as \textit{quasiconformal surgery}. Our construction also uses quasiconformal maps to produce oscillating wandering domains.%: he constructed an infinite dimensional space of quasiconformal deformations of a rational map which contradicted the fact that the space of all rational maps of a given degree has finite dimension.  

Fatou \cite{fatou19} showed that if $U$ is a wandering domain, then all the limit functions of converging partial sequences $(f^{n_k})_k$ on~$U$ are constant. Let $L(U)\subseteq \C\cup\{\infty\}$ denote the set of all possible limit values. For a transcendental entire function $f$, we define the \textit{singular set} of $f$ as $S(f):=\overline{\text{sing}(f^{-1})}$, which consists of all singular values (critical values and asymptotic values) of~$f$ as well as their accumulation points. We also consider the \textit{postsingular set}, $P(f)$, which is the closure of the union of the forward orbits of all points in $S(f)$. Baker \cite{baker70} improved the result of Fatou by showing that limit functions satisfy that $L(U)\subseteq (J(f)\cap \overline{P(f)})\cup\{\infty\}$. The last development in this direction is due to Bergweiler, Haruta, Kriete, Meier and Terglane \cite{bergweiler-haruta-kriete-meier-terglane93}\vseven{,} who showed that, \vseven{in fact}, $L(U)\subseteq (J(f)\cap P(f)')\cup\{\infty\}$, where $P(f)'$ is the derived set of $P(f)$.

According to their orbits and limit functions, wandering domains can be classified into the following types: we say that
\begin{itemize}
\item $U$ is an \textit{escaping} wandering domain if $U\subseteq I(f)$, that is, $L(U)= \{\infty\}$;
\item $U$ is an \textit{oscillating} wandering domain if $U\subseteq BU(f)$, that is, $\{\infty,a\}\subseteq L(U)$ for some $a\in\C$;
\item $U$ is a \textit{bounded orbit} wandering domain if $U\subseteq K(f)$, that is, $\infty\notin L(U)$.
\end{itemize}
However, it is not known whether wandering domains with bounded orbit exist.

The first example of a transcendental entire function with a wandering domain was given by Baker \cite{baker63, baker76} and consisted of an infinite product that had a sequence of multiply connected wandering domains escaping to infinity. In fact, if an entire function has a multiply connected Fatou component, then it must be a (fast) escaping wandering domain \cite{baker84, rippon-stallard05}. In \cite{kisaka-shishikura08}, Kisaka and Shishikura constructed functions with multiply connected wandering domains using quasiconformal surgery. Further examples of simply connected escaping wandering domains are due, for example, to Herman \cite{herman84}, Baker \cite[Example~5.3]{baker84} and Eremenko and Lyubich \cite[Example~2]{eremenko-lyubich87}. Regarding oscillating wandering domains, Eremenko and Lyubich \cite[Example~1]{eremenko-lyubich87} constructed a wandering domain $U$ such that $L(U)$ is an unbounded infinite set. Observe that every Fatou component in $BU(f)$ is necessarily an oscillating wandering domain \cite[Theorem~1.1]{sixsmith-osborne16}. \redf{Nowadays, the existence of wandering domains for certain classes of holomorphic maps continues to be an important question in complex dynamics \cite{mihaljevicbrandt-rempegillen13,astorg-buff-dujardin-peters-raissy16}.}%The set $BU(f)$ was studied systematically for the first time by Sixsmith and Osborne , who showed that for such wandering domains most boundary points lie in $BU(f)$ in the sense of harmonic measure. 

Singular values play an important role in the iteration of entire functions. Hence, among all transcendental entire functions, the classes of functions where we have some control on the singular set tend to have nicer properties. Let us introduce~the class of \textit{finite-type} transcendental entire functions, also known as the \textit{Speiser class},
$$
\mathcal S:=\{f \text{ transcendental entire function}\, :\, S(f) \text{ is finite}\}.
$$ 
Eremenko and Lyubich \cite[Theorem~3]{eremenko-lyubich92} and, independently, Goldberg and Keen \cite[Theorem~4.2]{goldberg-keen86} were able to modify Sullivan's proof to show that functions in the class $\mathcal S$ have no wandering domains. Eremenko and Lyubich \cite{eremenko-lyubich92} also \vseven{considered} the dynamics of the larger class of \textit{bounded-type} transcendental entire functions, 
$$
\mathcal B:=\{f \text{ transcendental entire function}\, :\, S(f) \text{ is bounded}\},
$$
which is now known as the \textit{Eremenko-Lyubich class}. They proved that if $f\in\mathcal{B}$, then $I(f)\subseteq J(f)$  \cite[Theorem~1]{eremenko-lyubich92}. In particular, this means that functions in the class $\mathcal B$ do not have escaping wandering domains. It was a long standing question whether functions in the class $\mathcal B$ could have wandering domains \vseven{at all}.

Recently, Bishop \cite{bishop15} developed a \vseven{revolutionary} new technique to construct transcendental entire functions in the classes $\mathcal S$ and $\mathcal B$ known as \textit{quasiconformal folding}. Given a bipartite tree $T$ with certain bounded geometry conditions and, for every component $\Omega$ of $\C\setminus T$, let $\tau_\Omega:\Omega\to\mathbb H_r=\{z\in\C : \text{Re}\,z > 0\}$ be a conformal map such that $\text{diam}\,\tau_\Omega(e)>\pi$ for every edge $e\in T\cap \partial \Omega$. Then \cite[Theorem~1.1]{bishop15} provides a function $f\in \mathcal S$ and a $K$-quasiconformal map $\phi$ such that $f=(\cosh \circ\, \tau)\circ \phi^{-1}$ outside a thin neighbourhood \vseven{$T(r_0)$} of $T$. The function~$f$ has critical values at $\pm 1$ and no asymptotic value. Bishop also gave a more general version of this construction, namely \cite[Theorem~7.2]{bishop15}, that replaces the tree $T$ by a graph and produces functions in the class $\mathcal B$ with critical points of arbitrarily high degree and allows \vseven{for} asymptotic values. Using this \redf{result}, Bishop \cite[Theorem~17.1]{bishop15} constructed the first example of a function $f\in\B$ with a wandering domain, which is \vseven{of} oscillating \vseven{type}. See  also Bishop's note \textit{Corrections for quasiconformal folding}\footnote{\url{http://www.math.stonybrook.edu/~bishop/papers/QC-corrections.pdf}}, which contains some clarifications about this construction.

Since then, several modifications of Bishop's construction by other authors have appeared. Fagella, Godillon and Jarque \cite{fagella-godillon-jarque15} produced two functions $f,g\in\mathcal B$ without wandering domains such that $f\circ g$ has an oscillating wandering domain. Lazebnik \cite{lazebnik17} proved that the wandering domains in Bishop's construction are bounded, and modified it to create a function $f\in\mathcal B$ with an oscillating wandering domain $U$ such that $f^n(U)$ is unbounded for all $n\in\N$. \vseven{Lazebnik \cite{lazebnik17} also constructed a function in $\mathcal B$ with a wandering domain $U$ such that $L(U)$ is an uncountable set.} Finally, Fagella, Jarque and Lazebnik \cite{fagella-jarque-lazebnik} created a function $f\in\mathcal B$ with an oscillating wandering domain \vseven{on whose orbit $f$ is univalent}. 

We define the \textit{order} of a transcendental entire function $f$ as
$$
\rho(f):=\limsup_{r\to+\infty}\frac{\log\log M(r,f)}{\log r}\in [0,+\infty],
$$
where, for $r>0$, $M(r,f):=\max_{|z|=r}|f(z)|$. Functions of finite order in the class $\mathcal B$ play an important role as Rottenfusser, R\"uckert, Rempe and Schleicher \cite[Theorem~1.2]{rrrs11} proved that the strong Eremeko conjecture holds for such functions and compositions thereof, namely every escaping point can be connected to $\infty$ by a curve of points that escape uniformly (see also \cite{baranski07} for the disjoint type case). Furthermore, Bara\'nski, Jarque and Rempe \cite{baranski-jarque-rempe12} showed that the escaping set of such functions contains an uncountable collection of curves known as a Cantor bouquet. \blue{Note that for a general transcendental entire function, Eremenko \cite{eremenko89} proved that the components of $\overline{I(f)}$ are all unbounded and conjectured that, in fact, all the components of $I(f)$ are unbounded, but this remains an open question.}

\vseven{Observe that} the function $f$ from \cite[Theorem~17.1]{bishop15} has infinite order as $f=g\circ \phi^{-1}$ with $g(z)=\cosh(\lambda\sinh z)$ for $z\in S_+\setminus T(r_0)$, which contains a horizontal band of fixed height around $\mathbb R_+$, and $\phi(z)=z+o(1)$ as $z\to \infty$, and possibly even faster growth on other components \vseven{of $\C\setminus T$}. Our main theorem provides the first example of a function of finite order in the class $\mathcal B$ that has a wandering domain.

\begin{thm}
\label{thm:main}
There exists a transcendental entire function $f\in\mathcal{B}$ of finite order with an oscillating wandering domain. 
\end{thm}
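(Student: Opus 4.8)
The plan is to produce $f$ by straightening a quasiregular map obtained from an explicit finite-order model by surgery on a sparse family of small disks. Fix a transcendental entire function $g$ of order $p/2$ lying in $\mathcal{S}\subset\mathcal B$ (for Theorem~\ref{thm:main} it is enough to take $p=1$, so that, up to an affine change of variable, $g$ is a suitable member of the family $z\mapsto\lambda\cos\sqrt z$), chosen together with a distinguished \emph{highway}: an unbounded set along which the dynamics of $g$ carries points out towards $\infty$ and then back into a fixed bounded region $W_0$, using only the expanding structure of $g$ near its logarithmic tract. Then fix sequences of centres $\zeta_n\in\C$ and radii $0<r_n<\rho_n$ ($n\ge0$) such that the closed disks $\overline{B(\zeta_n,\rho_n)}$ are pairwise disjoint and discrete, $g$ is univalent with bounded distortion on each $B(\zeta_n,\rho_n)$ (so the $\zeta_n$ avoid the critical points of $g$), the annuli $A_n:=B(\zeta_n,\rho_n)\setminus\overline{B(\zeta_n,r_n)}$ are so thin and so spread out that any quasiconformal map with dilatation supported in $\bigcup_n A_n$ is as close to the identity at $\infty$ as we please, and --- crucially --- the centres \emph{oscillate}: $|\zeta_n|\to\infty$ along one subsequence while $|\zeta_{n_k}|\le C$ along another. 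The sequence $(\zeta_n)$ is laid along the highway so that for all but boundedly spaced $n$ one has $\zeta_{n+1}\in g(B(\zeta_n,\rho_n))$ (these are the ``highway'' steps, along which $g$ itself does the moving), while the remaining ``reset'' steps --- which re-aim the orbit before each new excursion --- all occur with $\zeta_n$, $\zeta_{n+1}$ and $g(\zeta_n)$ inside $W_0$.

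Define a quasiregular map $f\colon\C\to\C$ by $f:=g$ on $\C\setminus\bigcup_n B(\zeta_n,\rho_n)$; on each $B(\zeta_n,r_n)$ let $f$ be the affine map onto a round disk $B(\zeta_{n+1},s_{n+1})$ with $s_{n+1}<r_{n+1}$; and on each $A_n$ interpolate quasiregularly between the two boundary behaviours. Because $g$ is univalent with bounded distortion on $B(\zeta_n,\rho_n)$ and the two boundary maps agree in winding number and, after choosing $s_{n+1}$ comparable to $r_n|g'(\zeta_n)|$, in size, a standard interpolation estimate gives $\|\mu_f\|_\infty\le k<1$ with $k$ independent of $n$; on the highway annuli the interpolation can moreover be taken $C^1$-close to $g$, so it introduces no new critical points at all, while on the reset annuli the region swept by $f(A_n)$ stays inside a fixed enlargement of $W_0$. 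Hence $f$ is quasiregular with bounded dilatation, $S(f)$ is contained in $S(g)$ together with the critical values produced by the reset interpolations, all of which lie in a fixed bounded set, so $S(f)$ is bounded; and since $f=g$ off a thin set and on the highway steps $|f|\approx|\zeta_{n+1}|\approx|g(\zeta_n)|\le M(|\zeta_n|,g)$, we get $\log M(r,f)=(1+o(1))\log M(r,g)$, hence $\rho(f)=\rho(g)=p/2$.

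Let $\phi\colon\C\to\C$ be the quasiconformal homeomorphism with $\mu_\phi=\mu_f$, normalised at $\infty$, so that $F:=f\circ\phi^{-1}$ is entire; since composing with the homeomorphism $\phi^{-1}$ on the right changes neither the critical values nor the asymptotic values, $S(F)=S(f)$ is bounded and $F\in\mathcal B$. By the sparseness of $\supp\mu_f\subseteq\bigcup_n A_n$, the map $\phi$ is so close to the identity that $\rho(F)=\rho(f)=p/2<\infty$ and that $\phi$ displaces each disk $B(\zeta_n,r_n)$ by far less than its own radius. Put $\widehat D_n:=\phi(B(\zeta_n,r_n))$; then $\phi$ is conformal on $B(\zeta_n,r_n)$, the $\widehat D_n$ are essentially round, and $F(\overline{\widehat D_n})=f(\overline{B(\zeta_n,r_n)})=\overline{B(\zeta_{n+1},s_{n+1})}\subset\widehat D_{n+1}$, the last inclusion because $s_{n+1}<r_{n+1}$ and $\phi$ barely moves $B(\zeta_{n+1},r_{n+1})$. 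Therefore $F^n(\widehat D_0)\subseteq\widehat D_n$ for all $n$, so every point of $\widehat D_0$ has an orbit that lands in the bounded disks $\widehat D_{n_k}$ infinitely often and in $\widehat D_n$ with $|\zeta_n|\to\infty$ infinitely often, hence lies in $BU(F)$. Finally, along any subsequence the image $F^{m_j}(\widehat D_0)$ lies in $\widehat D_{m_j}$, and after passing to a further subsequence either $\zeta_{m_j}$ stays bounded (so $\{F^{m_j}\}$ is normal by Montel) or $\zeta_{m_j}\to\infty$ with $r_{m_j}=o(|\zeta_{m_j}|)$ (so $F^{m_j}\to\infty$ locally uniformly); thus $\{F^n\}$ is normal on $\widehat D_0$, i.e. $\widehat D_0\subseteq F(F)$. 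The Fatou component containing $\widehat D_0$ then lies in $BU(F)$ and hence, by \cite[Theorem~1.1]{sixsmith-osborne16}, is an oscillating wandering domain, which proves the theorem.

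The main obstacle is keeping $S(F)$ bounded while genuinely forcing the orbit to be unbounded. A reset interpolation generically creates critical values roughly filling the region swept between $g(\partial B(\zeta_n,\rho_n))$ and $f(B(\zeta_n,r_n))$, so such resets may only be performed inside the fixed bounded region $W_0$; consequently every excursion of the orbit to large scales must be realised by the dynamics of $g$ itself, with only $C^1$-small corrections, and the orbit must be able to return from arbitrarily far out to $W_0$ along $g$'s own itinerary before the next reset. Arranging the base function $g$ --- equivalently, choosing the highway --- so that an oscillating itinerary of this kind exists, while simultaneously keeping the surgery disks $B(\zeta_n,\rho_n)$ pairwise disjoint, inside the univalence locus of $g$, and small enough relative to $|\zeta_n|$ that the normality argument above survives --- which forces delicate control of the expansion $|g'(\zeta_n)|$ along the highway and uses in an essential way the tract geometry of an order-$p/2$ function in $\mathcal B$ --- is the heart of the construction.
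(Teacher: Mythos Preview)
Your construction has a genuine gap: degree-one (affine) surgery on the inner disks cannot supply the contraction the argument needs. You require $f(\overline{B(\zeta_n,r_n)})=\overline{B(\zeta_{n+1},s_{n+1})}\subset B(\zeta_{n+1},r_{n+1})$, and for the interpolation on $A_n$ to have dilatation bounded independently of $n$ you are (as you yourself note) forced to take $s_{n+1}$ comparable to $r_n|g'(\zeta_n)|$. Iterating this along an excursion that starts at $\zeta_{m_k}\in W_0$, reaches modulus $R_k\to\infty$, and returns to $\zeta_{m_{k+1}}\in W_0$ gives
\[
r_{m_{k+1}}\ \gtrsim\ r_{m_k}\prod_{j=m_k}^{m_{k+1}-1}|g'(\zeta_j)|.
\]
For any $\cosh$-type model the outward leg along the real direction already contributes a factor of order $\exp\bigl(\sum_j x_j\bigr)$, while the single return step from the imaginary axis has $|g'|=O(1)$; hence the product on the right tends to $\infty$ with $k$. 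Thus $r_{m_{k+1}}\to\infty$ although all the points $\zeta_{m_k}$ lie in the bounded set $W_0$ --- the disks there cannot be pairwise disjoint. No choice of ``thin'' annuli repairs this: the modulus constraint on a $K$-quasiconformal interpolation between degree-one boundary maps ties $s_{n+1}$ to $r_n|g'(\zeta_n)|$ up to constants depending only on $K$, so there is no step at which the accumulated expansion is undone.

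This is precisely why the paper (following Bishop) does \emph{not} use degree-one surgery. The essential device is to replace $g$ on each $D_n$ by a power map of degree $2d_n\to\infty$, so that $g_{\mathbf w}(\tfrac12 D_n)=\D\bigl(w_n,(\tfrac12)^{2d_n}\bigr)$ has super-exponentially small radius; this single step absorbs the whole expansion $\exp\bigl(\sum_{j\le n}x_j\bigr)$ and is what makes the inclusion $f_{\mathbf w}^{n+2}(U_n)\subset U_{n+1}$ in Lemma~\ref{lem:inclusion} possible (compare the inner-radius bound $\rho_{n+1}$ in Lemma~\ref{lem:inner-radius}). The interpolation in $E_n\setminus D_n$ still has uniformly bounded dilatation because the winding number of $2\cosh$ on $\partial E_n$ already equals $2d_n$ (Lemma~\ref{lem:key-interpolation}); and, crucially, the critical \emph{values} $w_n$ of these high-degree critical points all sit in $\overline{\D}(\tfrac12,\tfrac18)$, so $S(f_{\mathbf w})$ remains bounded. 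In short, high-multiplicity critical points with bounded critical values are exactly how one obtains arbitrarily strong contraction while staying in class~$\mathcal B$; your reset steps, being affine inside $W_0$, cannot play this role.
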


To prove this result, we construct a transcendental entire function $f\in\mathcal B$ of order~$1$ that is a modification of the function $g(z)=2\cosh z$. Note that $J(g)=\C$. We give a brief outline of the proof, which follows some of the ideas from \cite{bishop15} but is much simpler as it does not require quasiconformal folding and uses quasiregular interpolation instead. \blue{Geometrically, we collapse many simple critical points of $g$\vseven{, which are located at $\pi i\Z$,} into critical points of higher degree to obtain a certain contraction, and then adjust the position of the critical values which is a much finer tuning.} First, we produce a quasiregular map $g_{\mathbf{w}}$, \vseven{with} $\mathbf{w}=(w_n)_n$, that equals~$g$ on $\C\setminus E$, where $E$ is a disjoint union of squares~$\{E_n\}_n$ centered along the imaginary axis (see~Figure~\ref{fig:setup}). Inside each square~$E_n$, we place a disc~$D_n$ on which \blue{$g_\mathbf{w}$} is a power map that has a critical point of multiplicity~$d_n$ with critical value $w_n\in  \blue{\mathbb D(\tfrac{1}{2},\tfrac{1}{8})}$. To join these two holomorphic functions\vseven{,} in the doubly connected regions \vfive{$\{E_n\setminus D_n\}_n$} we use quasiregular interpolation. At this point it is important to make sure that the \blue{dilatation} of the interpolating map is idependent of $d_n$. Then, it follows from the Measurable Riemann Mapping Theorem that there is an entire function $f_{\mathbf{w}}$ and a quasiconformal map  $\phi_{\mathbf{w}}$ such that $f_{\mathbf{w}}=g_{\mathbf{w}}\circ\phi^{-1}_{\mathbf{w}}\in\mathcal{B}$ \blue{as $S(f_\mathbf{w})=\{\pm 2\}\cup
\overline{\{w_n\}}_n$ \redf{and the sequence $\{w_n\}_n$ converges to a point $w_\infty\in \overline{\D}(\tfrac{1}{2},\tfrac{1}{8})$ (see Remark~\ref{rmk:w-infty})}. \blue{In order to show that $f_{\mathbf{w}}$ has an oscillating wandering domain $U$, we need to estimate the quasiconformal map~$\phi_{\mathbf{w}}$.} To that end, the main ingredient will be a recent result on quasiconformal maps by Shishikura \cite[Lemma~9]{shishikura} (see Lemmas \ref{lem:qc-estimate1}, \ref{lem:qc-estimate2} and \vseven{\ref{lem:qc-estimate3}}, which are of independent interest). \vseven{In order to construct a wandering domain, we consider a specific inverse orbit $U_n$ of $\frac{1}{2}D_n$ 
(more precisely, $\phi_\mathbf{w}(\frac{1}{2}D_n)$) near $x_0=\frac{1}{2}$, with center $c_n(\mathbf{w})$ corresponding to the critical point in $D_n$. 
Our goal is to show that $f_\mathbf{w}^{n+2}(U_n) \subseteq g_\mathbf{w}(\tfrac{1}{2}D_n) \subset U_{n+1}$ by giving estimates on 
the size of the domains $\{U_n\}_n$ and by choosing the parameter $\mathbf{w}$ so that the critical value $w_n$ of $g_\mathbf{w}\vert_{D_n}$ hits 
the next center $c_{n+1}(\mathbf{w})$ for large $n$. The latter problem (a simultaneous shooting problem) is solved using 
Brouwer's Fixed Point Theorem.} \redf{Observe that the function $f_\mathbf{w}$ has only finitely many grand orbits of critical points and the wandering domain $U$ satisfies that} \redf{$L(U)=\{f_\mathbf{w}^n(w_\infty)\}_n\cup\{\infty\}$}}. 

\blue{We would like to emphasize that the map from our construction is much more simple than the previous example involving quasiconformal folding. We only alter the base map $g(z)=2\cosh z$ inside the squares \vfive{$\{E_n\}_n$}, and the parameters \vfive{$\{d_n\}_n$ (and also $\{R_n\}_n,\{h_n\}_n$) are given in the beginning with explicit formulas}. Using quasiconformal folding allows for a small support of the Beltrami coefficient $\mu_\phi$, but \vfive{one} often lose control over the order of the resulting function $f$, while in our case the support of $\mu_\phi$ is large in the Euclidean sense, but sparse, and we can achieve finite order. In \cite{bishop15}, since the logarithmic area of $\supp\mu_\phi$ decays exponentially, it follows from Dynkin's Theorem that \vfive{one} can normalise $\phi$ so that $\phi(z)=z+o(1)$ as $z\to\infty$. In contrast, our \vseven{quasiconformal mapping \vseven{$\phi_\mathbf{w}$} may have $\phi_\mathbf{w}(z)-z$ unbounded}, but we developed new estimates that allow us to have the necessary control over \vseven{$\phi_\mathbf{w}$}, \vseven{for example, we show that $\left|\log \frac{\phi_\mathbf{w}(z)}{z}\right|<\varepsilon$} (see Section~\ref{sec:qc-estimate}).}

\redf{Our shooting method is quite different from \cite[Theorem~17.1]{bishop15} (as well as \redf{\cite{fagella-godillon-jarque15,lazebnik17,fagella-jarque-lazebnik}}). Their argument to choose $\mathbf{w}$ is based on a sequence of successive perturbations so that in each step we obtain a new quasiconformal map $\phi$, and hence it is key to check that the new map $f=g\circ\phi^{-1}$ still satisfies the inclusion relations of domains that we set up in all the previous steps. This is guaranteed by the thinness of the support of the quasiconformal maps used to choose the position of the critical values $\{w_n\}_n$, which is achieved by giving a lower bound for the sequence $\{d_n\}_n$. Note that in order to avoid any circularity, it is important to first fix the sequence $\{d_n\}_n$ and then start choosing $\{w_n\}_n$ inductively (see Bishop's note\footnotemark[1]). To do so, it is key to obtain a uniform estimate for the inner radius of the sets $\{U_n\}_n$ in the closure of all quasiconformal maps $\phi$ that arise from this construction with sufficiently large sequences $\{d_n\}_n$ and $\{w_n\}_n$ in a compact polydisc, which is a compact subset of the space of homeomorphisms from $\C$ to~$\C$. In this paper we determine the $\{d_n\}_n$ explicitly at the beginning, and then use a shooting method to choose $\{w_n\}_n$ (see Section~\ref{sec:shooting}). 
%the authors of the present paper cannot foresee how to do that if it requires to keep changing the multiplicities $\{d_n\}_n$ of the critical points of the construction as suggested in these papers.
%Once the $\{d_n\}_n$ are fixed, and after checking that the construction has certain continuity properties, one can use a fixed point theorem to choose the sequence $\mathbf{w}$ as in our construction. 
Independently, for a different problem concerning meromorphic functions, Bishop and Lazebnik \cite{bishop-lazebnik} have developed a similar strategy using a fixed point theorem combined with Bishop's quasiconformal folding construction.} 

%The map $G$ will equal $g(z)=2\cosh z$ on $\C\setminus E$, where $E$ is a disjoint union of squares $E_n$ centered along the imaginary axis, and each square $E_n$ will contain a disc $D_n$ on which $g$ will be a power map which will create a critical point of multiplicity $d_n$. The goal of this section is to show that we can interpolate these two maps in the doubly connected domains $E_n\setminus D_n$ with a quasiconformal map independently of the values of $d_n$ that we choose on each disc~$D_n$ for $n\in\mathbb N$.

%To prove this result, \red{Outline the proof a bit, explain the main tool is quasiregular interpolation and, more precisely, linear interpolation.}

The \textit{lower order} of a transcendental entire function can be defined replacing the $\limsup$ in the definition of the order by $\liminf$. By a result of Heins \cite{heins48}, functions in the class $\mathcal{B}$ have lower order (and, hence, also order) greater than or equal to $1/2$. In the following theorem, we adapt the previous construction to obtain functions with order $p/2$ for $p\in\N$ \vseven{and, in particular, we achieve order} $1/2$, which is the smallest possible value in \vseven{class $\mathcal B$}.

\begin{thm}
\label{thm:oder-half}
For every $p\in \mathbb{N}$, there exists a transcendental entire function in the class $\mathcal{B}$ of order $p/2$ with an oscillating wandering domain. 
\end{thm}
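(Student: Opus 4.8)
The plan is to repeat the proof of Theorem~\ref{thm:main} with the base map $g(z)=2\cosh z$ replaced by
\[
g_p(z):=2\sum_{m\ge 0}\frac{z^{pm}}{(2m)!},
\]
an entire function which, on any sector of $\C\sminus\{0\}$, agrees with $2\cosh\big(z^{p/2}\big)$ for the appropriate branch of $z^{p/2}$ (for $p=2q$ it is simply $2\cosh(z^q)$, and $g_2=g$). From $|g_p(z)|\le 2\cosh(|z|^{p/2})$, with equality up to a bounded factor on $\R_+$, one gets $M(r,g_p)\asymp e^{r^{p/2}}$, hence $\rho(g_p)=p/2$; moreover $g_p\in\mathcal{S}$ with $S(g_p)=\{\pm 2\}$, since on $\C\sminus\{0\}$ the critical points of $g_p$ are exactly the points with $z^{p/2}\in\pi i\Z$, all mapped to $2\cosh(\pi i n)=\pm 2$, the origin (a critical point precisely when $p\ge 2$) is mapped to $g_p(0)=2$, and $g_p$ has no finite asymptotic value because $2\cosh$ has none while $z^{p/2}\to\infty$ as $z\to\infty$.

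The key point is that $g_p=(2\cosh)\circ(z\mapsto z^{p/2})$ on any sector of $\C\sminus\{0\}$, so $g_p$ is obtained from $g$ by the conformal substitution $w=z^{p/2}$. The construction of Theorem~\ref{thm:main} lives in a neighbourhood of the squares $E_n$ (about points of $\pi i\Z$), the target $x_0=\tfrac12$, and certain inverse orbits $U_n$, and one checks that this whole region fits inside a sector $V\subsetneq\C$ of the $w$-plane (of angular width less than $\pi$ when $p=1$, which is arranged using the conjugation symmetry of $g_1$). One then picks a sector $\Sigma\subset\C\sminus\{0\}$ on which $z\mapsto z^{p/2}$ is a conformal bijection onto $V$, and transports the entire construction (squares, discs $D_n$, power maps of degree $d_n$, the quasiregular interpolation in $E_n\sminus D_n$) into $\Sigma$, leaving $g_p$ unmodified on $\C\sminus\Sigma$. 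By the Measurable Riemann Mapping Theorem one obtains $f_{p,\mathbf{w}}=g_{p,\mathbf{w}}\circ\phi_\mathbf{w}^{-1}\in\mathcal{B}$, where $g_{p,\mathbf{w}}$ agrees with $g_p$ off a sparse union of small squares and $S(f_{p,\mathbf{w}})=\{\pm 2\}\cup\overline{\{w_n\}}_n$.

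Because $w=z^{p/2}$ acts on logarithms as the linear map $\log w=\tfrac p2\log z$, it is bi-Lipschitz in the logarithmic metric on $\Sigma$, so the quasiconformal estimates of Section~\ref{sec:qc-estimate} (notably $\big|\log\tfrac{\phi_\mathbf{w}(z)}{z}\big|<\varepsilon$) and the simultaneous shooting argument via Brouwer's Fixed Point Theorem carry over with only harmless factors of $\tfrac2p$ in the constants. As for the order: off the sparse modification set one has $f_{p,\mathbf{w}}(z)=g_p(\phi_\mathbf{w}^{-1}(z))$ with $\phi_\mathbf{w}^{-1}(z)\asymp z$, so $\log\log M(r,f_{p,\mathbf{w}})=\tfrac p2\log r+O(1)$; and inside the squares $g_{p,\mathbf{w}}$ is a quasiregular interpolation between $g_p$ on $\partial E_n$ and a bounded power map on $\partial D_n$, hence cannot grow faster than $g_p$. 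Thus $\rho(f_{p,\mathbf{w}})=p/2$ exactly, and $f_{p,\mathbf{w}}$ has an oscillating wandering domain by the same argument as in Theorem~\ref{thm:main}.

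The step I expect to be the real obstacle is the geometric bookkeeping hidden in the second paragraph. The critical points of $g_p$ no longer form the equally spaced lattice $\pi i\Z$ but lie along finitely many rays from the origin with $n$-dependent spacing, so one must verify that, under the substitution $w=z^{p/2}$, the squares $E_n$ and discs $D_n$ pull back to an admissible configuration --- pairwise disjoint and satisfying the size, spacing and separation hypotheses needed both for the quasiregular interpolation and for the estimates of Section~\ref{sec:qc-estimate} --- and that the sector $\Sigma$ genuinely contains all of the inverse orbits $U_n$. Once the construction is carried out in the $w$-coordinate exactly as in the proof of Theorem~\ref{thm:main} and then pulled back, this reduces to the elementary fact that $z\mapsto z^{p/2}$ restricts to a holomorphic bijection of a suitable sector of $\C\sminus\{0\}$ onto any prescribed sector $V\subsetneq\C$ (subject, when $p=1$, to $V$ having angular width less than $\pi$).
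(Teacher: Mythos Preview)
Your plan is essentially the paper's: both replace $g$ by $\breve g_p(z)=2\cosh(z^{p/2})$ and push the order-$1$ construction through the substitution $w=z^{p/2}$. The one structural difference is that you propose to transplant the modification into a \emph{single} sector $\Sigma$, whereas the paper uses the global formula $g_{p,\mathbf w}(z):=g_{\mathbf w}(z^{p/2})$, which is well defined on all of $\C$ precisely because $g_{\mathbf w}$ was made even; this places the support of $\mu$ symmetrically at all $p$ of the $p$-th roots of $-h_n^2$ rather than at one. Your variant is legitimate (boundary matching on $\partial\Sigma$ is immediate since $g_{\mathbf w}=2\cosh$ there) and yields smaller support, but the paper's device avoids having to carve out a sector and check continuity across its edges. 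Incidentally, you only need $V$ to contain the squares $E_n$, not the orbit or the $U_n$'s: outside $\Sigma$ the map is the holomorphic $g_p$, so the orbit along $\R_+$ is already taken care of.

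The substantive bookkeeping you flag is exactly what the paper carries out. In the $z$-plane the pulled-back discs have centres $\zeta_{n,\ell}=e^{(2\ell+1)\pi i/p}h_n^{2/p}$ and radii of order $h_n^{2/p-1}R_n$, so $r_n/|\zeta_{n,\ell}|\asymp R_n/h_n$ and Assumption~\ref{ass:qu-estimates} goes through unchanged. Where ``harmless factors of $2/p$'' undersells matters is the inner-radius estimate: with $\psi_{\mathbf w}(w)=\phi_{\mathbf w}(w^{2/p})$ in place of $\phi_{\mathbf w}$ in Diagram~1, each step of the chain rule contributes a Jacobian $\tfrac{2}{p}w^{2/p-1}$, so the analogue of \eqref{eq:estimate-inverse-der} acquires $(\tfrac{2}{p}-1)\sum_j\log|\hat c_{n,j}|$. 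The paper shows that after writing $\log x_j=\tfrac p2\log\xi_j$ and $\log\xi_j=x_{j-1}+o(1)$ these extra terms telescope against $\log R_n''$, yielding a modified inner radius $\rho_n'=\exp\bigl(-nC_5'-\tfrac p2\sum_{j\le n-2}x_j-px_{n-1}\bigr)$; this is the computation you must actually do. Finally, for $p=1$ the image $\psi_{\mathbf w}(D_n)$ lands near the \emph{negative} real axis, which $g(Q_n)$ does not cover; the paper fixes this by enlarging $Q_n$ to a rectangle $Q_n'$ with $-\tfrac\pi2<\mathrm{Im}\,z<\tfrac{3\pi}2$. Your appeal to the conjugation symmetry of $g_1$ does not by itself arrange this covering; one really needs the shifted rectangle.
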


The reason why we only obtain orders that are half integers is that our construction is based on the function $g(z)=2\cosh z$ which is an even function and therefore $\vseven{\breve{g}_p}(z)=2
\cosh(z^\frac{p}{2})$ is an entire function for every $p\in\N$.% \vseven{Note that} the functions \vseven{$f_{p,\mathbf{w}}$} that we obtain are not symmetric with respect to the real line.

%Entire functions with no zeros often have more restricted properties than general entire functions. For example, Baker \cite[Theorem~1]{baker87} proved that all but one of their Fatou components must be simply connected, and hence they do not have multiply connected wandering domains. Such functions are all of the form $f=\exp \circ\,g$ for some non-constant entire function $g$. P\'olya \cite{polya} showed that if an entire function with no zeros has finite order, then necessarily it is of the form $f=\exp\circ\,P$ with $P$ being a polynomial, and hence their order is a natural number. It is easy to see that these functions have at most as many critical values as the degree of $P$ and $0$ is the only finite asymptotic value. Thus, entire functions of finite order with no zeros belong to the class $\mathcal S$, and have no wandering domains. The first example of an entire function that is a holomorphic self-map of $\C^*$ and has a wandering domain was produced by Baker~\cite[Theorem~4]{baker87}. Further examples of general holomorphic self-maps of $\C^*$ with wandering domains were given in \cite{martipete3}. We construct an entire function $g\in\mathcal B$ so that $f=\exp\circ\, g\in\mathcal{B}$ has an oscillating wandering domain. By the previous remarks, $f$ must have infinite order. 

%\begin{thm}
%\label{thm:nozeros}
%There exists a function $F\in\mathcal{B}$ with no zeros that has an oscillating wandering domain. The function $F$ has infinite order but finite hyper-order.
%\end{thm}

In \cite{fagella-godillon-jarque15}, Fagella, Godillon and Jarque showed that the function from \cite[Theoprem~17.1]{bishop15} has precisely two grand orbits of oscillating wandering domains, and no other wandering domain. We can modify our construction to obtain functions of finite order in the class $\mathcal B$ with any number of grand orbits of oscillating wandering domains, including functions with infinitely many grand orbits of wandering domains. 

\begin{thm}
\label{thm:infinitely-many}
For every $q\in\mathbb{N}\cup\{\infty\}$, there exists an entire function of finite order in the class $\mathcal{B}$ with exactly $2q$ grand orbits of oscillating wandering domains. 
\end{thm}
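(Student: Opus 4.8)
The plan is to build on the construction used to prove Theorems~\ref{thm:main} and~\ref{thm:oder-half}, but now arranging the squares $\{E_n\}_n$ into $2q$ disjoint \emph{families} (for $q$ finite) rather than a single string, and then running the shooting argument independently on each family. Concretely, for $q\in\N$ I would pick $2q$ disjoint vertical sub-strips of the imaginary axis (or, in the order-$p/2$ setting, $2q$ disjoint rays emanating to infinity in a sector), and inside each one place a sequence of squares $\{E_n^{(j)}\}_n$, $j=1,\dots,2q$, each carrying a disc $D_n^{(j)}$ on which $g_{\mathbf w}$ is a power map of multiplicity $d_n^{(j)}$ with critical value $w_n^{(j)}\in\D(\tfrac12,\tfrac18)$. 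The base map is still $g(z)=2\cosh z$ (or $\breve g_p$), only altered inside $\bigcup_{j,n}E_n^{(j)}$; the Beltrami coefficient remains supported on a sparse set, so the Measurable Riemann Mapping Theorem again yields an entire $f_{\mathbf w}\in\mathcal B$ of finite order, with $S(f_{\mathbf w})=\{\pm2\}\cup\overline{\{w_n^{(j)}\}}_{j,n}$.

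The key steps, in order, would be: (1) verify that the quasiconformal estimates of Section~\ref{sec:qc-estimate} (Lemmas~\ref{lem:qc-estimate1}, \ref{lem:qc-estimate2}, \ref{lem:qc-estimate3}) still apply when the support of $\mu_\phi$ is this larger but still sparse union --- this is essentially automatic provided the families are placed far enough apart and the squares shrink fast enough, so that $\bigl|\log\tfrac{\phi_{\mathbf w}(z)}{z}\bigr|<\varepsilon$ still holds; (2) for each $j$, set up the inverse orbit $U_n^{(j)}$ of $\tfrac12 D_n^{(j)}$ near $x_0=\tfrac12$ exactly as before, with centre $c_n^{(j)}(\mathbf w)$ the image of the critical point; (3) solve the \emph{simultaneous} shooting problem --- choose $\mathbf w=(w_n^{(j)})_{j,n}$ so that the critical value $w_n^{(j)}$ of $g_{\mathbf w}|_{D_n^{(j)}}$ hits the next centre $c_{n+1}^{(j)}(\mathbf w)$ for all $j$ and all large $n$ --- using Brouwer's Fixed Point Theorem on the product of the relevant parameter discs. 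The point is that moving $w_n^{(j)}$ principally affects the orbit in family $j$, and the cross-dependence through $\phi_{\mathbf w}$ is uniformly small, so the fixed-point map is still a self-map of a convex compact set and close to the identity in the relevant sense. This produces $2q$ grand orbits of oscillating wandering domains (the factor $2$ coming, as in \cite{fagella-godillon-jarque15}, from the symmetry $z\mapsto-z$ of $g$, equivalently from the two points $\pm\tfrac12 D_n^{(j)}$ in the preimage under the power map, or by placing the families symmetrically with respect to the real axis). Finally one must check that there are \emph{no other} wandering domains: this follows the same argument as in the single-family case (analysing $P(f_{\mathbf w})$ and using that $L(U)\subseteq J(f)\cap P(f)'$), since $P(f_{\mathbf w})'$ is still a controlled countable set.

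For $q=\infty$ the modification is to use infinitely many families $\{E_n^{(j)}\}_n$, $j\in\N$, but now with the $j$-th family pushed progressively further out towards infinity and with squares shrinking fast enough (e.g. doubly-exponentially in both $n$ and $j$) that: the total support of $\mu_\phi$ remains sparse in the logarithmic sense needed for finite order and for the quasiconformal estimates; and the cross-interaction between families stays summably small. The simultaneous shooting problem is then infinite-dimensional, so one replaces the finite Brouwer argument by the Schauder--Tychonoff fixed point theorem on a product $\prod_{j,n}\overline{\D(\ast,r_{j,n})}$ of parameter discs, which is compact in the product topology, with the fixed-point map continuous because each coordinate of the "hit the next centre" map depends continuously (and with uniformly decaying modulus of continuity) on all the parameters.

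I expect the main obstacle to be step~(3) in the infinite case: ensuring that the shooting map is a genuine self-map of the product of parameter discs and is continuous for the product topology, which requires quantitative control showing that perturbing $w_m^{(i)}$ displaces a given centre $c_{n+1}^{(j)}(\mathbf w)$ by an amount that is summable over $(i,m)\neq(j,n)$ and tends to $0$. This is where the sparseness of the square configuration and the finite-order bookkeeping must be balanced against the need for the inverse-orbit discs $U_n^{(j)}$ to remain large enough to contain $f_{\mathbf w}^{n+2}(U_n^{(j)})$; getting all these inequalities to hold simultaneously, uniformly in $j$, is the delicate point, but it is a quantitative refinement of estimates already in hand rather than a new phenomenon.
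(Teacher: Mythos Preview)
Your outline captures the right high-level picture --- add extra families of squares, run the simultaneous shooting, and then argue ``exactly $2q$'' --- but there are two genuine gaps.

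First, your account of the factor $2$ is wrong. In the even construction $g_{\mathbf w}(-z)=g_{\mathbf w}(z)$, the discs $D_n$ and $D_{-n}$ are mapped to the \emph{same} critical value $w_n$, so they lie in the same grand orbit; the symmetry $z\mapsto -z$ does not double the count. The paper handles this by switching (Section~\ref{sec:real-symmetric-version}) to a real-symmetric modification $\tilde g_{\mathbf w}$ with $\tilde g_{\mathbf w}(\bar z)=\overline{\tilde g_{\mathbf w}(z)}$, so that $\tilde f_{\mathbf w}$ commutes with complex conjugation. Then the upper- and lower-half-plane discs carry critical values $w_n$ and $\overline{w_n}$ and produce conjugate, hence distinct, wandering domains. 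This real symmetry is also what lets the paper separate grand orbits: the curves $L_k=\tilde\phi_{\mathbf w}(\{z:\Im z=k\pi\})$ map into $\R$ under $\tilde f_{\mathbf w}$, hence lie in $J(\tilde f_{\mathbf w})$ by Lemma~\ref{lem:real-symmetry}, and they separate the sets $\tilde\phi_{\mathbf w}(\tfrac12 D_n^j)$ (Lemma~\ref{lem:separation}). You give no mechanism for proving that your $2q$ families are in distinct grand orbits, and without a topological barrier in the Julia set this is not automatic.

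Second, your ``no other wandering domains'' step is too vague. Knowing $L(U)\subseteq J(f)\cap P(f)'$ is not enough; the paper follows \cite{fagella-godillon-jarque15} and builds a forward-invariant closed set $A\supseteq S(f)$ containing $\R$ and the orbits of all the $U_n^j$, then invokes the hyperbolic-contraction result \cite[Theorem~4.1]{mihaljevicbrandt-rempegillen13} to force any putative extra wandering domain to eventually meet some $\tilde\phi_{\mathbf w}(\tfrac12 D_n^j)$. On the geometric side, note also that the paper does not use ``$2q$ sub-strips'': it stacks $q$ squares $E_n^j=E_n+6jR_n$ in the gap between $E_n$ and $E_{n+1}$, adding one new family per level (so at level $n$ there are $\min\{n-2,q\}$ squares). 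This handles $q=\infty$ uniformly, and the infinite shooting problem is solved by the same finite-truncation-plus-compactness argument as Lemma~\ref{lem:shooting}; Schauder--Tychonoff is not needed.
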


The functions from Theorem~\ref{thm:infinitely-many} all have an even number of grand orbits of wandering domains and this is due to the fact in the construction we use a slightly different function to that in Theorems~\ref{thm:main} and \ref{thm:oder-half}, which is symmetric with respect to the real line but is not an even function as before. However it is also possible to construct functions with odd numbers of wandering domains (see Remark~\ref{rmk:odd-number-wd}).
%To prove this, we modify the construction from Theorem~\ref{thm:oder-half}. The same arguments as in \cite{fagella-godillon-jarque15} ensure that all the wandering domains of these functions come from the construction, allowing us to prescribe the exact number of grand orbits of wandering domains. 

%%Lazebnik~\cite{lazebnik17} proved that all the wandering domains in Bishop's example are bounded sets. We devote the last part of the paper to show that every wandering domain of the functions from Theorem~\ref{thm:infinitely-many} is also bounded.

%%\begin{thm}
%%\label{thm:bounded-wd}
%%There exists an entire function of finite order in the class $\B$ with a wandering domain $W$ such that all the components of the grand orbit of $W$ are bounded sets.
%%\end{thm}

%%Finally, observe that if we want to obtain a function of infinite order in the class~$\mathcal B$ with a wandering domain, we can take a function $f$ of finite order provided by Theorem~\ref{thm:infinitely-many} and compose it with itself. Since $F(f^2)=F(f)$, if $f$ has $2q$ grand orbits of wandering domains, then $f^2$ has $4q$ grand orbits of wandering domains.% and if the wandering domains of $f$ were all bounded sets, so will be those of $f^2$.

%Although the iteration of transcendental (entire) functions dates back to the times of Fatou \cite{fatou26},\\

%\cite{ahlfors73}

\vspace{5pt}

\noindent 
\textbf{Structure of the paper.} In Section~\ref{sec:setup} we define the main quantities and sets involved with the construction, and also prove some growth estimates of the map $g(z)=2\cosh z$ which is the base for the construction. We use quasiregular interpolation to modify $g$ to a quasiregular function $g_{\mathbf{w}}$ such that $f_{\mathbf{w}}=g_{\mathbf{w}}\circ \phi^{-1}_{\mathbf{w}}\in\mathcal{B}$ has the desired dynamics; see Figure~\ref{fig:setup} in p.~\pageref{fig:setup}. The proof of our main result, Theorem~\ref{thm:main}, is in Section~\ref{sec:proof-main-thm}. In order to keep the outline of the proof of Theorem~\ref{thm:main} as clear as possible, we placed the details of the construction of the quasiregular interpolation from Section~\ref{sec:interpolation} in Appendix~\ref{sec:a1}, and the proofs of the estimates about the integrating quasiconformal map \vseven{$\phi_\mathbf{w}$} from Section~\ref{sec:qc-estimate} are given in Appendix~\ref{sec:proof-of-estimates}. For those familiar with Bishop's construction, the domains $\{U_n\}_n$ are defined in Section~\ref{sec:domains}; see Diagram~1 in p. \pageref{diag:1} \redf{and Figure~\ref{fig:setup2} in p. \pageref{fig:setup2}}. In Section~\ref{sec:derivatives} we estimate the inner radius of $U_n$ and in Section~\ref{sec:shooting} we define $\mathbf{w}$. The modifications to obtain functions of order $p/2$ for $p\in \N$ (Theorem~\ref{thm:oder-half}) and functions with $2q$ grand orbits of wandering domains for $q\in\N\cup\{\infty\}$ (Theorem~\ref{thm:infinitely-many}) are done in Sections~\ref{sec:oder-half} and \ref{sec:infinitely-many}, respectively. %Finally, the proof of Therem~\ref{thm:bounded-wd} regarding the boundedness of these sets is in Section~\ref{sec:bounded-wd}.

\vspace{5pt}

\noindent
\textbf{Notation.} Throughout the paper, $\Z^*:=\Z\setminus\{0\}$ and, for $N\in\N$, we define\linebreak $\N_N:=\N\setminus\{1,\hdots,N-1\}$ and $\Z_N:=\Z\setminus\{-N+1,\hdots,N-1\}=\N_N\cup(-\N_N)$. Let $\mathbb R_+\!:=\!\{x\in \mathbb{R}:x\!>\!0\}$ and \mbox{$\mathbb R_-\!:=\!\{x\in \mathbb{R}:x\!<\!0\}$}. For $x\in\mathbb R$, we use $\left \lfloor{x}\right \rfloor$ to denote the largest integer not exceeding $x$. For $z_0,z_1\in \mathbb C$, we use $[z_0,z_1]$ for the line segment joining $z_0$ to $z_1$. We define the Euclidean discs of centre $z_0\in \C$ and radius $R>0$ by
%$$
%A(r_1,r_2):=\{z\in \mathbb C\ :\ r_1<|z|<r_2\}  ~\mbox{ and }~ \overline{A}(r_1,r_2):=\{z\in \mathbb C\ :\ r_1\leqslant |z|\leqslant r_2\},
%$$
$$
\mathbb{D}(z_0,R):=\{z\in\C\, :\, |z-z_0|<R\} 
$$
and put $\barD(z_0,R):=\overline{\D(z_0,R)}$, $\mathbb D:=\D(0,1)$ and $\mathbb D_R:=\D(0,R)$. \blue{Suppose that} $D=\D(z_0,R)$ and $\lambda\in\R_+$, then $\lambda D:=\D(z_0,\lambda R)$. %For $a,b>0$, we define the region bounded by the ellipse of semi-axes $a,b$ by
%$$
%\mathbb{E}(a,b):=\left\{z\in\C\ :\ \frac{(\text{Re}\,z)^2}{a^2}+\frac{(\text{Im}\,z)^2}{b^2}< 1\right\},
%$$
%and it will be convenient to use $\mathbb{E}_R:=\mathbb{E}(2\cosh R,2\sinh R)$. 
We denote the upper half-plane by $\mathbb H:=\{z\in\C\ :\ \textup{Im}\,z>0\}$ and also use $\mathbb H_l:=\{z\in\C\, :\, \textup{Re}\,z<0\}$ and $\mathbb H_r:=\{z\in\C\, :\, \textup{Re}\,z>0\}$ for the left and right half-planes, respectively. 

If $\Omega\subseteq \C$ is a hyperbolic domain, then $\textup{dist}_\Omega(z,w)$ is the hyperbolic distance between two points $z,w\in \Omega$. We define the hyperbolic disc of centre $z_0\in \Omega$ and radius $R>0$ by
$$
\D_\Omega(z_0,R):=\{z\in\Omega\, :\, \textup{dist}_\Omega(z,z_0)<R\}.
$$
We will also consider the cylinder $\mathcal C:=\mathbb{C}/2\pi i\mathbb{Z}$ and the distance on $\mathcal C$ given by 
$$
|w|_\mathcal C:=\inf_{n\in\Z} |w+2\pi n i|,\quad \text{ for } w\in \mathcal C.
$$

\vspace{10pt}

\noindent
\textbf{Acknowledgments.} \blue{The authors would like to thank Christopher Bishop, N\'uria Fagella, Xavier Jarque, Masashi Kisaka and Kirill Lazebnik for helpful discussions on this topic.}

\section{Setup and properties of the base map $g(z)=2\cosh z$}

\label{sec:setup}

In order to prove Theorem \ref{thm:main}, we first construct a function $f\in \mathcal B$ of order $1$, that is, the case $p=2$, and then we will modify it to obtain functions of order $p/2$ for any $p\in\mathbb N$. For the convenience of the reader who may be familiar with the construction of the oscillating wandering domain from \cite[Theorem~17.1]{bishop15} (and also \cite{fagella-godillon-jarque15}), we use a similar notation wherever possible.

The function that is the base of our construction is $g(z):=2\cosh z$. Note that $g(z)=e^z+e^{-z}\sim e^z$ for $\text{Re}\,z\gg 1$. Define the horizontal band
$$
S_+:=\{z\in\mathbb{C}\, :\, \text{Re}\,z>0,\ |\text{Im}\,z|<\pi\}
$$
and, for $x>1$, consider the rectangle
$$
Q(x):=\{z\in \C\, :\, |\text{Re}\,z-x|<1,\ |\text{Im}\,z|<\pi\}\subseteq S_+.
$$
Later we will modify the function $g$ on some squares centered along the imaginary axis, but it will be important that this modification takes place outside the horizontal band $S_+$ where the function will equal $g(z)=2\cosh z$. In the next lemma we collect a few basic properties of the function $g$.

\begin{lem}
\label{lem:cosh} 
Let $g(z):=2\cosh z$ and, for $R>0$, define 
$$
\mathcal{E}_R:=\mathcal{E}(2\cosh R,2\sinh R)=\left\{z\in\C\, :\, \frac{(\textup{Re}\,z)^2}{(2\cosh R)^2}+\frac{(\textup{Im}\,z)^2}{(2\sinh R)^2}< 1\right\}.
$$ 
\begin{enumerate}
\item[(i)] The critical points of $g$ are $\pi i \mathbb Z$, and the critical values of $g$ are $\{-2,2\}$.
\item[(ii)] The restriction $g\vert _{S_+} :S_+\to \C\setminus (-\infty,-2]$ is a conformal isomorphism.
\item[(iii)] For $x>1$, the function $g$ maps the rectangle $Q(x)$ conformally to the region $\mathcal{E}_{x+1}\setminus (\overline{\mathcal{E}}_{x-1}\cup \mathbb R_-)$.
\item[(iv)] Let $x_*:=\tfrac{5}{3}$. If $x\geqslant x_*$, then $g(Q(x))\supseteq \{z\in\C\ :\ \tfrac{1}{2}e^x<|z|<2e^x\}\setminus \mathbb R_-$.
\end{enumerate}
\end{lem}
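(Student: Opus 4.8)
The plan is to use the factorisation $g(z)=e^{z}+e^{-z}=J(e^{z})$, where $J(w):=w+\tfrac1w$ is the Joukowski map, together with the substitution $w=e^{z}$, as the common engine for all four parts. Two classical facts about $J$ do most of the work: it restricts to a conformal isomorphism $\{|w|>1\}\to\CC\setminus[-2,2]$ (injectivity, since $J(w_1)=J(w_2)$ forces $w_1=w_2$ or $w_1w_2=1$ and the latter is impossible for $|w_i|>1$; surjectivity onto the $\infty$-component from the boundary correspondence $\{|w|=1\}\to[-2,2]$); and, writing $w=re^{i\theta}$, the identity $J(re^{i\theta})=(r+r^{-1})\cos\theta+i(r-r^{-1})\sin\theta$ shows that $J$ maps the circle $\{|w|=e^{R}\}$ onto the ellipse $\bdry\E_{R}$ with semi-axes $2\cosh R$ and $2\sinh R$.

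Part~(i) is immediate from $g'(z)=2\sinh z$, whose zero set is $\pi i\Z$, together with $g(\pi i n)=2\cos(\pi n)=2(-1)^{n}$. For part~(ii), I would observe that $z\mapsto e^{z}$ maps $S_+$ conformally onto $\{|w|>1\}$ with the negative real axis removed (from $\Re z>0$ and $|\Im z|<\pi$), compose with $J$, and read off the image by tracking $g$ on the three boundary pieces of $S_+$: on the segment $[-i\pi,i\pi]$ it acts as $2\cos(\cdot)$, and on each half-line $\{x\pm i\pi:x\ge0\}$ as $-2\cosh(\cdot)$. Injectivity on $S_+$ can also be seen directly from $\cosh z_1=\cosh z_2\iff z_1\equiv\pm z_2\pmod{2\pi i}$, which in $S_+$ forces $z_1=z_2$. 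For part~(iii), the same substitution sends the rectangle $Q(x)$ conformally onto the annulus $\{e^{x-1}<|w|<e^{x+1}\}$ with its negative-real part removed; by the ellipse computation above, $J$ then maps the full annulus onto $\E_{x+1}\setminus\overline{\E}_{x-1}$ and the removed segment onto $\R_-\cap(\E_{x+1}\setminus\overline{\E}_{x-1})$ (each $\E_R$ meets $\R_-$ in $(-2\cosh R,0)$), which is exactly the asserted region.

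Part~(iv) then follows from~(iii): the annulus $\{\tfrac12 e^{x}<|z|<2e^{x}\}$ lies in $\E_{x+1}\setminus\overline{\E}_{x-1}$ provided $\D(0,2e^{x})\subseteq\E_{x+1}$ and $\overline{\E}_{x-1}\subseteq\overline{\D}(0,\tfrac12 e^{x})$. Comparing with the minor semi-axis of $\E_{x+1}$, the first reduces to $2\sinh(x+1)\ge 2e^{x}$, i.e.\ $(e-2)e^{2x+1}\ge1$, which holds for every $x\ge0$; comparing with the major semi-axis of $\E_{x-1}$, the second reduces to $2\cosh(x-1)\le\tfrac12 e^{x}$, i.e.\ $e^{2x}\ge\tfrac{2e^{2}}{e-2}$, i.e.\ $x\ge\tfrac12\log\tfrac{2e^{2}}{e-2}\approx 1.51$, and $x_*=\tfrac53$ clears this bound. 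I do not expect a serious obstacle here: everything comes down to elementary estimates for $\cosh$ and $\sinh$, and the only points that need a little care are the boundary bookkeeping that pins down the precise slits in~(ii)--(iii) and the verification that $x_*=\tfrac53$ is large enough in the inner-ellipse inequality of~(iv), which is the binding constraint.
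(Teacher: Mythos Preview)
Your approach is essentially identical to the paper's: both derive (i)--(iii) from the Joukowski factorisation $g=J\circ\exp$ and handle (iv) by comparing the semi-axes of $\E_{x\pm1}$ with $\tfrac12e^x$ and $2e^x$, arriving at the same binding constraint $x\ge\tfrac12(2+\log 2-\log(e-2))$. As a side remark, your own boundary analysis for (ii) in fact yields the image $\C\setminus(-\infty,2]$, which matches the domain the paper uses for $(g|_{S_+})^{-1}$ immediately after the lemma; the ``$-2$'' in the statement of (ii) is a typo.
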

\begin{proof}
The statements (i) to (iii) follow immediately from the fact that the map $g$ is the composition of the exponential function and twice the Joukowsky transform $\zeta\mapsto \zeta+\zeta^{-1}$. For (iv), since $\cosh x>\sinh x$ for all $x>0$, we have to check that, for $x>x_*$,
$$
2e^x<2\sinh(x+1)=e^{x+1}-e^{-(x+1)}\quad\text{and}\quad \tfrac{1}{2}e^x>2\cosh(x-1)=e^{x-1}+e^{-(x-1)}
$$
(see Figure~\ref{fig:ellipses}). Thus, it suffices that
$$
x_*>\tfrac{1}{2}\bigl(-\log(e-2)+\log(2)+2\bigr)
$$
to obtain the desired result.
\end{proof}

\begin{figure}[h!]
%\vspace{5pt}
\def\svgwidth{.9\linewidth}
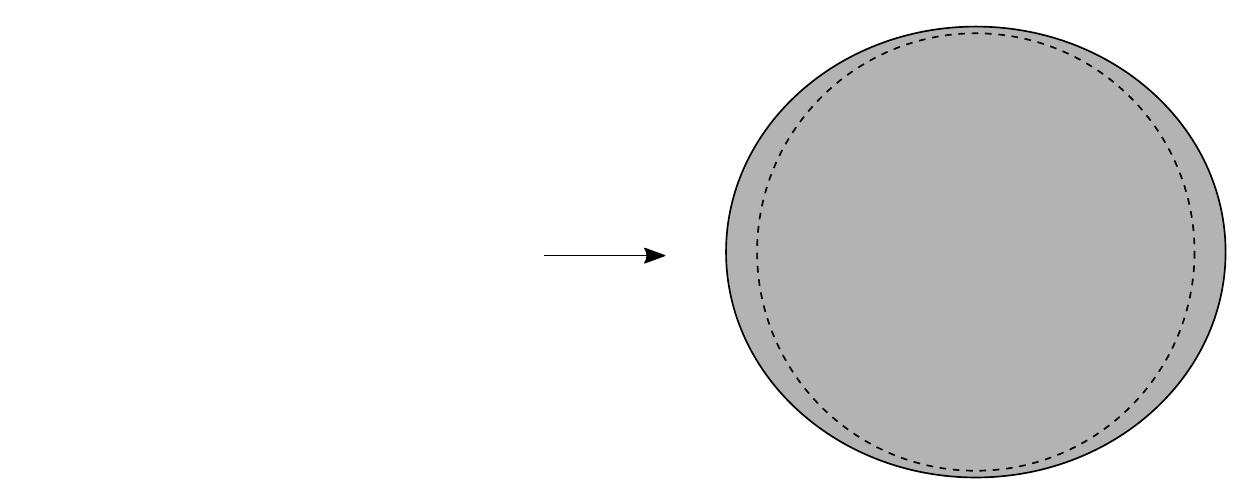
%\vspace{5pt}
\caption{The base map $g(z)=2\cosh z$ and the set $g(Q(x))$ from Lemma~\ref{lem:cosh} for $x>x_*$.}% Note that this is only a sketch as for such values of $x$ it would be difficult to distinguish the ellipses from circles.}
\label{fig:ellipses}
\end{figure}

%\begin{figure}[h]
%\vspace{5pt}
%\def\svgwidth{.5\linewidth}
%\input{ellipses.pdf_tex}
%\vspace{5pt}
%\caption{The set $g(Q(x))$ from Lemma~\ref{lem:ellipses} for $x>x_*$. Note that this is only a sketch as for such values of $x$ it would be difficult to distinguish the ellipses from circles.}
%\label{fig:ellipses}
%\end{figure}

From this point, to simplify the notation, whenever we write $g^{-1}$, we refer to the inverse branch $(g\vert _{S_+})^{-1} :\C\setminus (-\infty,2]\to S_+$.

\begin{dfn}[Reference orbit $(x_n)_n$, critical orbit $(v_n)_n$]
\!Let $x_0\!:=\!\tfrac{1}{2}$ and define \mbox{$x_n\!:=\!g(x_{n-1})$} for all $n\in\N$; we call $(x_n)_n$ the \textit{reference orbit}. We also consider the orbit of the critical point $v_0=0$ and define $v_n=g(v_{n-1})$ for all $n\in \N$; we refer to $(v_n)_n$ as the \textit{critical orbit}, although $g$ has many other critical points.
\end{dfn}

In the next lemma we show that indeed the orbits $(x_n)_n$ and $(v_n)_n$ escape and the distance between their points grows exponentially fast.

\begin{lem}
\label{lem:escaping-real-orbits}%\margin{$\phi(\R)\subseteq \R$ if $\text{supp}\,\mu_g$ is symmetric\\ wrt $\R$?}
Let $g(z)=2\cosh z$. Then $\R\subseteq I(g)\subseteq J(g)$ and so do all the $2\pi i\Z$ translates of $\R$. The reference orbit $(x_n)_n$ is a strictly increasing sequence and we have $v_n<x_n<v_{n+1}$ with \blue{$v_{n+1}-x_n>1$ and $x_n-v_n>1$} for all $n\geqslant 3$. 
\end{lem}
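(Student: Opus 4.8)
The plan is to prove Lemma~\ref{lem:escaping-real-orbits} by treating the three assertions separately, in order of increasing delicacy: first the escaping/Julia set statement for $\R$, then the monotonicity and escape of $(x_n)_n$, and finally the interleaving inequalities $v_n<x_n<v_{n+1}$ with the quantitative gaps.

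\textbf{Step 1: $\R \subseteq I(g) \subseteq J(g)$ and its translates.} For $x \geqslant 0$ we have $g(x) = 2\cosh x \geqslant 2 > x$ once $x$ is not too large, and more importantly $g(x) = e^x + e^{-x} > e^x$, so for $x \geqslant 1$ one gets $g(x) > e^x \geqslant e \cdot x > 2x$; iterating, $g^n(x) \to +\infty$. Since $g$ is even, $g(\R) = g(\R_{\geqslant 0}) \subseteq [2,\infty)$, so after one step every real point lands in $[2,\infty)$ and then escapes; hence $\R \subseteq I(g)$. Because $g$ has period $2\pi i$, the same holds for every translate $\R + 2\pi i k$. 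For the inclusion $I(g) \subseteq J(g)$: $g \in \mathcal{B}$ (indeed $S(g) = \{-2,2\}$ is finite by Lemma~\ref{lem:cosh}(i)), and by the Eremenko--Lyubich theorem cited in the introduction, $I(f) \subseteq J(f)$ for every $f \in \mathcal B$; apply this to $g$. Alternatively one can note $\R$ is forward invariant under $g^2$ and $g^2$ is real-analytic and expanding there in the sense above, so no point of $\R$ can lie in a Fatou component — but invoking Eremenko--Lyubich is cleanest.

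\textbf{Step 2: $(x_n)_n$ is strictly increasing and escapes.} Here $x_0 = \tfrac12$, and $x_1 = 2\cosh\tfrac12 = e^{1/2} + e^{-1/2} \approx 2.255 > x_0$. For $x \geqslant x_1 > 1$ the estimate $g(x) > 2x > x$ from Step~1 shows the sequence is strictly increasing and unbounded; monotonicity from $x_0$ to $x_1$ is the one explicit numerical check. So $(x_n)_n \subseteq I(g)$, consistent with Step~1.

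\textbf{Step 3: the interleaving $v_n < x_n < v_{n+1}$ and the gaps, for $n \geqslant 3$.} The critical orbit starts $v_0 = 0$, $v_1 = g(0) = 2$, $v_2 = g(2) = 2\cosh 2 \approx 7.524$, $v_3 = g(v_2) \approx 2\cosh(7.524)$, which is very large. The reference orbit: $x_0 = 0.5$, $x_1 \approx 2.255$, $x_2 = 2\cosh(2.255) \approx 9.64$, and $x_3 = 2\cosh(x_2)$. Since $g$ is increasing on $[0,\infty)$, the ordering $v_n < x_n$ will propagate once established, and similarly $x_n < v_{n+1} = g(v_n)$ reduces (applying the increasing map $g$) to $x_{n-1} < v_n$, i.e.\ to the previous interleaving instance; so everything bootstraps from checking $v_3 < x_3 < v_4$ and $v_2 < x_2$, a finite computation. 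For the quantitative gaps $x_n - v_n > 1$ and $v_{n+1} - x_n > 1$: write $a = v_{n-1}$, $b = x_{n-1}$ with $v_n - x_{n-1}$ and $x_{n-1} - v_{n-1}$ already $> 1$ (inductively, the base case $n=3$ or $n=4$ being explicit). Then $x_n - v_n = g(b) - g(a) \geqslant \int_a^b g'(t)\,dt \geqslant g'(a)(b - a) = 2\sinh(a)\cdot(b-a)$; for $a = v_{n-1}$ with $n \geqslant 3$ we have $a = v_2 \approx 7.5$ or larger, so $2\sinh a$ is enormous and $(b-a) \geqslant 1$ isn't even needed — crude monotonicity suffices. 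Symmetrically, $v_{n+1} - x_n = g(v_n) - g(x_{n-1})$, and by the mean value theorem this is $\geqslant 2\sinh(x_{n-1})\cdot(v_n - x_{n-1}) \gg 1$ for $n \geqslant 3$. The one subtlety is pinning down the correct base index where the interleaving first holds, which is why the statement restricts to $n \geqslant 3$: one should verify directly that $v_3 < x_3$ (equivalently $v_2 < x_2$, i.e.\ $2\cosh 2 < 2\cosh(2\cosh\tfrac12)$, which holds since $2 < 2\cosh\tfrac12$) and that the gaps exceed $1$ starting there.

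\textbf{Main obstacle.} There is no deep obstacle; the proof is essentially a monotonicity argument for the increasing map $g|_{[0,\infty)}$ combined with the super-exponential growth $g(x) > e^x$. The only care needed is organizing the induction so that the interleaving and the gap estimates feed each other correctly, and nailing down the explicit base case at $n = 3$ (the reason the smaller indices are excluded is simply that $v_0 = 0 < x_0 = \tfrac12$ but $v_1 = 2 < x_1 \approx 2.26$ while $x_1 < v_2 \approx 7.5$ already holds — so in fact the pattern starts early, but the clean uniform gap bound $>1$ is only claimed from $n=3$ on, and one just checks that threshold numerically).
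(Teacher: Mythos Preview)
Your proposal is correct and follows essentially the same approach as the paper's proof: invoke Eremenko--Lyubich for $I(g)\subseteq J(g)$, use an elementary growth estimate to show real points escape, propagate the interleaving $v_n<x_n<v_{n+1}$ from $v_0<x_0<v_1$ by monotonicity of $g$ on $[0,\infty)$, and amplify the gaps via a derivative bound. The paper's write-up is slightly tidier---it notes $g'(x)>2$ for $x>1$ and starts from the explicit check $x_1-v_1>\tfrac14$, $v_2-x_1>\tfrac14$, so two doublings give gaps exceeding $1$ at $n=3$---whereas your Step~3 invokes larger derivative values and is a bit less cleanly organized as an induction, but the substance is the same.
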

\begin{proof}
Since $g$ has only two critical values at $\pm 2$ and no asymptotic value, we have $g\in\mathcal B$ and $I(g)\subseteq J(g)$ by \cite[Theorem~1]{eremenko-lyubich92}. We show that there exists $A>0$ such that $g(x)-x>A$ for all $x\geqslant 1$, and this implies that $g^n(x)>x+An\to +\infty$ as $n\to\infty$. Since $g'(x)-1=2\sinh x-1>2x-1>0$ for all $x>\tfrac{1}{2}$, the previous statement holds taking $A:=g(1)-1>1$. For $x<1$, observe that $g(x)>2$, and then the previous case applies. So $\R\subseteq I(g)$.

Since $v_0<x_0<v_1$ and $g$ is a strictly increasing function, we have $v_n<x_n<v_{n+1}$ for all $n\in\N$. It is easy to check that $v_2-x_1>x_1-v_1>\tfrac{1}{4}$ and $g'(x)>2$ for all $x>1$, so \blue{$v_{n+1}-x_n > 1$ and $x_n-v_n>1$} for all $n\geqslant 3$.
\end{proof}

Next we define some quantities and sets associated to the reference orbit. 

\begin{dfn}[Collections of sets $\{D_n\}_n$, $\{E_n\}_n$ and $\{Q_n\}_n$]
\label{dfn:hn-dn-Rn-En-Dn-Qn}
For $n\in\N$, let
\begin{equation}
\label{eq:dn-Rn-hn}
d_n:=\left\lfloor \frac{x_{n+1}}{x_n}\right\rfloor,\quad R_n:=\left(d_n-\tfrac{1}{3}\right)\pi,\quad h_n:=2\pi\left\lfloor\frac{x_{n+1}+\pi}{2\pi}\right\rfloor.
\end{equation}
For $n\geqslant 3$, we define $Q_n:=Q(x_n)\subseteq S_+$, 
$$
E_{\pm n}:=\{z\in\C\ :\ |\textup{Re}\,z|<2d_n\pi,\quad |\textup{Im}\,z\mp h_n|<2d_n\pi\}\subseteq \C\setminus S_+,
$$
and $D_{\pm n}:=\D(\pm ih_n,R_n)\subseteq E_{\pm n}$ (see Figure~\ref{fig:setup}).
\end{dfn}

In the following lemma we will provide some further growth estimates about the sequences that we have just introduced. 

\begin{lem}
\label{lem:growth-estimates}
The reference orbit $(x_n)_n$ satisfies that %$\log x_{n+1}=x_n+o(1)$ as $n\to \infty$, and 
\red{$x_{n+1}>(n!)^2x_n$ for all $n\in\N$}. Thus,
\begin{equation}
\label{eq:growth-estimate-inequalities}
\red{\frac{R_n}{h_n}<\frac{2\pi}{(n!)^2}\quad \text{ and } \quad \frac{h_{n+1}}{h_n}>(n!)^2,\quad \text{ for all } n\geqslant 3.}
\end{equation}

\pagebreak

\noindent In particular,
\begin{equation}
\label{eq:growth-estimate-infinitely-many}
\red{h_n+3R_n+6nR_n<\tfrac{6}{(n!)^2}(h_{n+1}-3R_{n+1}),\quad \text{ for all } n\geqslant 3,}
\end{equation}
and
\begin{equation}
\label{eq:growth-estimate-finitesum}
\sum_{n=1}^\infty \frac{R_n}{h_n}<+\infty.
\end{equation}
%$h_{n+1}-h_n>2\pi(d_n+d_{n+1})$ for all $n\geqslant 1$,\vspace{5pt}
\end{lem}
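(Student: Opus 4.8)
The plan is to reduce the whole lemma to the single multiplicative estimate $x_{n+1}>(n!)^2x_n$, and then read off the three displays \eqref{eq:growth-estimate-inequalities}, \eqref{eq:growth-estimate-infinitely-many} and \eqref{eq:growth-estimate-finitesum} by substituting this into the definitions \eqref{eq:dn-Rn-hn}. The only structural observation needed is that the floor functions in \eqref{eq:dn-Rn-hn} perturb each quantity by at most an additive $O(1)$, which is completely swamped by the extremely fast growth of $(x_n)_n$.

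For the basic estimate I would use one elementary fact about $\cosh$, namely $2\cosh t>t^2$ for every $t\in\R$ (immediate from the power series $2\cosh t=2+t^2+\tfrac{t^4}{12}+\cdots$), so that
\[
x_{n+1}=2\cosh x_n>x_n^2\qquad\text{for all }n\ge 0 .
\]
From this I would first establish, by induction, that $x_n\ge (n!)^2$ for all $n\ge 1$. The cases $n=1,2,3$ are checked by a direct numerical estimate of $x_1=2\cosh\tfrac12\approx 2.26$, $x_2=2\cosh x_1\approx 9.64$ and $x_3=2\cosh x_2>10^4$ against $1$, $4$ and $36$; for $n\ge 3$ the inductive step is
\[
x_{n+1}>x_n^2\ge (n!)^4=(n!)^2\cdot(n!)^2\ge (n!)^2(n+1)^2=\bigl((n+1)!\bigr)^2 ,
\]
where I used $n!\ge n+1$, which is valid for $n\ge 3$. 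The claimed estimate then follows for every $n\ge1$: $x_{n+1}>x_n^2=x_n\cdot x_n\ge (n!)^2x_n$. I also record for later use the stronger consequence that $(x_n)_n$ grows at least doubly exponentially, in the form $x_{n+1}>x_n^2$.

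For \eqref{eq:growth-estimate-inequalities} I would combine $d_n=\lfloor x_{n+1}/x_n\rfloor\le x_{n+1}/x_n$ with
\[
x_{n+1}-\pi<h_n=2\pi\Bigl\lfloor\tfrac{x_{n+1}+\pi}{2\pi}\Bigr\rfloor\le x_{n+1}+\pi ,
\]
and the trivial bound $x_{n+1}>2\pi$ (valid for all $n\ge 1$, since $x_2>2\pi$), which together give $\tfrac12 x_{n+1}<h_n<2x_{n+1}$. Hence $R_n=(d_n-\tfrac13)\pi<\pi x_{n+1}/x_n$ yields $R_n/h_n<2\pi/x_n\le 2\pi/(n!)^2$, and $h_{n+1}/h_n>(x_{n+2}-\pi)/(x_{n+1}+\pi)>x_{n+2}/(4x_{n+1})>\bigl((n+1)!\bigr)^2/4\ge (n!)^2$. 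For \eqref{eq:growth-estimate-infinitely-many} I would substitute these bounds into both sides for $n\ge 3$: on the left, using $R_n<\tfrac{2\pi}{(n!)^2}h_n$ and the fact that $(6n+3)\tfrac{2\pi}{(n!)^2}$ is decreasing in $n$ with value $\tfrac{7\pi}{6}<4$ at $n=3$,
\[
h_n+3R_n+6nR_n=h_n+(6n+3)R_n<\Bigl(1+(6n+3)\tfrac{2\pi}{(n!)^2}\Bigr)h_n<5h_n ;
\]
on the right, $R_{n+1}/h_{n+1}<2\pi/\bigl((n+1)!\bigr)^2<\tfrac{1}{60}$ for $n\ge 3$ gives $h_{n+1}-3R_{n+1}>\tfrac{9}{10}h_{n+1}$, hence $\tfrac{6}{(n!)^2}\bigl(h_{n+1}-3R_{n+1}\bigr)>\tfrac{5}{(n!)^2}h_{n+1}>5h_n$; comparing the two displays gives the inequality. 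Finally \eqref{eq:growth-estimate-finitesum} is immediate from $R_n/h_n<2\pi/x_n$ together with $\sum_n 1/x_n\le\sum_n 1/(n!)^2<\infty$.

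Everything here is routine estimation; the only points that need care are the honest verification of the three small-index cases of $x_n\ge (n!)^2$, and the bookkeeping of the numerical constants in \eqref{eq:growth-estimate-infinitely-many} (the coefficient $6$ and the extra summand $6nR_n$ are calibrated precisely so that the estimate still goes through, with a little room to spare). There is no conceptual obstacle: once one notes that $x_{n+1}>x_n^2$, every quantity built from the reference orbit is trivially under control.
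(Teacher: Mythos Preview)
Your proof is correct and follows essentially the same route as the paper's: establish $x_n\ge(n!)^2$ by direct check for small $n$ and the squaring estimate $x_{n+1}>x_n^2$ for the inductive step, then read off \eqref{eq:growth-estimate-inequalities}--\eqref{eq:growth-estimate-finitesum} from the defining formulas \eqref{eq:dn-Rn-hn} using $x_{n+1}-\pi<h_n\le x_{n+1}+\pi$. The only cosmetic differences are that the paper obtains $x_{n+1}>x_n^2$ via $2\cosh t>e^t>t^2$ rather than directly from the power series, and it verifies \eqref{eq:growth-estimate-infinitely-many} by bounding the ratio $(h_n+3(1+2n)R_n)/(h_{n+1}-3R_{n+1})$ in one line rather than sandwiching both sides with $5h_n$.
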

\begin{proof}
%Observe that 
%$$
%\log x_{n+1}=\log (e^{x_n}+e^{-x_n})=x_n+\log(1+e^{-2x_n})=x_n+o(1)\quad \text{ as } n\to \infty,
%$$
%and using that $x_n>n^2$ for $n\geqslant 3$, we obtain that
%$$
%\frac{x_{n+1}}{x_n}>\frac{e^{x_n}}{x_n}>\frac{e^{n^2}}{n^2}>(n+1)^2,\quad \text{ for all }n\geqslant 3.
%$$
\red{First, observe that $x_n>(n!)^2$ for all $n\in\N$. Indeed, it is easy to check that this holds for $n\leqslant 3$ directly. For $n\geqslant 4$, by induction, we have
$$
x_{n}>e^{x_{n-1}}>x_{n-1}^2>((n-1)!)^4>(n!)^2,
$$
and our claim follows. Thus,
$$
\frac{x_{n+1}}{x_n}>\frac{e^{x_n}}{x_n}>x_n>(n!)^2,\quad \text{ for all } n\in\N.
$$
One can also show easily that}
$$
\frac{R_n}{h_n}\leqslant \frac{\left(\frac{x_{n+1}}{x_n}-\frac{1}{3}\right)\pi}{x_{n+1}-\pi}< \frac{\pi x_{n+1}}{x_n(x_{n+1}-\pi)}<\frac{2\pi}{x_n}<\red{\frac{2\pi}{(n!)^2}},\quad \text{ for all } n\geqslant 3,
$$
and 
$$
\frac{h_{n+1}}{h_n}\geqslant \frac{x_{n+2}-\pi}{x_{n+1}+\pi}>\red{\frac{((n+1)!)^2x_{n+1}-\pi}{x_{n+1}+\pi}>(n!)^2},\quad \text{ for all } n\geqslant 3.
$$
Finally, it follows from these two inequalities that
$$
\red{\frac{h_n+3(1+2n)R_n}{h_{n+1}-3R_{n+1}}<\frac{h_n\left(1+\frac{6\pi(1+2n)}{(n!)^2}\right)}{h_{n+1}\left(1-\frac{3}{(n!)^2}\right)}<\frac{1}{(n!)^2}\frac{12}{11}\left(1+\frac{7\pi}{6}\right)<\frac{6}{(n!)^2}, \text{ for } n\geqslant 3,}
$$
and the series in \eqref{eq:growth-estimate-finitesum} is finite.
%
%To prove the first inequality, we will show that
%$$ 
%h_{n+1}-h_n\geqslant x_{n+2}-x_{n+1}-2\pi > 2\pi \left(\frac{x_{n+1}}{x_n}+\frac{x_{n+2}}{x_{n+1}}\right) \geqslant 2\pi(d_n+d_{n+1}),\quad \text{for all } n\in\N.
%$$
%The first and last inequalities are straightforward, and the middle inequality can be rewritten as
%$$
%x_{n+2}>2\pi x_{n+1}^2>\frac{2\pi x_{n+1}^2}{x_{n+1}-2\pi}\left(\frac{1}{x_n}+\frac{1}{2\pi}+\frac{1}{x_{n+1}}\right),\quad \text{ for all } n\in\N,
%$$
%and follows from the fact that
%$$
%x_{n+1}=2\cosh x_n=e^{x_n}(1+e^{-2x_n})> e^{x_n}, \quad \text{for all } n\in\N.
%$$
%To prove the first inequality, it is easy to check that
%$$ 
%3R_n<3\pi d_n\leqslant 3\pi\frac{x_{n+1}}{x_n}\leqslant\tfrac{1}{2}(x_{n+1}-\pi)\leqslant \tfrac{1}{2}h_n,\quad \text{for all } n\geqslant 3.
%$$
%Then, since $h_{n-1}<h_n/6$ for $n\geqslant 3$, we have
%$$
%\frac{h_n-(h_{n-1}+3R_n+3R_{n-1})}{h_n+3R_n}\geqslant \frac{\tfrac{1}{2}h_n-\tfrac{3}{2}h_{n-1}}{\tfrac{3}{2}h_n}
%$$
\end{proof}

Observe that inequality \eqref{eq:growth-estimate-infinitely-many} in Lemma~\ref{lem:growth-estimates} implies that $h_{n+1}-2d_{n+1}\pi> h_n+2d_n\pi$ for all $n\geqslant 3$ and thus, the squares $\{E_n\}_{n}$ are disjoint. The next lemma concerns the first $M$ iterates of the map $g$.

\begin{lem} 
\label{lem:initial-segment} 
For $n \geqslant 1$, let $g^{-n}$ be the $n$-th iterate of $g^{-1} = (g|_{S_+})^{-1}$, which is a univalent function that maps $\C \sminus (-\infty, v_n]$ into 
$S_+$. For $M\geqslant 3$, the function $g^{-M}$  is defined on~$\overline{Q}_M$ and $g^{-M}(\overline{Q}_M)\subseteq \D(\tfrac{1}{2}, \tfrac{1}{16})$.  
\end{lem}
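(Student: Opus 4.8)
The plan is to first pin down the geometry of $g^{-1}$ near the point $x_1 = g(\tfrac12) = 2\cosh\tfrac12$, and then iterate the contraction. The key observation is that $Q_M = Q(x_M)$ is a fixed-size rectangle (height $2\pi$, width $2$) around the very large real point $x_M$, and that $g^{-1}$ is strongly contracting on such a region because $g$ is expanding there. Concretely, I would argue by downward induction on $k$ from $M$ to $0$, tracking a nested sequence of discs: set $P_M := \overline{Q}_M$, and show that $g^{-1}(P_{k+1})$ is contained in a disc $P_k$ centered at $x_k$ of rapidly shrinking radius, with $P_k \subseteq \C\sminus(-\infty, v_k]$ so that the branch $g^{-1}$ is defined on it (this last inclusion uses Lemma~\ref{lem:escaping-real-orbits}, namely $x_k - v_k > 1$ for $k \geqslant 3$, and for the initial few indices a direct check).

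The first step is the base of the induction: $g^{-1}$ is univalent on $\C\sminus(-\infty,v_M]$, and $\overline{Q}_M$ sits at distance $\gg 1$ to the right of $v_M$, so by the Koebe distortion theorem (or just a direct estimate using $g'(g^{-1}(w)) = 2\sinh(g^{-1}(w))$, which is enormous when $\Re g^{-1}(w)$ is of size $\log x_M$), the image $g^{-1}(\overline{Q}_M)$ is contained in a tiny disc around $g^{-1}(x_M) = x_{M-1}$ — say of radius $\tfrac{1}{100 x_{M-1}}$ or something comparably small, using the exponential growth $x_M > e^{x_{M-1}}$ from Lemma~\ref{lem:growth-estimates}. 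The inductive step then propagates this: if $P_{k+1} = \overline{\D}(x_{k+1}, r_{k+1})$ with $r_{k+1}$ small, then since $|g'| = 2|\sinh|$ is bounded below by roughly $e^{\Re z} \approx x_k$ on $g^{-1}(P_{k+1})$ (a neighbourhood of $x_k = g^{-1}(x_{k+1})$), we get $r_k \lesssim r_{k+1}/x_k$, so the radii decrease superexponentially. Summing up, $g^{-M}(\overline{Q}_M) = P_0$ is a disc centered at $x_0 = \tfrac12$ with radius bounded by a convergent tail that is easily $< \tfrac{1}{16}$; one just needs the crude bound that $\sum_{k\geqslant 3} 1/x_k$ plus the contributions from $k = 1,2$ stays under the threshold, which follows from $x_k > (k!)^2$.

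I expect the main obstacle to be purely bookkeeping: one must choose the auxiliary radii $r_k$ so that at every stage the disc $P_k$ stays (a) inside the domain $\C\sminus(-\infty, v_k]$ of the relevant inverse branch and (b) inside the region where the lower bound on $|g'|$ that one wants to use is actually valid, and simultaneously (c) small enough that the Koebe-type distortion constant is close to $1$. For the first few indices $k = 1, 2, 3$ the "large $x_k$" asymptotics are not yet overwhelming, so these need to be handled by an explicit numerical estimate — this is where the specific constant $\tfrac{1}{16}$ (rather than something tighter) and the choice $x_0 = \tfrac12$ matter, and it is essentially the only genuinely delicate point. Everything for $k \geqslant 4$ is automatic from the superexponential growth. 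Once $P_0 \subseteq \overline{\D}(\tfrac12, \tfrac{1}{16})$ is established, the lemma follows since $g^{-M}(\overline{Q}_M) \subseteq P_0$, and the statement that $g^{-M}$ is defined on all of $\overline{Q}_M$ is exactly the content of the nested-domain part of the induction.
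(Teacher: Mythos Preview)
Your approach is correct but proceeds quite differently from the paper's. You propose to iterate $g^{-1}$ step by step, tracking a shrinking nest of discs $P_k$ around $x_k$ and controlling the contraction via the explicit lower bound $|g'(z)| \approx e^{\Re z}$ at each stage; this requires separate numerical handling of the first few indices. The paper instead treats $\psi = g^{-M}$ as a single univalent map on the large disc $\D(x_M, R)$ with $R = 10^4 \le x_M - v_M$, and uses the Koebe~$\tfrac14$--Theorem together with the fact that $0 \notin \psi(\D(x_M,R))$ to bound $|\psi'(x_M)| \le 2/R$ directly, then a single application of Koebe distortion on $\overline{Q}_M \subset \D(x_M, R/32)$ gives the radius bound $\tfrac{1}{16}$ in one stroke. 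The paper's route is shorter and sidesteps all the per-step bookkeeping (in particular the delicate small-$k$ estimates you anticipated); your route is more elementary in that it avoids the clever use of Koebe~$\tfrac14$ to bound the derivative via an omitted value, at the cost of more computation.
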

\begin{proof} 
Since $g:[0, \infty) \to [v_1, \infty)$ is homeomorphic and monotone, the function
$g^{-1}:\C\setminus (-\infty,2]\to S_+$ maps $\C \setminus (-\infty, v_j]$ onto $S_+ \setminus (-\infty, v_{j-1}]$ for $j \ge 2$ 
and onto $S_+$ for $j=1$. Therefore the function $g^{-n}:\C \setminus (-\infty, v_n] \to S_+$ is defined and univalent for all $n\in\N$. 

Let $M\geqslant 3$, and define
$$
R:=10^4\leqslant x_3-v_3\leqslant x_M-v_M.
$$
By the above, $\psi = g^{-M}$ is defined and univalent on $\D(x_M, R)$, with 
$g^{-M}(x_M)=x_0$ 
and its image does not contain the critical point $0$.  
By Koebe 1/4-Theorem \cite[Corollary~1.4]{pommerenke75}, we have 
$$
\frac{1}{4} |\psi'(x_M)| R \le |0-x_0| = \frac{1}{2}.
$$  
Hence, it follows from the Koebe Distortion Theorem~\cite[Theorem~1.6]{pommerenke75} that if $0< R'< R$, then
$\psi(\D(x_M, R')) \subset \D(x_0, R'')$, 
where 
$$
R'' = |\psi'(x_M)| \frac{R'}{1- (R'/R)^2}.
$$
Now taking $R'=R/32>\sqrt{1+\pi}$, we obtain that 
$$
R''\leqslant \frac{2R'/R}{1- (R'/R)^2}\leqslant \frac{1}{16}
$$ 
and thus $g^{-M}(\overline{Q}_M) \subset \D(\frac{1}{2}, \frac{1}{16})$.  
%Note that for $M \ge 1$, the set of critical values of $g^M$ is 
%$\{v_1, \dots, v_M\} \cup \{-1\}$.   
%Suppose $\D(x_M, R) \subset \C \sminus \{v_1, \dots, v_M\} \cup \{-1\}$.  
%Then there exists and inverse branch $\psi$ 
%of $g^M$ defined in $\D(x_M, R)$ such that $g \circ \psi = id$, $\psi$ is holomorphic and univalent and 
%$\psi(x_M)=x_0$.  As $g^{-1}$ is the inverse of $g|_{S_+}: S_+ \to \C \sminus (-\infty,2]$, 
%$\psi$ must coincide with $g^{-M}$.  
%Its image does not contain the critical point $0$.  
%By Koebe 1/4-theorem \cite{}, we have 
%$\frac{1}{4} |\psi(x_M)| R \le |0-x_0| = \frac{1}{2}$.  
%Hence by Koebe distortion theorem, 
%$\psi(\D(x_M, R_1)) \subset \D(x_0, R_2)$, 
%where $R_2 = |\psi(x_M)| R_1/(1- (R_1/R)^2)$.
%Now with $M=3$, we have $x_M=..$, $v_M=..$ and we can take $R=..$ and $R_1=..$.  
%This implies that $R_2<$ and $g^{-M}(\overline{Q}_M) \subset \D(x_0, ..)$.  
\end{proof}

\section{Quasiregular interpolation and definition of the function $f_{\mathbf{w}}$}

\label{sec:qr-interpolation}
\label{sec:interpolation}

In this section we state the results about how to modify the map \mbox{$g(z)=2\cosh z$} to obtain an entire function $f_{\mathbf{w}}\in\mathcal B$ such that, for an appropriate choice of the sequence $\mathbf{w}\in\D_{3/4}^{\N_N}$, has an oscillating wandering domain. In order to make clear the outline of the proof of the main theorem, we provide all the details in the Appendix~\ref{sec:a1}. 

We start with some basic definitions about quasiconformal mappings, see \cite{branner-fagella14} for a reference on quasiconformal mappings with an emphasis on their applications on holomorphic dynamics.

\begin{dfn}[Quasiconformal map]
Let $\phi:\mathbb{C}\to\mathbb{C}$ be a $\mathcal C^1$ homeomorphism that preserves orientation. We define the \textit{complex dilatation} (or the \textit{Beltrami coefficient}) of $\phi$ at a point $z$ by
$$
\mu_\phi(z):=\frac{\partial_{\overline{z}}\phi(z)}{\partial_{z}\phi(z)}\in\mathbb{D}
$$
and then, the \textit{dilatation} of $\phi$ at a point $z$ is given by
$$
K_\phi(z):=\frac{1+|\mu_\phi(z)|}{1-|\mu_\phi(z)|}.
$$
We say that $\phi$ is a $K$-\textit{quasiconformal} map, $1\leqslant K<+\infty$, if 
$$
K=K(\phi):=\mathop{\mbox{ess~sup}}_{z\in\C} K_\phi(z).
$$
A map $g:\C\to\C$ is $K$-\textit{quasiregular} if and only if $g$ can be expressed as
$$
g=f\circ \phi,
$$
where $\phi:\C\to \C$ is a $K$-quasiconformal map and $f:\phi(\C)\to\C$ is a holomorphic function.
\end{dfn}

Let $\supp \mu_\phi := \{z \in \C : \mu_\phi(z) \neq 0\}$. Observe that $\mu_\phi(z)=0$ (or, equivalently, $K_\phi(z)=1$) if and only if the map~$\phi$ is conformal at $z$, and hence a $K$-quasiconformal map~$\phi$ is conformal in the complement of $\mathop{\textup{supp}} \mu_\phi$. Geometrically, a conformal map sends infinitesimal circles to infinitesimal circles, while a $K$-quasiconformal map sends infinitesimal circles to infinitesimal ellipses with excentricity bounded by $K$. Note that $K$-quasiregular maps also satisfy this property but are not required to be global homeomorphisms. 

We will construct a $K$-quasiregular map that equals the base map $g(z)=2\cosh z$ in the complement of the squares $\{E_n\}_n$ and is a different power map inside each disc $D_n$, followed by a further quasiconformal map. In the doubly connected sets \mbox{$\{E_n\setminus D_n\}_n$} we will interpolate the two holomorphic functions defined on the boundaries $\partial E_n$ and $\partial D_n$ by quasiregular maps. In the following lemma we describe these quasiregular interpolating maps, which have a bounded dilatation that is independent of $d$ (see Figure~\ref{fig:interpolation-annulus}).

\begin{lem}[Cosh-power Interpolation Lemma]
\label{lem:interpolation}
\label{lem:key-interpolation}
Let $d\in\N$ and consider the square 
$$
E:=\{z\in\C\, :\, |\textup{Re}\,z|\leqslant 2d\pi,\ |\textup{Im}\,z|\leqslant 2d\pi\}.
$$
Define $R:=(d-\tfrac{1}{3})\pi$ and let $D:=\D_R$. There exists $K_1\geqslant 1$ independent~of~$d$ and a $K_1$-quasiregular map $G:E\to \overline{\mathcal{E}}_{2d\pi}$ with $\textup{supp}\,\mu_G\subseteq E\setminus D$ satisfying that $G(-z)=G(z)$, $G(\overline{z})=\overline{G(z)}$ and
\begin{equation}
G(z)=\left\{
\begin{array}{ll}
2\cosh z, & \textup{ if } z\in \partial E\cup ((E\cap i\mathbb{R})\setminus D),\vspace{5pt}\\
\left(\dfrac{z}{R}\right)^{2d}, & \textup{ if } z\in \overline{D}.
\end{array}\right.
\end{equation}
\end{lem}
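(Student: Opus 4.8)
The strategy is to build the map $G$ in three concentric zones and glue them with explicit quasiregular maps, arranging so that the dilatation bound is governed entirely by the geometry of fixed "model" annuli, with $d$ only appearing as an exponent (hence controlling \emph{angles}, never dilatations). The key observation is that both boundary prescriptions are rotationally periodic in a precise sense: on $\partial D$ the map $z\mapsto (z/R)^{2d}$ wraps the circle $2d$ times around $\partial\D$, while on $\partial E$, thinking of $\partial E$ as (roughly) a circle of radius $\sim 2d\pi$, the map $2\cosh z \sim e^z$ turns the vertical sides into curves that wrap many times around large circles; the integer $d_n$ in Definition~\ref{dfn:hn-dn-Rn-En-Dn-Qn} was chosen precisely as $d_n = \lfloor x_{n+1}/x_n\rfloor$ so that these winding numbers match up to bounded error.

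First I would fix a large constant, say work on the annulus $D(0,2) \setminus \overline{\D}$, and interpolate between $w\mapsto w^{2d}$ on the inner circle $\{|w|=1\}$ and the identity-type model $w \mapsto w^{2d}$ pushed out to the outer circle; concretely, a map of the form $w \mapsto |w|^{a(|w|)} e^{2d\,i\arg w}$ on a fixed annulus is quasiconformal with dilatation depending only on how fast $a$ varies, \emph{not} on $d$, since in logarithmic coordinates $\zeta = \log w$ this is an affine-type shear $\zeta \mapsto \operatorname{Re}\zeta \cdot(\text{stuff}) + 2d\, i\operatorname{Im}\zeta$ composed with a dilation, and such real-linear maps have bounded dilatation independent of the multiplier on the imaginary part as long as we also scale the real part — the point is that dividing out by the conformal power map $w\mapsto w^{2d}$ (which is a local homeomorphism away from $0$) reduces everything to interpolating between the identity and a fixed map on a fixed annulus. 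Second, I would handle the outer region $E \setminus (\text{large disc})$: here $G$ must equal $2\cosh z$ on $\partial E$, and I would use the conformal structure given by $2\cosh$ itself — since $2\cosh$ maps a neighbourhood of $\partial E$ (minus the critical points on $i\R$, which is why we cut along $E\cap i\R$) essentially conformally onto a neighbourhood of $\partial \mathcal E_{2d\pi}$, one interpolates on the target side between $\partial\mathcal E_{2d\pi}$ and a fixed circle, again a bounded-dilatation interpolation between two conformally-equivalent annuli of bounded modulus. Third, glue the two model maps in the middle annulus. The symmetries $G(-z)=G(z)$ and $G(\bar z)=\overline{G(z)}$ are arranged by performing all constructions symmetrically, i.e.\ defining $G$ on one quadrant-sector and extending by the reflections — one must check the boundary prescriptions are compatible with these symmetries, which they are since $2\cosh$ and $(z/R)^{2d}$ both have them ($2d$ even). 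Finally, $\operatorname{supp}\mu_G \subseteq E\setminus D$ holds because $G$ is literally $2\cosh z$ (holomorphic) on $\partial E \cup ((E\cap i\R)\setminus D)$ and literally a power map (holomorphic) on $\overline D$; the full statement that these prescriptions are realised by a \emph{single} quasiregular map with range exactly $\overline{\mathcal E}_{2d\pi}$ follows by checking the two model maps agree on the gluing curve (after one reparametrisation) and that the total winding numbers match, giving a well-defined degree-$2d$ branched cover $E \to \overline{\mathcal E}_{2d\pi}$.

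The main obstacle I expect is the \emph{uniformity of $K_1$ in $d$}: one has to be careful that neither the modulus of the intermediate annuli, nor the boundary parametrisations one interpolates between, nor the derivative estimates for $2\cosh$ near $\partial E$ degenerate as $d\to\infty$. This is exactly why $R = (d-\tfrac13)\pi$ and $E$ has half-side $2d\pi$ are chosen with matching linear growth in $d$: the ratio of the "outer radius" of $D$ to the "inner radius" of $E$ stays bounded away from $0$ and $1$, so all the annuli involved have moduli bounded above and below independently of $d$. Concretely I would make the change of variables $z = 2d\pi\, \zeta$ (or work in logarithmic coordinates on a fixed annulus), which renders the domain $d$-independent and turns the whole problem into interpolating a fixed finite family of boundary maps, the only $d$-dependent feature being the winding number $2d$, which affects only the argument and not the dilatation — the dilatation of $\zeta \mapsto \rho(|\zeta|)\, e^{i(2d\arg\zeta + \theta(|\zeta|))}$ depends only on $\rho,\theta$ and their derivatives. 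Once that reduction is in place, choosing $\rho,\theta$ to be fixed smooth monotone functions interpolating the prescribed boundary values gives an explicit $K_1$, completing the proof; I would defer these routine but somewhat lengthy estimates to the appendix as the paper indicates.
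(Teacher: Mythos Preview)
Your central dilatation claim is incorrect. For the map $\zeta \mapsto \rho(|\zeta|)e^{i(2d\arg\zeta + \theta(|\zeta|))}$ one computes in polar coordinates
\[
|\mu(re^{i\phi})| \;=\; \frac{\bigl|\,r\rho'(r)/\rho(r) - 2d + i\,r\theta'(r)\,\bigr|}{\bigl|\,r\rho'(r)/\rho(r) + 2d + i\,r\theta'(r)\,\bigr|},
\]
which tends to $1$ as $d\to\infty$ whenever $r\rho'/\rho$ and $r\theta'$ stay bounded. For bounded dilatation one needs $r\rho'(r)/\rho(r)$ comparable to $d$, i.e.\ $\rho(r)\sim r^{cd}$, so $\rho$ cannot be a \emph{fixed} function as you assert in your final sentence (your parenthetical ``as long as we also scale the real part'' is the correct instinct but contradicts your conclusion). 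Your ``divide out by $w\mapsto w^{2d}$'' idea would repair this if carried through, but a second and more structural gap remains: the lemma requires $G(iy)=2\cos y$ on the segments $R\le|y|\le 2d\pi$, which oscillates and forces $2d$ simple branch points of $G$ there. Together with the critical point of order $2d$ at the origin these account for the Riemann--Hurwitz count for a branched cover of degree $4d$ (not $2d$ as you write) from one disc to another. A purely radial--rotational interpolation on an annulus is locally injective and cannot realise this branching; your remark about cutting along $E\cap i\R$ does not explain how to produce it.

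The paper's construction is organised quite differently. Instead of interpolating radially across the whole annulus, it keeps $G=2\cosh z$ (conformal) on \emph{most} of $E\setminus D$ and confines the quasiregular interpolation to a strip of width~$1$, independent of $d$. After a quasiconformal straightening $\Phi_1$ of the quarter-annulus $\Omega$ to the square $Q=[0,2d\pi]^2$, the map $\check G$ equals $2\cosh z$ on $Q$ except on two thin trapezoids $S,T$ abutting the left edge; on the tall trapezoid $S$ one uses the explicit linear interpolation $\Phi_{3,2}(s+it)=(1-s)e^{it}+s\cdot 2\cosh(1+it)$ between $\partial\D$ and the \emph{fixed} ellipse $\partial\mathcal E_1$. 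The Linear Interpolation Theorem~\ref{thm:linear-interpolation} then bounds the dilatation in terms of the speeds and angles of these two $d$-independent curves; only the length of the $t$-interval grows with $d$, which is irrelevant to the pointwise dilatation. The critical points on $(E\cap i\R)\setminus D$ are handled automatically because $\check G=2\cosh z$ there.
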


Observe that two definitions of $G$ match at the points $z=\pm Ri\in \partial D\cap i\R$ as
$$
2\cosh(\pm Ri)=e^{\pm d\pi i}e^{\mp \tfrac{\pi}{3}i}+e^{\mp d\pi i}e^{\pm \tfrac{\pi}{3}i}=(-1)^d=\left(\frac{\pm Ri}{R}\right)^{2d}.
$$
The proof of Lemma~\ref{lem:interpolation} will be given in the Appendix~\ref{sec:a1} (see p. \pageref{proof:lem:interpolation}). 

\begin{figure}[h!]
%\vspace{30pt}
\def\svgwidth{.9\linewidth}
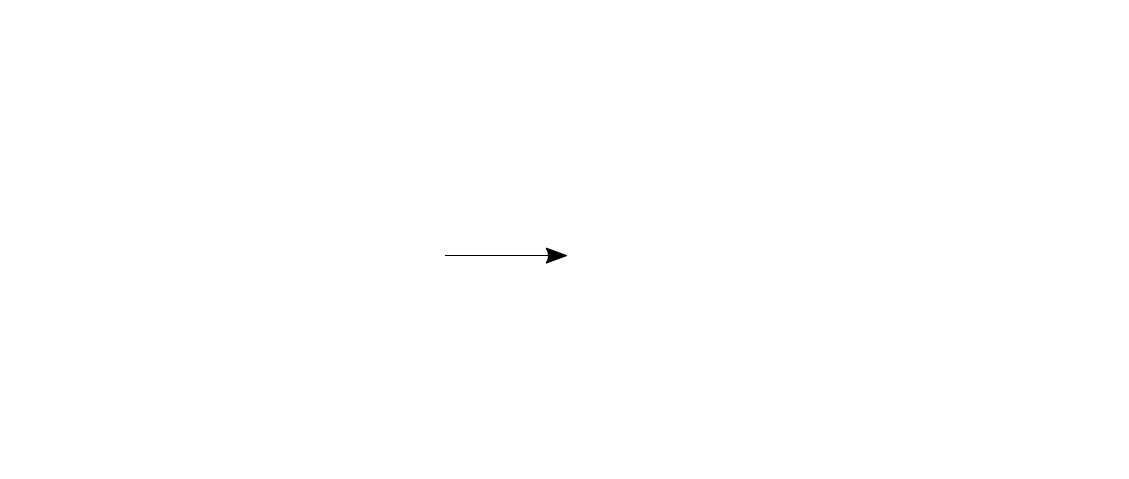
\caption{The quasiregular map $G:E\setminus D\to \overline{\mathcal{E}}_{2d\pi}$ from Lemma~\ref{lem:interpolation} that interpolates $g(z)=2\cosh z$ on $\partial E$ and $h(z)=(z/R)^{2d}$ on $\partial D$ with $d=3$.}
\label{fig:interpolation-annulus}
\end{figure}

\begin{rmk}
Note that Lemma~\ref{lem:interpolation} is similar to Bishop's $\exp$-$\cosh$ Interpolation Lemma \cite[Lemma~7.1]{bishop15}\vseven{, which claims that given a partition of $i\R$ into two collections of intervals $\mathcal{J}_1$ and $\mathcal{J}_2$ with endpoints in $i\pi\Z$, there exists a quasiregular mapping $\nu:\mathbb{H}_r\to\C\setminus[-1,1]$ such that $\nu(z) = \cosh z$ on $\mathcal{J}_1$ and $\nu(z) = \exp z$ on $\mathcal{J}_2$. Unfortunately, 
such a quasiregular mapping cannot exist because of the criticality/non-criticality of the maps. In fact, suppose, for example, $\nu(iy) = \exp (iy)$ for $y\ge 0$ and $\nu(iy) = \cosh (iy)$ for $y <0$ and $\nu_j$ is quasiregular in $\mathbb{H}_r$. If the local map $\nu_j$ near $0$ covers the upper half unit disk $n$ times and its complement $(n+1)$ times for some $n \ge 0$, let
$$
\alpha = \frac{\pi}{\frac{3\pi}{2}(n+1) + \frac{\pi}{2}n} = \frac{2}{4n+3},
$$
then, by composing the branched covering map $w \mapsto (w-1)^\alpha$, $h(z) = (\nu_j(z)-1)^{\alpha}$ induces a quasiconformal map onto a smooth Jordan domain near $0$,  
hence it is quasisymmetric.  Therefore the H\"older exponents at $0$ from both sides must coincide, 
and this contradicts the fact that $\cosh$ has a critical point at $0$, but $\exp$ does not.  
However, this can be amended by \redf{replacing $\cosh$ by a piecewise linear function on $i\R$ that maps $i\pi\Z$ to $\{\pm 1\}$ in the same way that $\cosh$ does} as described in Bishop's note\footnotemark[1].}
%Unfortunately that result is false as stated because the points $z\in i\pi\Z$ are critical points for $\cosh$, but regular points for $\exp$. Therefore the map $\nu_j:\mathbb H_r\to \C\setminus [-1,1]$ cannot be quasiregular as its boundary map would not be quasisymmetric at a point $z\in \overline{J}_1\cap \overline{J}_2\subseteq \partial\mathbb H_r$ with $J_1\in \mathcal J_1^j$ and $J_2\in \mathcal J_2^j$, where the function takes the values $\cosh$ on one side and $\exp$ on the other side. However, this can be amended by doing the interpolation away from the critical points.
\end{rmk}

Following the strategy of Bishop, each critical point $ih_n \in D_n$ \vseven{of $g_\mathbf{w}$} will produce a critical value $w_n\in \overline{\D}_{3/4}$, the position of which will be adjusted to create the oscillating wandering domain. In the following result we show that the position of $w=\rho_w(0)$ can be chosen using a map $\rho_w$ with bounded dilatation. This fact was already used in Bishop's construction; see the discussion on the map~$\rho$ in \cite[p.~28]{bishop15} or the map~$\rho_n$ in \cite[p.~485]{fagella-godillon-jarque15}.

\begin{lem} 
\label{lem:shift-interpolation} 
\label{lem:interpolation2}
There exists $K_2 >1$ such that for all $w \in \D_{3/4}$,  
there exists a \mbox{$K_2$-quasiconformal} mapping $\rho_w: \barD \to \barD$ such that 
\begin{align} 
\rho_w(z) = 
\begin{cases} 
z, \ &\text{ if } z\in\partial \D, \\
z+w,  \ &\text{ if }  z\in\overline{\D}_{1/8},
\end{cases}
\end{align}
and $\supp \rho_w\subseteq \overline{\D}\setminus \D_{1/8}$. Moreover the Beltrami coefficient $\mu_{\rho_{w}}$ depends continuously and holomorphically on $w \in \D_{3/4}$ in the $L^\infty$ sense.
\end{lem}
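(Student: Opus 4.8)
The plan is to construct $\rho_w$ explicitly as an interpolation between the identity on $\partial\D$ and the translation $z\mapsto z+w$ on $\overline{\D}_{1/8}$, using the standard ``radial interpolation'' trick. First I would fix the annulus $A:=\overline{\D}\setminus \D_{1/8}$ and write points in polar-ish coordinates $z=r\,\zeta$ with $\zeta\in\partial\D$ and $r\in[1/8,1]$. Choose a smooth cutoff $\lambda:[1/8,1]\to[0,1]$ with $\lambda(1/8)=1$, $\lambda(1)=0$, and $\lambda'$ bounded by an absolute constant, and define
\begin{equation*}
\rho_w(z):=\begin{cases} z+w, & |z|\le 1/8,\\ z+\lambda(|z|)\,w, & 1/8<|z|<1,\\ z, & |z|\ge 1.\end{cases}
\end{equation*}
This is a $C^1$ map (after mild smoothing of $\lambda$ at the endpoints), it is the identity outside $\overline{\D}$, and $\supp\mu_{\rho_w}\subseteq A$ as required.

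The next step is to check that $\rho_w$ is an orientation-preserving homeomorphism of $\overline{\D}$ and to bound its dilatation uniformly in $w\in\D_{3/4}$. For injectivity, since $\rho_w(z)-z=\lambda(|z|)w$ has modulus at most $|w|<3/4$ and, more importantly, the map restricted to each circle $|z|=r$ is a rigid translation by $\lambda(r)w$, one sees $\rho_w$ maps $|z|=r$ to a translated circle; a direct computation of the Jacobian (writing $z=x+iy$ and differentiating $\lambda(|z|)$) shows $\Jac\rho_w>0$ everywhere on $A$ provided $\|\lambda'\|_\infty$ is controlled, which we arrange by taking the transition interval $[1/8,1]$ to be of fixed length and $\lambda$ to decrease linearly (smoothed). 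The Beltrami coefficient is
$\mu_{\rho_w}(z)=\dbar\rho_w/\partial\rho_w$, and since $\dbar\rho_w = w\,\dbar(\lambda\circ|\cdot|)$ and $\partial\rho_w = 1 + w\,\partial(\lambda\circ|\cdot|)$ with $\dbar(\lambda(|z|))=\tfrac12\lambda'(|z|)\tfrac{z}{|z|}$ and similarly for $\partial$, we get $|\mu_{\rho_w}(z)|\le \dfrac{\tfrac12|w|\,|\lambda'(|z|)|}{1-\tfrac12|w|\,|\lambda'(|z|)|}$, which is bounded away from $1$ by a constant $K_2=K_2(\|\lambda'\|_\infty)$ depending only on the fixed cutoff, uniformly for $|w|\le 3/4$. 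On the boundary values, $\rho_w$ equals $z$ on $\partial\D$ and $z+w$ on $\overline{\D}_{1/8}$ by construction, and it maps $\overline{\D}$ onto $\overline{\D}$ because it is the identity on $\partial\D$ together with being an orientation-preserving local homeomorphism, hence a proper degree-one map of the disc.

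Finally, for the continuity and holomorphic dependence on $w$: from the explicit formula, $\mu_{\rho_w}(z) = \dfrac{\tfrac12 w\,\lambda'(|z|)\,\bar z/|z|}{1+\tfrac12 w\,\lambda'(|z|)\,z/|z|}$ on $A$ and $0$ elsewhere, which is, for each fixed $z$, a M\"obius function of $w$ with no pole in the disc $|w|<3/4$ (the denominator is bounded below by $1-\tfrac12\cdot\tfrac34\|\lambda'\|_\infty>0$ once $\|\lambda'\|_\infty$ is chosen appropriately). Since the denominator is bounded below uniformly in $(z,w)$, the map $w\mapsto \mu_{\rho_w}$ is a bounded-denominator rational (hence holomorphic) function of $w$ with values in $L^\infty$, and the $L^\infty$ norm of the difference $\mu_{\rho_{w_1}}-\mu_{\rho_{w_2}}$ is $O(|w_1-w_2|)$, giving both the continuous and holomorphic dependence claimed. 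The only mildly delicate point — the main obstacle — is ensuring injectivity (positivity of the Jacobian) while keeping the dilatation bound independent of $w$; this is handled by fixing once and for all a cutoff $\lambda$ on the fixed annulus $[1/8,1]$ with $\|\lambda'\|_\infty$ small enough that $\tfrac12\cdot\tfrac34\|\lambda'\|_\infty<1$, which makes every estimate above uniform. One could alternatively invoke a general interpolation lemma for quasiconformal maps of an annulus with prescribed boundary values (as in standard quasiconformal surgery references), but the explicit construction above is self-contained and makes the holomorphic dependence transparent.
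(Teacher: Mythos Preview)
Your approach is correct and genuinely different from the paper's. The paper maps the annulus $\overline{\D}\setminus\D_{1/8}$ to a rectangle via the logarithm and then applies its Linear Interpolation Theorem to a finite subdivision into smaller rectangles, checking that corner angles stay bounded away from $0$ and~$\pi$; this fits their general interpolation framework but leaves the holomorphic $w$-dependence somewhat implicit. Your radial formula $\rho_w(z)=z+\lambda(|z|)\,w$ is more direct and makes the holomorphic dependence of $\mu_{\rho_w}$ on $w$ completely transparent, since $\mu_{\rho_w}(z)$ is visibly a M\"obius function of $w$ at each $z$. One numerical point needs tightening: the condition you state, $\tfrac12\cdot\tfrac34\,\|\lambda'\|_\infty<1$, only guarantees that the \emph{denominator} in your bound for $|\mu_{\rho_w}|$ stays positive; to actually conclude $|\mu_{\rho_w}|<1$ (equivalently, positive Jacobian) you need the stronger inequality $\tfrac12\cdot\tfrac34\,\|\lambda'\|_\infty<\tfrac12$, i.e.\ $\|\lambda'\|_\infty<4/3$. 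Since $\lambda$ must drop from $1$ to $0$ over an interval of length $7/8$, one always has $\|\lambda'\|_\infty\ge 8/7$, so ``small enough'' is not quite the right phrase --- but the linear choice gives $\|\lambda'\|_\infty=8/7<4/3$ with room to spare, and then global injectivity also follows because the image circles $\{z:|z-\lambda(r)w|=r\}$ are strictly nested (the nesting condition is exactly $|w|\,\|\lambda'\|_\infty<1$). A trivial slip: in your explicit formula for $\mu_{\rho_w}$ the factors $z/|z|$ and $\bar z/|z|$ are interchanged between numerator and denominator (recall $\partial_{\bar z}|z|=z/(2|z|)$), though this does not affect any estimate.
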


The proof of Lemma~\ref{lem:shift-interpolation} will be given in the Appendix~\ref{sec:a1} (see p. \pageref{proof:lem:shift-interpolation}). \vfive{One} can find a sketch of the function $\rho_w$ in Figure~\ref{fig:interpolation-rho}. Note that $\rho_{\overline{w}}(\overline{z})=\overline{\rho_w(z)}$ \vfive{for $z\in\D$}.

From now on we fix $K := \max\{K_1,K_2\}$. We define $\N_N:=\N\setminus\{1,\hdots,N-1\}$ and $\Z_N:=\Z\setminus\{-N+1,\hdots,N-1\}=\N_N\cup(-\N_N)$, where the constant $N\in\N$ will be determined in Section~\ref{sec:domains}. Combining the two previous lemmas we can define the $K$-quasiregular map $g_{\mathbf{w}}$ (see Figure~\ref{fig:setup}). 

\begin{dfn}[$K$-quasiregular map $g_{\mathbf{w}}$]
\label{dfn:gw}
For $n\in \N$, let $d_n\in\N$ and $R_n,h_n\in\R_+$ be the quantities given in Definition~\ref{dfn:hn-dn-Rn-En-Dn-Qn}. Recall that, for $n\geqslant 3$, we defined 
$$
\begin{array}{c}
E_{\pm n} := \{ z \in \C\ : \ | \textup{Re}\, z| \leqslant 2d_n \pi, \ | \textup{Im}\, z\mp h_n| \leqslant 2 d_n \pi \},\vspace{5pt}\\
D_{\pm n} := \{z \in \C\ : \ |z\mp ih_n| < R_n\}\subseteq E_{\pm n},
\end{array}
$$ 
and 
let $G_n:E\to \overline{\mathcal{E}}_{2d\pi}$ be the quasiregular mapping from Lemma \ref{lem:key-interpolation} for $d=d_n$ and $R=R_n$. Then we have $E = E_n - ih_n$ and $D = D_n - ih_n$.    
For every sequence $\mathbf{w} = (w_{N}, w_{N+1}, w_{N+2}, \dots) \in \D_{3/4}^{\N_N}$, define the function $g_{\mathbf{w}}: \C \to \C$ as follows:  
\begin{align}
g_{\mathbf{w}}(z): = 
\begin{cases} 
G_n(z \mp ih_n), \ &\text{ if } z\in E_{\pm n} \sminus D_{\pm n}   \text{ with } n \geqslant N, \\
\rho_{w_n} \circ G_n(z \mp ih_n), \ &\text{ if } z\in  D_{\pm n}  \text{ with } n \geqslant N, \\
%\rho_{\overline{w_n}} \circ G_n(z + ih_n), \ &\text{ if } z\in  D_{n}  \text{ with } n \leqslant -N, \\
2 \cosh z, \ &\text{ otherwise. }
\end{cases}
\end{align}%\margin{problem with\\ holomorphic\\ dependence?}
Then $g_{\mathbf{w}}$ is a $K$-quasiregular map such that 
$$
\supp \mu_{g_{\mathbf{w}}} \subseteq \bigcup_{n \in \Z_N} E_n \setminus \D\left(ih_n, \vseven{\left(\tfrac{1}{8}\right)^{1/(2d_n)}R_n}\right),
$$ 
and $g_{\mathbf{w}}(-z)=g_{\mathbf{w}}(z)$ for all $z\in\C$.%$g_{\mathbf{w}}(z) = g(z) = 2 \cosh z$ for all $z\in\C \setminus \bigcup_{n \in \Z_N} E_n$.
\end{dfn}

\begin{figure}[h!]
\vspace*{-25pt}
\def\svgwidth{\linewidth}
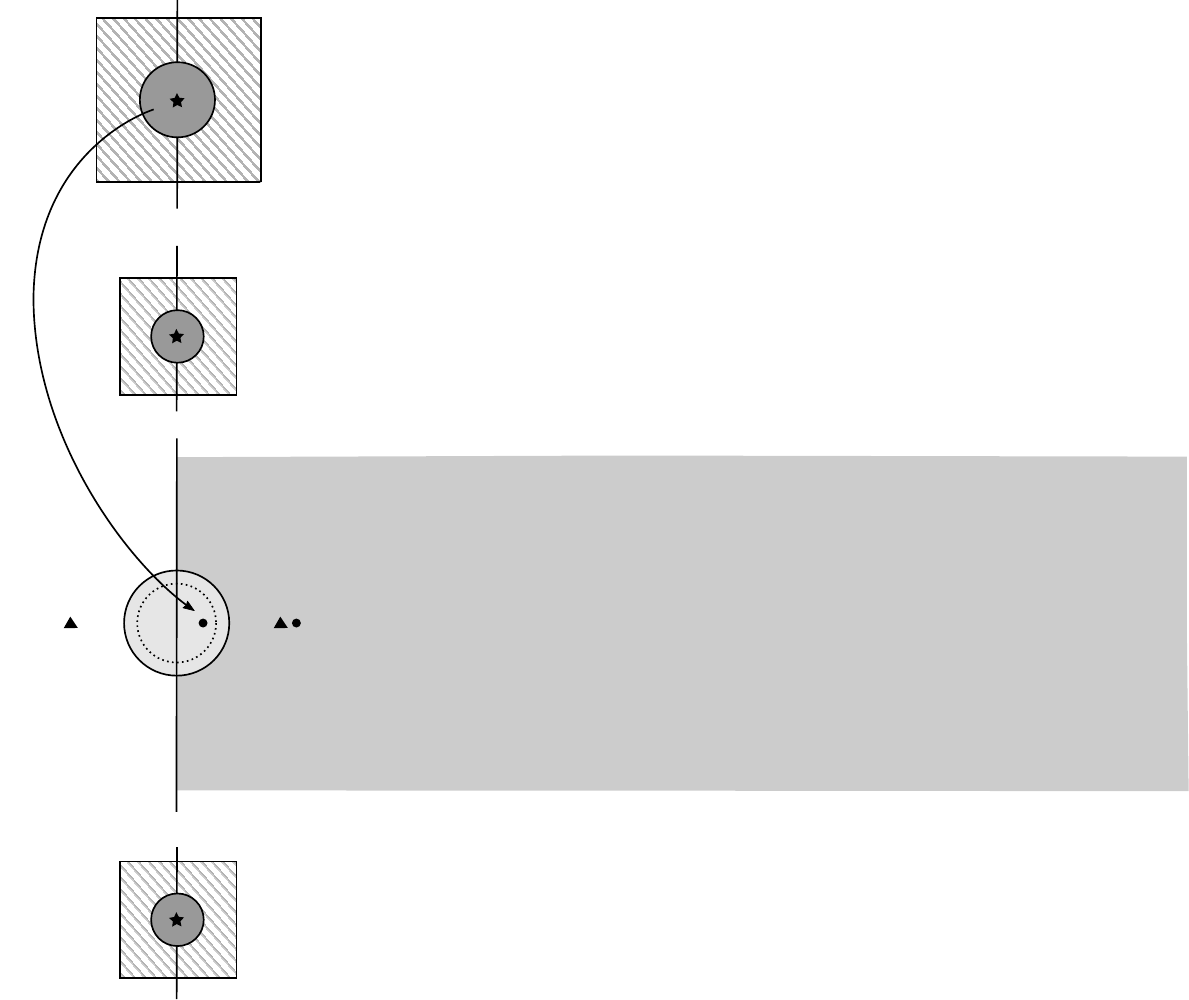
\caption{Sketch of the $K$-quasiregular map $g_{\mathbf{w}}:\C\to\C$ from Definition~\ref{dfn:gw}.}
\label{fig:setup}
\end{figure}

%The base function $g(z)=2\cosh z$ is both even and symmetric with respect to the real line. However, it is not possible to make the modified function~$g_\mathbf{w}$ have these two properties at the same time. In Definition~\ref{dfn:gw}, we chose the function~$g_\mathbf{w}$ to be even, but not symmetric with respect to $\R$. See Definition~\ref{dfn:gw-2} for an alternative modification $\tilde{g}_\mathbf{w}$ that is symmetric with respect to $\R$ but not an even function.

%Observe that for $z\in D_n$,
%$$
%g_\mathbf{w}(\overline{z})=\rho_{\overline{w_n}} ( G_n(\overline{z} + ih_n))=\rho_{\overline{w_n}} ( G_n(\overline{z - ih_n}))=\rho_{\overline{w_n}} ( \overline{G_n(z - ih_n)})=\overline{g_\mathbf{w}(z)},
%$$
%and therefore the map $g_\mathbf{w}$ is symmetric with respect to the real line.

%\begin{figure}[h]
%\vspace{10pt}
%\def\svgwidth{.9\linewidth}
%\input{setup2.pdf_tex}
%\vspace{5pt}
%\caption{Sketch of the $K$-quasiregular map $g_{\mathbf{w}}:\C\to\C$ from Definition~\ref{dfn:gw}.}
%\label{fig:setup}
%\end{figure}

%\pagebreak

\begin{dfn}[Entire function $f_{\mathbf{w}}$ and $K$-quasiconformal map $\phi_{\mathbf{w}}$]
\label{dfn:fw-phiw}
Let $g_{\mathbf{w}}$ be the $K$-quasiregular map from Definition~\ref{dfn:gw}. By the Measurable Riemann Mapping Theorem \cite[Theorem~3~\redf{in Chapter V}]{ahlfors06}, 
there exists a unique $K$-quasiconformal mapping $\phi_{\mathbf{w}} : \C \to \C$ such that 
$\phi_{\mathbf{w}}(0) =0$, $\phi_{\mathbf{w}}(1)=1$ and $\mu_{\phi_{\mathbf{w}}} = \mu_{g_{\mathbf{w}}}$.  
Then define 
\begin{equation}
f_{\mathbf{w}} := g_{\mathbf{w}} \circ \phi_{\mathbf{w}}^{-1}, 
\end{equation}
which is an entire function \vfive{in the class $\mathcal{B}$ with $S(f_\mathbf{w})=\{\pm 2\}\cup
\overline{\{w_n\}}_n$}.  
\end{dfn}

\begin{rmk}
The base function $g(z)=2\cosh z$ is both even and symmetric with respect to the real line. However, it is not possible to make the modified function~$g_\mathbf{w}$ have these two properties at the same time. In Definition~\ref{dfn:gw}, we chose the function~$g_\mathbf{w}$ to be even, but not symmetric with respect to $\R$ and therefore $\phi_\mathbf{w}$ (and, consequently, also $f_\mathbf{w}$) does not preserve the real line. See Definition~\ref{dfn:gw-2} for an alternative modification $\tilde{g}_\mathbf{w}$ that is symmetric with respect to $\R$ but not even. 
\end{rmk}

%Recall that a function $f:\C^\N\to \C$ is said to be \textit{separately holomorphic} if for each $j\in\N$ and every choice of $z_1,\hdots,z_{j-1},z_{j+1},\hdots$, the function
%$$
%z\mapsto f(z_1,\hdots,z_{j-1},z,z_{j+1},\hdots)
%$$ 
%is holomorphic. Hartog's Theorem \cite[Theorem~1.2.5]{krantz92} tells us that a function that is separately holomorphic on $\C^k$, for $k\in\N$, is holomorphic.
%
%\begin{lem} \label{lem:holomorphic-dependence} 
%The $K$-quasiconformal map $\phi_{\mathbf{w}}$ from Definition~\ref{dfn:fw-phiw} is a separately holomorphic function of the sequence $\mathbf{w}\in \D_{3/4}^{\N_N}$, that is, for every $z\in\C$ and $k\in\N$, the function 
%$$
%(w_1,w_2,\hdots,w_k)\mapsto \phi_{\mathbf{w}}(z)
%$$
%with $\mathbf{w}=(w_1,w_2,\hdots,w_{k-1},w_{k},w_{k+1},\hdots)$ and $w_j\in \D_{3/4}$ fixed for all $j>k$, is a holomorphic map from $\C^k$ to $\C$.
%\end{lem}
%\begin{proof} 
%By Lemma~\ref{lem:interpolation2}, for every $z\in\C$ and $n\in\N$, if we fix all $w_j$ for $j\neq n$, the function $w_n\mapsto \mu_{g_{\mathbf{w}}}(z)$ with $\mathbf{w}=(w_1,w_2,\hdots,w_n,\hdots)$ is holomorphic. In fact, if we put $\mathbf{w}_0=(\tfrac{1}{2},\tfrac{1}{2},\hdots)$, then for every $n\in \N$,
%$$
%\mu_{g_{\mathbf{w}}}=\mu_{g_{\mathbf{w}_0}}+(w_n-\tfrac{1}{2})\nu_n+o(w_n-\tfrac{1}{2})
%$$
%for some function $\nu_n\in L^\infty(\C)$. Thus, it follows from Hartogs Theorem \cite[Theorem~1.2.5]{krantz92} that $\mu_{g_{\mathbf{w}}}$ depends holomorphically on $\mathbf{w}\in\D_{3/4}^{\N_N}$ if we fix all except finitely many of the components of $\mathbf{w}$.
%\end{proof}

\section{Estimates on quasiconformal maps}

\label{sec:qc-estimate}

%In order to proof Theorem \ref{thm:main}, we will first construct a quasiregular map $g$ such that $g(z)=2\cosh z$ on $\C\setminus E$, where $E$ is a disjoint union of squares $E_n$ centered along the imaginary axis, and $g(z)=(z-z_0)^{d_n}$ on discs $D_n\subseteq E_n$ for all $n\in \Z^*$. We will do a quasiconformal interpolation on the sets $E_n\setminus D_n$ that is independent of the powers $d_n$ that we choose on each disc $D_n$. Then using the Riemann mapping theorem we will obtain an entire function $f\in\mathcal{B}$ such that $f=g\circ  \phi^{-1}$. We know the function $g$ explicitly, but we need to show that applying the quasiconformal map $\phi^{-1}$ preserves the dynamics that we prescribed. 

In the previous section we constructed a $K$-quasiregular map $g_{\mathbf{w}}$ and then, using the measurable Riemann mapping theorem, we obtained an entire function $f_{\mathbf{w}}\in\mathcal{B}$ such that $f_{\mathbf{w}}=g_{\mathbf{w}}\circ  \phi_{\mathbf{w}}^{-1}$. Despite we know the function $g_{\mathbf{w}}$ explicitly, in order to be able to prescribe the dynamics of $f_{\mathbf{w}}$, we need to estimate the quasiconformal map $\phi_{\mathbf{w}}$, which is close to the identity. 

Define the cylinder $\calC=\C/2\pi i \Z$ and its distance $|w|_{\calC}=\inf \{ |w+2\pi i n|\ : \ n \in \Z\}$. Important estimates on the function $\phi_{\mathbf{w}}$ will be derived from the following result on quasiconformal maps from \cite{shishikura}.

\begin{thm}[{Key Inequality \cite[Lemma~9]{shishikura}}] 
\label{thm:KeyInequality} 
Given $K >1$, there exist $0<\delta_1<1$ and $C>0$ such that  
if $\redf{f}:\C \to \C$ is a $K$-quasiconformal map with $\redf{f}(0)=0$ and $0<|z_2| \le \delta_1|z_1|$, 
then 
$$
\begin{aligned} \label{eq:KeyIneq}
\left| \log \frac{\redf{f}(z_1)}{z_1}\! -\! \log \frac{\redf{f}(z_2)}{z_2} \right|_{\calC}\! \!
&\leqslant 
2C\!\left( \left| \iint_\C \frac{\mu_\redf{f}(z)\vphi_{z_1,z_2 \vphantom{|}}(z)}{1-|\mu_\redf{f}(z)|^2}dx \tinyhsp dy\right|\!
+\!\! \iint_\C \frac{|\mu_\redf{f}(z)|^2 |\vphi_{z_1,z_2 \vphantom{|}}(z)|}{1-|\mu_\redf{f}(z)|^2}dx \tinyhsp dy\right),  
\end{aligned}
$$
where $\vphi_{z_1,z_2 \vphantom{|}}(z) = \frac {z_1}{z(z-z_1)(z-z_2)}$. 
\end{thm}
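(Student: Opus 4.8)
The plan is to prove the Key Inequality by reducing it to the well-known integral representation for the logarithmic derivative of a quasiconformal map and then estimating the resulting kernel. First I would recall that if $\phi$ is $K$-quasiconformal with $\phi(0)=0$, then, after a standard approximation argument allowing us to assume $\phi$ is smooth and conformal near $0$ and $\infty$, the quantity $\log(\phi(z)/z)$ can be recovered from $\mu_\phi$ via a Cauchy-type integral. Concretely, writing $\phi = \Phi \circ (\text{something})$ or using the Ahlfors--Bers machinery, one has a formula of the shape
\[
\log\frac{\phi(z_1)}{z_1} - \log\frac{\phi(z_2)}{z_2} = \frac{1}{\pi}\iint_\C \nu(z)\,\varphi_{z_1,z_2}(z)\,dx\,dy
\]
(modulo $2\pi i$), where $\nu$ is the Beltrami coefficient of the inverse map or of a conjugate normalized map, and $\varphi_{z_1,z_2}(z) = \frac{z_1}{z(z-z_1)(z-z_2)}$ is exactly the difference of two Cauchy kernels $\frac{1}{z-z_1}-\frac{1}{z-z_2}$ divided out by $z$ after partial fractions. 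The appearance of $\varphi_{z_1,z_2}$ in the statement is the tell-tale sign that this is the mechanism; the factor $\frac{1}{z}$ encodes the normalization $\phi(0)=0$, which kills the constant ambiguity and makes the integral converge at $\infty$.

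The key technical point is that the formula for $\log(\phi(z)/z)$ is not linear in $\mu_\phi$ — it involves the Neumann series for $(I - \mu_\phi S)^{-1}$ where $S$ is the Beurling transform. The clean way to handle this is to split the contribution into the first-order term, which is linear in $\mu_\phi$ and produces the term
\[
\left|\iint_\C \frac{\mu_\phi(z)\,\varphi_{z_1,z_2}(z)}{1-|\mu_\phi(z)|^2}\,dx\,dy\right|,
\]
and the higher-order remainder, which I would bound using the $L^2$-boundedness of the Beurling transform (norm $1$ on $L^2(\C)$) together with $\|\mu_\phi\|_\infty \le \frac{K-1}{K+1} < 1$; a geometric-series estimate then shows the remainder is controlled by
\[
\iint_\C \frac{|\mu_\phi(z)|^2\,|\varphi_{z_1,z_2}(z)|}{1-|\mu_\phi(z)|^2}\,dx\,dy,
\]
up to the universal constant $C = C(K)$. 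The hypothesis $0 < |z_2| \le \delta_1 |z_1|$ is used to guarantee that $\varphi_{z_1,z_2} \in L^2(\C)$ with a controlled norm — indeed $\varphi_{z_1,z_2}$ has integrable-square singularities at $0, z_1, z_2$ and decays like $|z|^{-3}$ at infinity, so $\|\varphi_{z_1,z_2}\|_{L^2}$ is finite, and the separation condition prevents the singularities at $z_2$ and $0$ from colliding and blowing up the constant.

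The main obstacle, and the part requiring the most care, will be justifying the passage from the abstract Ahlfors--Bers integral formula to the precise form stated here with the correct kernel $\varphi_{z_1,z_2}$ and with the $\tfrac{1}{1-|\mu_\phi|^2}$ weights, while keeping the constant $C$ independent of $z_1, z_2$ and of $\phi$. In particular one must be careful about which Beltrami coefficient appears (that of $\phi$ versus that of $\phi^{-1}$, which differ by a sign and a change of variables with Jacobian factor — this is precisely where $1-|\mu_\phi|^2$ enters), and about the mod-$2\pi i$ ambiguity inherent in taking logarithms, which forces the left-hand side to be measured with $|\cdot|_\calC$ rather than $|\cdot|$. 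Since this is quoted as \cite[Lemma~9]{shishikura}, in the present paper I would simply invoke it; the sketch above is the shape of the argument one would give if proving it from scratch.
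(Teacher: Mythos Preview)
The paper does not prove this theorem; it is quoted verbatim from \cite[Lemma~9]{shishikura} and used as a black box, with only the remark that ``the constants $\delta_1$, $C$ depend on $K$ and are related to the hyperbolic metric on the twice punctured plane $\C\setminus\{0,1\}$ (see \cite[Lemma~5]{shishikura}).'' You correctly recognise this in your final sentence, so there is nothing to compare on the level of the paper itself.

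That said, your sketch of a proof via the Ahlfors--Bers integral formula and the Neumann series for $(I-\mu_\phi S)^{-1}$ is a plausible route, but the paper's hint points in a different direction: the appearance of the hyperbolic metric on $\C\setminus\{0,1\}$ suggests that in the cited source the constants come from a holomorphic-motion or Schwarz--Pick type argument (the kernel $\varphi_{z_1,z_2}$ has poles exactly at $0,z_1,z_2$, and after normalising these to $0,1,\infty$ one lands on the twice-punctured plane). Your Beurling-transform approach would give \emph{some} constant $C(K)$, but tracking the precise dependence and the role of $\delta_1$ through the Neumann series is delicate, and it is not obvious that the quadratic remainder really collapses to the single integral $\iint |\mu_\phi|^2|\varphi_{z_1,z_2}|/(1-|\mu_\phi|^2)$ rather than a more complicated expression. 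If you were to write this out in full you would need to be more careful there; but for the purposes of this paper, simply citing the result is exactly what the authors do.
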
 

The constants $\delta_1$, $C$ depend on $K$ and are related to the hyperbolic metric on the twice punctured plane $\C\setminus\{0,1\}$ (see \cite[Lemma~5]{shishikura}). In the next corollary we adapt the Key Inequality to the form in which we will use it in this paper.

\begin{cor} \label{cor:KeyInequality}
Let the constants $K\!>\!1$, $0\!<\!\delta_1\!<\!1$ and $C\!>\!0$ be as in Theorem~\ref{thm:KeyInequality}. If $\phi:\C \to \C$ is a $K$-quasiconformal map and $\alpha, \beta, \gamma\in \C$ are distinct points with
\begin{equation}
\label{eq:key-ineq-condition-abc}
0<|\gamma-\alpha| \le \delta_1|\beta-\alpha|,
\end{equation}
then
\begin{equation}
\begin{aligned} \label{eq:KeyIneq-cor}
\left| \log \frac{\phi(\beta)-\phi(\alpha)}{\beta-\alpha} - \log \frac{\phi(\gamma)-\phi(\alpha)}{\gamma-\alpha} \right|_{\calC} 
&\le  C(K -1) \iint_{\supp \mu_\phi} \frac{|\beta- \alpha| \tinyhsp dx\tinyhsp dy}{| (z-\alpha)(z-\beta)(z-\gamma)|}.
\end{aligned}
\end{equation}
\end{cor}
\begin{proof} 
Applying Theorem \ref{thm:KeyInequality} to 
$f(z)=\phi(z+\alpha)-\phi(\alpha)$ with $z_1=\beta-\alpha$, $z_2=\gamma-\alpha$:
$$
\begin{array}{c}
\left| \log \dfrac{\phi(\beta)-\phi(\alpha)}{\beta-\alpha} - \log \dfrac{\phi(\gamma)-\phi(\alpha)}{\gamma-\alpha} \right|_{\calC}  \hspace*{\fill}\vspace{5pt}\\
\hspace*{\fill}\displaystyle\leqslant 2C\!\left( \left| \iint_\C \frac{\mu_\phi(z)\vphi_{z_1,z_2 \vphantom{|}}(z-\alpha)}{1-|\mu_\phi(z)|^2}dx \tinyhsp dy\right|\!
+\!\! \iint_\C \frac{|\mu_\phi(z)|^2 |\vphi_{z_1,z_2 \vphantom{|}}(z-\alpha)|}{1-|\mu_\phi(z)|^2}dx \tinyhsp dy\right)\vspace{5pt}\\
\displaystyle\leqslant 2C \iint_{\supp \mu_\phi(z)} \dfrac{|\mu_\phi(z)|+|\mu_\phi(z)|^2}{1-|\mu_\phi(z)|^2}\frac{|\beta- \alpha|\tinyhsp dx\tinyhsp dy}{| (z-\alpha)(z-\beta)(z-\gamma)|}.\hspace*{\fill}
\end{array}
$$ 
Then, the result follows from the fact that 
$$
2 \dfrac{|\mu_\phi(z)|+|\mu_\phi(z)|^2}{1-|\mu_\phi(z)|^2}=2 \dfrac{|\mu_\phi(z)|}{1-|\mu_\phi(z)|}=K_\phi(z)-1\leqslant K-1
$$
because $\phi$ is a $K$-quasiconformal map.
\end{proof}

In the rest of this section, we make the following standing assumption. 

\begin{assump}
\label{ass:qu-estimates}
Let $K>1$ be a fixed constant and, for $m\in\N$, consider the disc $B_m := \D(\zeta_m, r_m)$ with $\zeta_m \in\C$ and $r_m>0$. Assume that 
\begin{enumerate}
\item[(i)] $\sum_{m=1}^\infty r_m/|\zeta_m| < +\infty$;\vspace{5pt}
\item[(ii)] $|\zeta_m| \geqslant 4$ and $r_m/|\zeta_m| \leqslant \min\{\frac{1}{4},\delta_1\}$ for all $m\in\N$, where $0<\delta_1<1$ is the constant from Theorem~\ref{thm:KeyInequality};\vspace{5pt}
\item[(iii)] there is a $K$-quasiconformal map $\phi:\C \to \C$ such that $\phi(0)=0$, $\phi(1)=1$ and $\supp \mu_\phi \subseteq  \bigcup_{m =1}^\infty B_m$.  
\end{enumerate}
\end{assump}

%\blueb{Observe that here we state the results with large generality, but later we will use them by putting $\zeta_{2k-1}=-\zeta_{2k}=ih_{N-1+k}$ and $r_{2k-1}=r_{2k}=3R_{N-1+k}$ for all $k\in\N$, where for $n\geqslant 3$, $h_n,d_n$ are as in Definition~\ref{dfn:hn-dn-Rn-En-Dn-Qn} and $N\geqslant 3$ is sufficiently large, so $E_{N-1+k}\subseteq B_{2k-1}$ and $E_{-N+1-k}=-E_{N-1+k}\subseteq B_{2k}$ for all $k\in\N$, and $\phi$ will be the quasiconformal map $\phi_{\mathbf{w}}$ from Definition~\ref{dfn:fw-phiw} (see Lemma~\ref{lem:assumption}).}

The following three lemmas are consequences of Corollary \ref{cor:KeyInequality}, and their proofs 
will be given in the Appendix~\ref{sec:proof-of-estimates}. \blueb{We state the results with large generality (see Figures~\ref{fig:qc-estimate2} and \ref{fig:qc-estimate3}). Later we will apply them by reindexing the discs $\{3D_n\}_{n\in\Z_N}$ to $\{B_m\}_{m\in\N}$ and $\phi$ will be the quasiconformal map $\phi_{\mathbf{w}}$ from Definition~\ref{dfn:fw-phiw} (see Lemma~\ref{lem:assumption}). Note that $3D_{\pm n}=\D(\pm ih_n,3R_n)\supseteq E_{\pm n}$ for $n\geqslant N$.} 

\begin{lem}
\label{lem:qc-estimate1} 
Suppose that Assumption~\ref{ass:qu-estimates} holds. For every $\eps>0$, 
there exists $M_1=M_1(\eps)\in\N$ such that if $\supp \mu_\phi \subseteq \bigcup_{m = M_1}^{\infty} B_m$, then 
\begin{equation}
\label{eq:qu-estimate1}
\left| \log \frac{\phi(\zeta)}{\zeta} \right|_{\calC} < \eps,\quad \text{ for } \zeta \in \C \sminus \{0\},
\end{equation}
and, in particular,
\begin{equation}
\label{eq:qc-estimate1-2}
e^{-\varepsilon}|\zeta|<|\phi(\zeta)|<e^{\varepsilon}|\zeta|\quad \text{ and } \quad |\textup{arg}\,\phi(\zeta)-\textup{arg}\,\zeta\!\!\!\pmod{2\pi}|<\varepsilon,
\end{equation}
for all $\zeta\in\C\setminus\{0\}$.
\end{lem}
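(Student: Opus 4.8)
\textbf{Proof plan for Lemma~\ref{lem:qc-estimate1}.}
The plan is to apply Corollary~\ref{cor:KeyInequality} with $\phi$ the given $K$-quasiconformal map and with a suitable choice of three points $\alpha,\beta,\gamma$, and then to control the resulting integral over $\supp\mu_\phi\subseteq\bigcup_{m\geqslant M_1}B_m$ by making $M_1$ large. The natural choice is to take $\alpha=0$ and to use the normalization $\phi(0)=0$, $\phi(1)=1$; taking $\gamma=1$ and $\beta=\zeta$ (after handling trivially the case $|\zeta|\leqslant\delta_1^{-1}$, or rather arranging $0<|\gamma-\alpha|\leqslant\delta_1|\beta-\alpha|$, i.e. $|\zeta|\geqslant\delta_1^{-1}$), Corollary~\ref{cor:KeyInequality} gives
$$
\left|\log\frac{\phi(\zeta)}{\zeta}-\log\frac{\phi(1)}{1}\right|_{\calC}=\left|\log\frac{\phi(\zeta)}{\zeta}\right|_{\calC}\leqslant C(K-1)\iint_{\supp\mu_\phi}\frac{|\zeta|\,dx\,dy}{|z(z-\zeta)(z-1)|}.
$$
For $|\zeta|<\delta_1^{-1}$ one swaps the roles of $1$ and $\zeta$ (choosing $\beta=1$, $\gamma=\zeta$), getting the same bound with $|\zeta|$ replaced by $1$ in the numerator; in either case the integrand is, up to a fixed constant depending only on $\delta_1$, bounded by $1/(|z|\,|z-1|)$ on the region $|z|\geqslant\delta_1^{-1}$ far from both singularities $0$ and $1$ — but one must be more careful near $z=\zeta$, which is the point that moves.

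The key step is therefore to estimate, uniformly in $\zeta$, the integral
$$
I_m(\zeta):=\iint_{B_m}\frac{\max\{|\zeta|,1\}\,dx\,dy}{|z(z-\zeta)(z-1)|}
$$
over a single disc $B_m=\D(\zeta_m,r_m)$ with $|\zeta_m|\geqslant 4$ and $r_m\leqslant\delta_1|\zeta_m|$, $r_m\leqslant\tfrac14|\zeta_m|$. On $B_m$ we have $|z|\asymp|\zeta_m|$ and $|z-1|\asymp|\zeta_m|$ (since $|\zeta_m|\geqslant 4$), so these two factors contribute $\asymp|\zeta_m|^{-2}$ and can be pulled out. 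For the remaining factor $1/|z-\zeta|$ there are two cases: if $\zeta$ is far from $B_m$, say $\dist(\zeta,B_m)\geqslant r_m$, then $|z-\zeta|\asymp|\zeta-\zeta_m|$ on $B_m$ and $\iint_{B_m}|z-\zeta|^{-1}\asymp r_m^2/|\zeta-\zeta_m|$; combined with $\max\{|\zeta|,1\}\lesssim|\zeta-\zeta_m|+|\zeta_m|$ one obtains $I_m(\zeta)\lesssim r_m^2/|\zeta_m|^2+r_m^2/|\zeta_m|\lesssim r_m^2/|\zeta_m|$ using $|\zeta_m|\geqslant 4$. If instead $\zeta$ is close to or inside $B_m$, one uses the elementary bound $\iint_{\D(\zeta_m,r_m)}|z-\zeta|^{-1}\,dx\,dy\leqslant 2\pi r_m$ (the worst case being $\zeta$ at the centre), which holds regardless of where $\zeta$ is; since in that regime $|\zeta|\leqslant|\zeta_m|+r_m\lesssim|\zeta_m|$, we again get $I_m(\zeta)\lesssim r_m\cdot|\zeta_m|/|\zeta_m|^2=r_m/|\zeta_m|$. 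In all cases $I_m(\zeta)\leqslant C'\,r_m/|\zeta_m|$ for a constant $C'$ depending only on $\delta_1$.

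Summing over $m\geqslant M_1$ and using hypothesis~(i) that $\sum_{m}r_m/|\zeta_m|<\infty$, the tail $\sum_{m\geqslant M_1}r_m/|\zeta_m|$ tends to $0$ as $M_1\to\infty$; choosing $M_1=M_1(\eps)$ so that $C(K-1)C'\sum_{m\geqslant M_1}r_m/|\zeta_m|<\eps$ gives \eqref{eq:qu-estimate1}. The second assertion \eqref{eq:qc-estimate1-2} is immediate: writing $\log(\phi(\zeta)/\zeta)=\log|\phi(\zeta)/\zeta|+i\,\arg(\phi(\zeta)/\zeta)$, the cylinder distance $|\cdot|_{\calC}<\eps$ forces both the real part (giving $e^{-\eps}|\zeta|<|\phi(\zeta)|<e^{\eps}|\zeta|$) and, choosing the representative of the argument in $(-\pi,\pi]$, the imaginary part modulo $2\pi$ to be less than $\eps$ in absolute value. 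I expect the main obstacle to be the uniform treatment of the factor $1/|z-\zeta|$ as $\zeta$ ranges over all of $\C\setminus\{0\}$ — in particular organizing the case split cleanly and checking that the bound $C'r_m/|\zeta_m|$ really is uniform in $\zeta$ including when $\zeta$ lies in $B_m$ itself — together with the bookkeeping needed to reduce the general position of $\alpha,\beta,\gamma$ to the condition \eqref{eq:key-ineq-condition-abc} of Corollary~\ref{cor:KeyInequality} in the small-$|\zeta|$ regime.
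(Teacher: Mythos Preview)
Your approach is essentially the same as the paper's: apply Corollary~\ref{cor:KeyInequality} with $\alpha=0$, use the normalization $\phi(1)=1$, split the estimate on each $B_m$ according to whether $\zeta$ is near or far from $B_m$, invoke the bound $\iint_{B_m}|z-\zeta|^{-1}\,dx\,dy\leqslant 2\pi r_m$ (which is precisely the paper's Lemma~\ref{lem:simple-pole}) in the near case, and conclude $I_m(\zeta)\lesssim r_m/|\zeta_m|$ uniformly. The paper's case split is parametrized slightly differently (comparing $|\beta-\zeta_m|$ to a multiple of $|\zeta_m|$ via a parameter $H$) but the content is identical.

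One point deserves more care than ``bookkeeping''. Your swap $\beta=1$, $\gamma=\zeta$ for small $|\zeta|$ only satisfies condition~\eqref{eq:key-ineq-condition-abc} when $|\zeta|\leqslant\delta_1$, so the annulus $\delta_1<|\zeta|<\delta_1^{-1}$ is not covered by either of your two direct applications. The paper handles this by a two-step argument: first establish the bound $\bigl|\log(\phi(\beta)/\beta)\bigr|_{\calC}<\eps/2$ for all $|\beta|\geqslant\delta_1^{-1}$ using $\gamma=1$; then for any $\gamma$ with $|\gamma|\leqslant\delta_1^{-1}$, fix an auxiliary $\beta$ with $|\beta|>\delta_1^{-2}$ (so that $|\gamma|\leqslant\delta_1|\beta|$) and combine the Key Inequality bound on the difference with the already-established bound on $\bigl|\log(\phi(\beta)/\beta)\bigr|_{\calC}$ via the triangle inequality. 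You should adopt this device rather than the direct swap. Also, your intermediate expression $r_m^2/|\zeta_m|$ in the far case appears to be a slip for $r_m^2/(|\zeta_m|\,|\zeta-\zeta_m|)$, which is indeed $\lesssim r_m/|\zeta_m|$ since $|\zeta-\zeta_m|\gtrsim r_m$ there.
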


\redf{Observe that it is also possible to show that under the hypothesis of Lemma~\ref{lem:qc-estimate1}, the $K$-quasiconformal map $\phi$ is conformal at $\infty$, that is, the limit $\lim_{\zeta\to\infty}\phi(\zeta)/\zeta$ exists.}

\begin{lem} \label{lem:qc-estimate2} 
Suppose that Assumption~\ref{ass:qu-estimates} holds and suppose also that there exists $C_1>0$ such that 
if $z \in B_m$ and $z' \in B_{m'}$ with $m \neq m'$, 
then $|z-z'| \ge C_1 \sqrt{|z z'|}$.
For any $0<\kappa \leqslant 1$, there exists $C_2\,\blue{=C_2(\kappa)}>1$ such that for any $m\in\N$,
if   $|\zeta - \zeta_m| = \kappa r_m$, then
\begin{equation}
\label{eq:qc-estimate2}
\frac{1}{C_2} \kappa r_m \le |\phi(\zeta) - \phi(\zeta_m)| \le C_2 \kappa r_m.
\end{equation}
\end{lem}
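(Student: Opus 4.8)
The goal is to control $|\phi(\zeta)-\phi(\zeta_m)|$ from above and below when $\zeta$ lies on the circle of radius $\kappa r_m$ around $\zeta_m$. The natural tool is Corollary~\ref{cor:KeyInequality}, which compares the ratios $\frac{\phi(\beta)-\phi(\alpha)}{\beta-\alpha}$ and $\frac{\phi(\gamma)-\phi(\alpha)}{\gamma-\alpha}$. The plan is to take $\alpha=\zeta_m$, $\beta=\zeta$ (a point with $|\zeta-\zeta_m|=\kappa r_m$), and for $\gamma$ a reference point whose behaviour under $\phi$ we already understand from Lemma~\ref{lem:qc-estimate1} applied at a suitable location — for instance $\gamma$ on a small circle around $\zeta_m$ of radius comparable to $r_m$, or more simply comparing $\beta$ and another point $\beta'$ on the same circle $|\zeta-\zeta_m|=\kappa r_m$. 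Condition \eqref{eq:key-ineq-condition-abc} will be met because $|\gamma-\alpha|$ and $|\beta-\alpha|$ are both of order $\kappa r_m$, and $r_m/|\zeta_m|\le\delta_1$ guarantees everything sits inside the regime where the inequality applies (after possibly shrinking by a fixed factor).

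First I would fix $m$ and estimate the right-hand side of \eqref{eq:KeyIneq}, namely
\[
C(K-1)\iint_{\supp\mu_\phi}\frac{|\beta-\alpha|\,dx\,dy}{|(z-\alpha)(z-\beta)(z-\gamma)|},
\]
by splitting $\supp\mu_\phi\subseteq\bigcup_{m'}B_{m'}$ into the piece $B_m$ itself and the union of the $B_{m'}$ with $m'\ne m$. On $B_{m'}$ with $m'\ne m$, the separation hypothesis $|z-z'|\ge C_1\sqrt{|zz'|}$ forces $|z-\alpha|,|z-\beta|,|z-\gamma|$ to all be $\gtrsim\sqrt{|z\zeta_m|}$ (since $\alpha,\beta,\gamma$ all lie essentially at $\zeta_m$ up to $O(r_m)$, and $r_m/|\zeta_m|$ is small), so the integrand over $B_{m'}$ is bounded by $\kappa r_m\cdot|z\zeta_m|^{-3/2}$, giving a contribution $\lesssim \kappa r_m |\zeta_m|^{-3/2}\sum_{m'\ne m} r_{m'}^2/|\zeta_{m'}|^{3/2}$, which by Assumption~\ref{ass:qu-estimates}(i)--(ii) is $\lesssim\kappa r_m/|\zeta_m|\le\kappa\delta_1$, a bounded quantity. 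For the contribution of $B_m$ itself, the two factors $|z-\beta|$ and $|z-\gamma|$ can vanish inside $B_m$, but the classical estimate $\iint_{B_m}\frac{dx\,dy}{|z-\beta||z-\gamma|}\lesssim r_m\log(r_m/|\beta-\gamma|)$ (or simply $\lesssim r_m$ when $|\beta-\gamma|$ is comparable to $r_m$) plus the crude bound $|z-\alpha|\ge$ (something); here it is cleanest to choose $\gamma$ so that $|\beta-\gamma|\asymp\kappa r_m\asymp|\beta-\alpha|$, so this inner integral is $\lesssim r_m$, and multiplying by $|\beta-\alpha|/r_m^{?}$... I would be careful: the factor $\frac{1}{|z-\alpha|}$ with $\alpha=\zeta_m$ the center is also singular, so really I should use $\iint_{B_m}\frac{dx\,dy}{|z-\alpha||z-\beta|}\lesssim r_m$ and bound $\frac{1}{|z-\gamma|}$ by its minimum... this does not quite work pointwise. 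The right way is: the product of three inverse-linear singularities each of which can be at most be $O(1/(\text{local coordinate}))$ integrates to $O(r_m^{-1+3}\cdot r_m^{-3}) $— no. Concretely, rescale $z=\zeta_m+r_m u$; then the $B_m$-integral becomes $\kappa\iint_{|u|<1}\frac{du_1\,du_2}{|u-a||u-b||u-c|}$ with $a=0$, $|b|=|c|=\kappa$, $|b-c|\asymp\kappa$, and this integral is $O(1)$ uniformly (a bounded function of $\kappa\in(0,1]$, in fact $O(1/\kappa)$ in the worst case but times the prefactor $\kappa$ it stays bounded; I would verify it is $O(1)$, independent of $\kappa$, by the standard fact that $\iint$ of a product of three such Cauchy kernels over a disc is bounded). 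Hence the whole right-hand side is bounded by a constant $C'=C'(K,C_1,\kappa)$.

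Therefore Corollary~\ref{cor:KeyInequality} gives $\bigl|\log\frac{\phi(\beta)-\phi(\alpha)}{\beta-\alpha}-\log\frac{\phi(\gamma)-\phi(\alpha)}{\gamma-\alpha}\bigr|_{\calC}\le C'$. Now I want to anchor one of these log-ratios. Taking $\gamma$ to be a fixed reference point at distance, say, $\delta_1 r_m$ (or any fixed fraction of $r_m$) from $\zeta_m=\alpha$, I can iterate/telescope or — simpler — apply the estimate once more comparing $(\alpha,\gamma)$ with a point at a definite scale, eventually relating $\log\frac{\phi(\gamma)-\phi(\alpha)}{\gamma-\alpha}$ to $\log\frac{\phi(\zeta)-\phi(0)}{\zeta-0}=\log\frac{\phi(\zeta)}{\zeta}$ type quantities controlled by Lemma~\ref{lem:qc-estimate1}; but in fact I do not even need a precise anchor — I only need that $\log\frac{\phi(\gamma)-\phi(\alpha)}{\gamma-\alpha}$ is bounded for one choice of $\gamma$, which again follows by one more application of Corollary~\ref{cor:KeyInequality} with a third comparison point far from $B_m$ (say at distance $|\zeta_m|$) combined with Lemma~\ref{lem:qc-estimate1}. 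Chaining these bounded increments, $\bigl|\log\frac{\phi(\zeta)-\phi(\zeta_m)}{\zeta-\zeta_m}\bigr|$ is bounded by a constant depending only on $K$, $C_1$, $\kappa$; exponentiating and using $|\zeta-\zeta_m|=\kappa r_m$ yields $\frac{1}{C_2}\kappa r_m\le|\phi(\zeta)-\phi(\zeta_m)|\le C_2\kappa r_m$ with $C_2=C_2(\kappa)$, which is the claim.

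The main obstacle is the bookkeeping for the $B_m$-diagonal term: one must check that $\iint_{|u|<1}|u-a|^{-1}|u-b|^{-1}|u-c|^{-1}\,du$, with $a=0$ and $|b|=|c|=\kappa$ and $|b-c|$ comparable to $\kappa$, is bounded by a constant multiple of $1/\kappa$ (so that after the prefactor $\kappa$ the contribution is $O(1)$, uniform in $m$), and to track the $\kappa$-dependence honestly so that $C_2$ genuinely depends only on $\kappa$ (and $K$, $C_1$) and not on $m$ — this is exactly where the separation hypothesis $|z-z'|\ge C_1\sqrt{|zz'|}$ and Assumption~\ref{ass:qu-estimates}(i) get used to kill the off-diagonal sum. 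The rest is routine application of the Key Inequality and Lemma~\ref{lem:qc-estimate1}.
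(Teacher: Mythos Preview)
Your overall strategy---apply Corollary~\ref{cor:KeyInequality}, split the integral into the diagonal disc $B_m$ and the off-diagonal discs $B_{m'}$, then anchor via Lemma~\ref{lem:qc-estimate1}---is the right one, but your execution takes a detour and contains a real gap in the off-diagonal bound.

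\textbf{The missed simplification.} You take $\alpha=\zeta_m$, $\beta=\zeta$, and $\gamma$ a nearby auxiliary point, planning to anchor afterwards by a further application with a comparison point ``far from $B_m$ (say at distance $|\zeta_m|$)''. But that second step \emph{is} the whole proof. The paper applies Corollary~\ref{cor:KeyInequality} once with $\alpha=\zeta_n$, $\beta=0$, $\gamma=\zeta$: the condition $|\gamma-\alpha|=\kappa r_n\le\delta_1|\zeta_n|=\delta_1|\beta-\alpha|$ holds by Assumption~\ref{ass:qu-estimates}(ii), and since $\phi(0)=0$ the reference ratio is $\log\frac{\phi(\zeta_n)}{\zeta_n}$, already bounded by Lemma~\ref{lem:qc-estimate1}. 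Your first application is redundant.

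\textbf{The gap.} In your first application, you bound the off-diagonal integrand over $B_{m'}$ by applying the separation hypothesis to all three linear factors, getting roughly $\kappa r_m\,|z|^{-3/2}|\zeta_m|^{-3/2}$ and hence a contribution $\lesssim \kappa r_m|\zeta_m|^{-3/2}\sum_{m'\ne m} r_{m'}^2/|\zeta_{m'}|^{3/2}$. This sum need not converge under Assumption~\ref{ass:qu-estimates}: take for instance $|\zeta_{m'}|=4^{m'}$ and $r_{m'}=4^{m'}/{m'}^2$; then $\sum r_{m'}/|\zeta_{m'}|<\infty$ but $r_{m'}^2/|\zeta_{m'}|^{3/2}=4^{m'/2}/{m'}^4\to\infty$. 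So your written bound fails.

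\textbf{Why $\beta=0$ fixes both issues.} With $\beta=0$ the numerator becomes $|\zeta_n|$, and one of the three linear factors is simply $|z|$. On the diagonal $B_n$ one then bounds $|z|\ge\tfrac{3}{4}|\zeta_n|$, cancels with the numerator, and is left with the two-singularity integral $\iint_{\D}\frac{dx'\,dy'}{|z'(z'-\kappa)|}$, which is manifestly finite (this is where the $\kappa$-dependence of $C_2$ enters). Off the diagonal, one uses the separation hypothesis only on the two factors $(z-\zeta)(z-\zeta_n)$, obtaining $\ge C_1^2|z|\sqrt{|\zeta||\zeta_n|}\gtrsim|\zeta_m||\zeta_n|$; the $|\zeta_n|$ cancels with the numerator, and the remaining $\iint_{B_m}|z|^{-1}\,dx\,dy\le 2\pi r_m$ (Lemma~\ref{lem:simple-pole}) yields the summable tail $\sum r_m/|\zeta_m|$. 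This is exactly the mechanism that makes the sum converge, and it is lost when all three of $\alpha,\beta,\gamma$ sit inside $B_m$.
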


\begin{figure}[h!]
%\vspace{10pt}
\def\svgwidth{.8\linewidth}
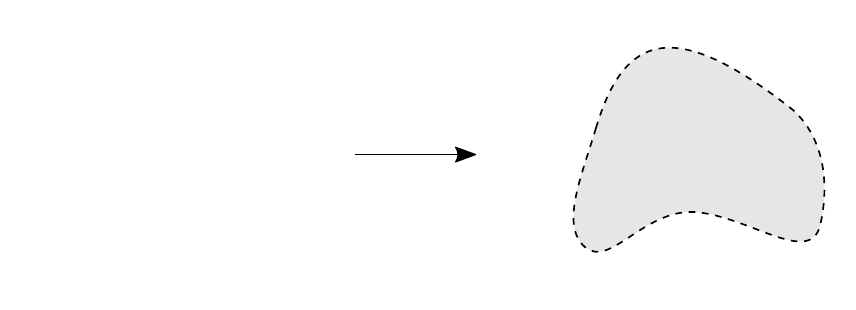
\caption{Sketch of Lemma~\ref{lem:qc-estimate2}.}
\label{fig:qc-estimate2}
\end{figure}

\begin{lem} 
\label{lem:qc-estimate3} 
Suppose that Assumption~\ref{ass:qu-estimates} holds. For every $0<\theta<\redf{\pi}$, there exists $C_3>1$ such that if $\zeta\in\C$ satisfies that 
$$
%B_m \subseteq \{z \in \C\ : \ \arg \zeta + \theta < \arg z < \arg \zeta+2\pi - \theta\} \quad \text{ for all } m\in\N,
\vfive{B_m\cap \{z \in \C\, : \, |\textup{arg}\, z - \textup{arg}\, \zeta|<\theta\}=\emptyset, \quad \text{ for all } m\in\N,}
$$
then
\begin{equation}
\label{eq:qc-estimate3}
\frac{1}{C_3} \le |\phi'(\zeta)| \le C_3.
\end{equation}
\blue{For every $\eta>0$, there exists $M_2=M_2(\eta)\in\N$ such that if $\supp\mu_\phi\subseteq \sum_{m=M_2}^\infty B_m$, then $1<C_3<1+\eta$.}
\end{lem}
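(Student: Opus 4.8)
The plan is to deduce the pointwise bound on $|\phi'(\zeta)|$ from the control of $|\log(\phi(\zeta)/\zeta)|_{\calC}$ (Lemma~\ref{lem:qc-estimate1} and its proof) by applying Corollary~\ref{cor:KeyInequality} in a limiting form, the point being that under the sector hypothesis every $B_m$ is far from $\zeta$ in the scale-invariant sense. It suffices to treat $0<\theta<\pi$, since the hypothesis for angle $\theta\ge\pi$ implies it for any smaller angle. Set $\Sigma_\theta(\zeta):=\{z\ne 0:|\arg z-\arg\zeta|<\theta\}$ and $c_\theta:=\sin(\min\{\theta,\pi/2\})\in(0,1]$. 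The law of cosines gives $|z-\zeta|\ge c_\theta\max\{|z|,|\zeta|\}$ whenever $z\notin\Sigma_\theta(\zeta)$; hence, for $\zeta$ satisfying the hypothesis, every $z\in\supp\mu_\phi$ has $|z-\zeta|\ge c_\theta|\zeta|>0$, so $\phi$ is conformal on $\D(\zeta,c_\theta|\zeta|)$ and, being injective there, $\phi'(\zeta)\ne 0$, so $\log\phi'(\zeta)$ is defined modulo $2\pi i$. I would then apply Corollary~\ref{cor:KeyInequality} with $\alpha=\zeta$, $\beta=0$ and $\gamma\to\zeta$: for $\gamma$ close to $\zeta$ condition \eqref{eq:key-ineq-condition-abc} holds, and since $|z-\gamma|\ge\tfrac12|z-\zeta|$ on $\supp\mu_\phi$ once $|\gamma-\zeta|<\tfrac12 c_\theta|\zeta|$, dominated convergence (dominating function $2|\zeta|/(|z|\,|z-\zeta|^2)$) and $\tfrac{\phi(0)-\phi(\zeta)}{0-\zeta}=\tfrac{\phi(\zeta)}{\zeta}$, $\tfrac{\phi(\gamma)-\phi(\zeta)}{\gamma-\zeta}\to\phi'(\zeta)$ yield
$$
\left|\log\frac{\phi(\zeta)}{\zeta}-\log\phi'(\zeta)\right|_{\calC}\le C(K-1)\iint_{\supp\mu_\phi}\frac{|\zeta|\,dx\,dy}{|z|\,|z-\zeta|^2}.
$$

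Next I would estimate this integral. On each $B_m$ one has $\tfrac34|\zeta_m|\le|z|\le\tfrac54|\zeta_m|$ and $\Area B_m=\pi r_m^2$, and $\sum_m(r_m/|\zeta_m|)^2\le\tfrac14\sum_m r_m/|\zeta_m|<+\infty$ by Assumption~\ref{ass:qu-estimates}. Using $|z-\zeta|\ge c_\theta\max\{|z|,|\zeta|\}$ and estimating $\iint_{B_m}$ term by term, distinguishing $|\zeta|\le\tfrac34|\zeta_m|$ from the reverse, each term is at most $\tfrac43(r_m/|\zeta_m|)^2$, so
$$
\iint_{\supp\mu_\phi}\frac{|\zeta|\,dx\,dy}{|z|\,|z-\zeta|^2}\le\frac{16\pi}{9c_\theta^2}\sum_m\Bigl(\frac{r_m}{|\zeta_m|}\Bigr)^2=:A_\theta<+\infty,
$$
with $A_\theta$ independent of $\zeta$; moreover, if $\supp\mu_\phi\subseteq\bigcup_{m\ge M}B_m$, the bound becomes a tail sum tending to $0$ as $M\to\infty$.

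It then remains to bound $|\log(\phi(\zeta)/\zeta)|_{\calC}$ for $\zeta$ with the sector property. For $|\zeta|\ge 1/\delta_1$ I would apply Corollary~\ref{cor:KeyInequality} with $\alpha=0$, $\beta=\zeta$, $\gamma=1$, using $\phi(0)=0$, $\phi(1)=1$, and the bounds $|z-1|\ge\tfrac23|z|$ (since $|z|\ge 3$) and $|z-\zeta|\ge c_\theta\max\{|z|,|\zeta|\}$, which again control the integral by a constant times $\sum_m(r_m/|\zeta_m|)^2$. For $|\zeta|<1/\delta_1$ I would compare $\phi(\zeta)/\zeta$ with $\phi(\zeta_0)/\zeta_0$, where $\zeta_0$ is on the ray through $\zeta$ with $|\zeta_0|\ge 1/\delta_1$ and $|\zeta|\le\delta_1|\zeta_0|$ (Corollary with $\alpha=0$, $\beta=\zeta_0$, $\gamma=\zeta$); here the comparison integral is bounded since $|z-\zeta_0|\ge c_\theta|\zeta_0|$ and $|z-\zeta|\ge c_\theta|z|$ on $\supp\mu_\phi$, reducing to the previous case. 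This gives $|\log(\phi(\zeta)/\zeta)|_{\calC}\le B_\theta<+\infty$ for all admissible $\zeta$, so by the triangle inequality and $\bigl|\log|w|\bigr|\le|\log w|_{\calC}$,
$$
\bigl|\log|\phi'(\zeta)|\bigr|\le\left|\log\frac{\phi(\zeta)}{\zeta}\right|_{\calC}+C(K-1)A_\theta\le B_\theta+C(K-1)A_\theta=:\log C_3,
$$
which is the first assertion. For the last assertion, given $\eta>0$: Lemma~\ref{lem:qc-estimate1} provides $M_1$ with $|\log(\phi(\zeta)/\zeta)|_{\calC}<\tfrac12\log(1+\eta)$ for all $\zeta\ne0$ once $\supp\mu_\phi\subseteq\bigcup_{m\ge M_1}B_m$, while the estimate above provides $M'$ with $C(K-1)A_\theta<\tfrac12\log(1+\eta)$ once $\supp\mu_\phi\subseteq\bigcup_{m\ge M'}B_m$; taking $M_2:=\max\{M_1,M'\}$ gives $\bigl|\log|\phi'(\zeta)|\bigr|<\log(1+\eta)$, so $C_3=1+\eta$ works.

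The main obstacle is the \emph{uniformity in $\zeta$} of the integral estimate $\iint_{\supp\mu_\phi}|\zeta|\,|z|^{-1}|z-\zeta|^{-2}\,dx\,dy\le A_\theta$: this is precisely where the sector hypothesis is indispensable, as it converts membership of $z$ in $\supp\mu_\phi$ into the scale-invariant separation $|z-\zeta|\ge c_\theta\max\{|z|,|\zeta|\}$, which together with the finiteness of $\sum_m(r_m/|\zeta_m|)^2$ from Assumption~\ref{ass:qu-estimates} makes all the relevant Cauchy-type kernels integrable with a $\zeta$-independent bound. By comparison, justifying the limit $\gamma\to\zeta$ in Corollary~\ref{cor:KeyInequality} and disposing of the bounded range $|\zeta|<1/\delta_1$ are routine.
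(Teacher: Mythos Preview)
Your proof is correct and follows essentially the same route as the paper: apply Corollary~\ref{cor:KeyInequality} with $\alpha=\zeta$, $\beta=0$, and $\gamma\to\zeta$, use the angular separation to bound $|z-\zeta|$ from below (the paper writes this as $|z-\zeta|^2\ge 2|z||\zeta|(1-\cos\theta)$, your $c_\theta$ version is equivalent), pass to the limit to control $\bigl|\log\phi'(\zeta)-\log(\phi(\zeta)/\zeta)\bigr|_{\calC}$, and then combine with Lemma~\ref{lem:qc-estimate1}. The only difference is that you re-derive a bound $B_\theta$ on $|\log(\phi(\zeta)/\zeta)|_{\calC}$ using the sector hypothesis again, whereas the paper simply invokes Lemma~\ref{lem:qc-estimate1} with $\eps$ large enough that $M_1(\eps)=1$, which already bounds this quantity for \emph{all} $\zeta\ne 0$ with no further argument.
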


\begin{figure}[h!]
%\vspace{10pt}
\def\svgwidth{.5\linewidth}
\input{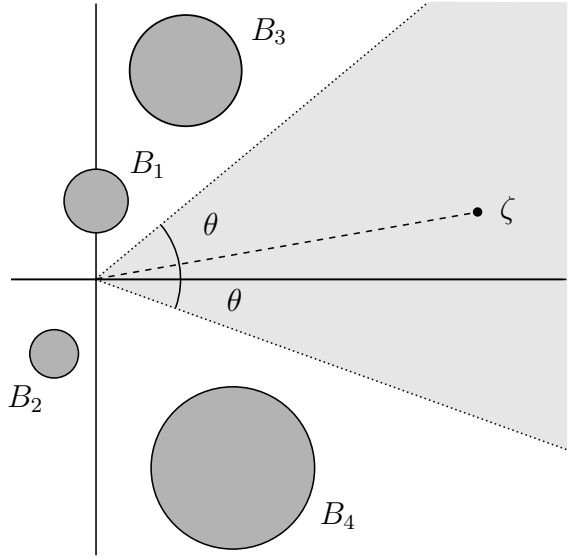}
\caption{Sketch of Lemma~\ref{lem:qc-estimate3}.}
\label{fig:qc-estimate3}
\end{figure}

%We also use the following lemma.  
%\begin{lem} 
%\label{lem:Koebe-estimate} \margin{remove\\ completely?}
%For any $R>1$ and $\eps>0$, there exists $R'>R$ such that if $\phi$ is conformal in $\D_{R'}$ 
%and $\phi(0)=0$, $\phi(1)=1$,  then 
%\begin{equation}
%|\phi(z)-z| < \eps \quad \text{ and } \quad \bigl| \log |\phi'(z)| \bigr| < \eps\quad \text{for all } z\in\D_R.
%\end{equation}  
%\end{lem}
%\begin{proof} 
%Consider the conformal map $\tilde{\phi}:\D \to \C$ given by
%$$
%\tilde{\phi}(z):=\frac{1}{R'|\phi'(0)|}\phi(R'z)
%$$
%that satisfies that $\tilde{\phi}(0)=0$ and $\tilde{\phi}'(0)=1$. It follows from the Koebe distortion theorem~\cite[Theorem~1.6]{pommerenke75} that for $z\in\D_{R'}$,
%$$
%|\phi(z)-z|=\left|\tilde{\phi}\left(\frac{z}{R'}\right)-z\right|
%$$
%\red{This follows from Koebe distortion theorem.}
%\end{proof}

\section{Definition of the domains $\{U_n\}_n$ and the centers $\{c_n\}_n$}
\label{sec:domains}

Let $g_{\mathbf{w}}$, $\phi_{\mathbf{w}}$, and $f_{\mathbf{w}} = g_{\mathbf{w}} \circ \phi_{\mathbf{w}}^{-1}$ be as in Section \ref{sec:interpolation}, 
where $N\in\N$ is to be determined in this section. For simplicity, we may omit $\mathbf{w}$ in the notation, that is, we write $g,\phi,f$ instead of $g_{\mathbf{w}},\phi_{\mathbf{w}},f_{\mathbf{w}}$, respectively. 

We are going to introduce two collections of domains $U_{n,j}$ and $\widehat{U}_{n,j}=\phi^{-1}(U_{n,j})$, for $n \geqslant N$ and $0<j\leqslant n$, as in the following diagram. We shall think about the quasiconformal map $\phi$ as a different coordinate. The goal of this section is to define the sets $U_n=U_{n,0}$, for $n\in \N$.

\newcommand{\upin}{\mathrel{\rotatebox[origin=c]{90}{$\in$}}}
\newcommand{\upsubseteq}{\mathrel{\rotatebox[origin=c]{90}{$\subseteq$}}}
\newcommand{\upeq}{\mathrel{\rotatebox[origin=c]{90}{$=$}}}

%: the diagram
%%%%%%%%%%%%%%%%%%%%
\begin{center}
\label{diag:1}
\hspace*{-43pt}
\begin{tikzcd}[row sep=1.2em, column sep=1.4em]
\D(\tfrac{1}{2}, \tfrac{1}{8}) &[3em] Q_M \lar[swap]{(\phi \,\circ\, g^{-1})^{M} \circ\, \phi} \rar{g} 
& g(Q_M) 
& Q_{M+1} \lar[swap]{\phi} \rar{g} & \cdots \rar{g} %long version
%& \cdots \lar[swap]{\phi} \rar{g} %short version
& g(Q_{j-1}) & Q_j \lar[swap]{\phi} \rar{g} & g(Q_j) & \cdots \lar[swap]{\phi} 
\\
U_n \rar{\phi^{-1} \circ \,f^M} \arrow[u, phantom, "\upsubseteq" description]  
& \widehat{U}_{n,M}  \rar{g} \arrow[u, phantom, "\upsubseteq" description]  
& U_{n,M+1} \arrow[u, phantom, "\upsubseteq" description] 
& \widehat{U}_{n,M+1} \lar[swap]{\phi} \rar{g} \arrow[u, phantom, "\upsubseteq" description]  & \cdots  \rar{g} %long version
%& \cdots \lar[swap]{\phi} \rar{g} %short version
& U_{n,j} \arrow[u, phantom, "\upsubseteq" description]  
& \widehat{U}_{n,j} \lar[swap]{\phi} \rar{g} \arrow[u, phantom, "\upsubseteq" description]  
& U_{n,j+1} \arrow[u, phantom, "\upsubseteq" description]  
& \cdots \lar[swap]{\phi} 
\\
c_n \arrow[r, mapsto] \arrow[u, phantom, "\upin" description]  
& \hat{c}_{n,M} \ar[r,mapsto] \arrow[u, phantom, "\upin" description]  
& c_{n,M+1} \arrow[u, phantom, "\upin" description] \ar[r,mapsto]
& \hat{c}_{n,M+1}  \ar[r,mapsto] \arrow[u, phantom, "\upin" description]  & \cdots  \ar[r,mapsto]%long version
%& \cdots \lar[swap]{\phi} \rar{g} %short version
& c_{n,j} \arrow[u, phantom, "\upin" description]  \ar[r,mapsto]
& \hat{c}_{n,j}  \ar[r,mapsto] \arrow[u, phantom, "\upin" description]  
& c_{n,j+1} \arrow[u, phantom, "\upin" description]  \ar[r,mapsto]
& \cdots  
\end{tikzcd}
%\]
\vspace{3pt}
\vskip 1em
%\begin{equation}
\hspace*{-27pt}
\begin{tikzcd}[row sep=1.2em, column sep=1.4em]
  \cdots & Q_{n-1} \lar[swap]{\phi} \rar{g} 
& g(Q_{n-1}) & Q_n \lar[swap]{\phi} \rar{g} & g(Q_n) \arrow[r, phantom, "\supset" description]  
&[-2em] \phi(\tfrac{1}{2}D_n) & \tfrac{1}{2}D_n  \lar[swap]{\phi} \rar{g} 
&\D(w_n, (\tfrac{1}{2})^{2d_n})
\\
 \cdots & \widehat{U}_{n,n-1} \lar[swap]{\phi} \rar{g}  \arrow[u, phantom, "\upsubseteq" description] 
& U_{n,n} \arrow[u, phantom, "\upsubseteq" description]  
& \widehat{U}_{n,n} \lar[swap]{\phi} \arrow[rr,"g"] \arrow[u, phantom, "\upsubseteq" description]  
& & \D(\phi(ih_n), R_n') \arrow[u, phantom, "\upsubseteq" description] \arrow[rr, "f"] & & \D(w_n, (\tfrac{1}{2})^{2d_n}) \arrow[u, phantom, "\upeq" description]  
\\
 \cdots\ar[r,mapsto] & \hat{c}_{n,n-1}  \ar[r,mapsto]  \arrow[u, phantom, "\upin" description] 
& c_{n,n} \arrow[u, phantom, "\upin" description]  \ar[r,mapsto]
& \hat{c}_{n,n} \arrow[rr,mapsto] \arrow[u, phantom, "\upin" description]  
& &\phi(ih_n) \arrow[u, phantom, "\upin" description] \arrow[rr, mapsto] & & w_n \arrow[u, phantom, "\upin" description]  
\end{tikzcd}
%\end{equation}
\vspace{10pt}\\
\textsc{Diagram 1.} The domains $U_{n,j}$ and $\widehat{U}_{n,j}$, for $n \geqslant N$ and $0\leqslant j\leqslant n$. The right hand side of the top part continues in the left hand side of the bottom part. 
\vspace{10pt}
\end{center}

In this section we use the results from Section~\ref{sec:qc-estimate} with a particular choice of discs $\{B_m\}_m$ and quasiconformal map $\phi$. So we need to check that Assumption~\ref{ass:qu-estimates} holds. The next lemma follows from Definition~\ref{dfn:fw-phiw} and the property \eqref{eq:growth-estimate-finitesum} that we gave in Lemma~\ref{lem:growth-estimates}.

\begin{lem}
\label{lem:assumption}
\blue{There is $N\geqslant 3$ sufficiently large so that if we define $\{B_m\}_{m\in\N}$ by reindexing the discs $\{3D_n\}_{n\in\Z_N}$ and let $\phi=\phi_{\mathbf{w}}$ be the $K$-quasiconformal map from Definition~\ref{dfn:fw-phiw}, then the Assumption~\ref{ass:qu-estimates} holds.}
\end{lem}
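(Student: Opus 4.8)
The plan is to verify the three conditions of Assumption~\ref{ass:qu-estimates} one at a time for the discs $\{3D_n\}_{n\in\Z_N}$ (reindexed as $\{B_m\}_{m\in\N}$) and the quasiconformal map $\phi_\mathbf{w}$. Recall that $3D_{\pm n}=\D(\pm ih_n, 3R_n)$, so the centers are $\zeta=\pm ih_n$ with $|\zeta|=h_n$ and the radii are $r=3R_n$. First I would establish condition (iii): by Definition~\ref{dfn:fw-phiw} the map $\phi=\phi_\mathbf{w}$ is $K$-quasiconformal with $\phi(0)=0$, $\phi(1)=1$, and by Definition~\ref{dfn:gw} we have $\supp\mu_{g_\mathbf{w}}\subseteq\bigcup_{n\in\Z_N}E_n$; since the remark following Lemma~\ref{lem:growth-estimates} notes that $E_{\pm n}\subseteq\D(\pm ih_n,3R_n)=3D_{\pm n}$ (because $E_{\pm n}$ is a square of half-side $2d_n\pi$ centered at $\pm ih_n$ and $3R_n=3(d_n-\tfrac13)\pi=(3d_n-1)\pi>2\sqrt2\,d_n\pi$ for the relevant $d_n$), we get $\supp\mu_{\phi_\mathbf{w}}=\mu_{g_\mathbf{w}}\subseteq\bigcup_{m}B_m$, independently of $N$.

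Next I would check condition (ii), which is where $N$ enters. We need $|\zeta_m|\ge 4$ and $r_m/|\zeta_m|\le\min\{\tfrac14,\delta_1\}$ for all $m$. Since $h_n$ is increasing in $n$ and $h_3>0$, choosing $N$ large enough forces $h_n\ge h_N\ge 4$ for all $n\ge N$. For the ratio bound, we have $r_m/|\zeta_m|=3R_n/h_n$, and by the inequality $R_n/h_n<2\pi/(n!)^2$ from Lemma~\ref{lem:growth-estimates} (valid for $n\ge 3$), we get $3R_n/h_n<6\pi/(n!)^2$, which tends to $0$; hence for $N$ sufficiently large $3R_n/h_n\le\min\{\tfrac14,\delta_1\}$ for all $n\ge N$. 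Finally, condition (i): $\sum_m r_m/|\zeta_m|=2\sum_{n\ge N}3R_n/h_n\le 6\sum_{n=1}^\infty R_n/h_n<+\infty$ by the convergence \eqref{eq:growth-estimate-finitesum} in Lemma~\ref{lem:growth-estimates}. So taking $N$ large enough to simultaneously satisfy the two requirements in (ii) completes the proof.

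I do not expect any genuine obstacle here — this is a bookkeeping lemma that collects facts already proved. The only point requiring a small amount of care is confirming the geometric inclusion $E_{\pm n}\subseteq 3D_{\pm n}$, i.e.\ that the square $E_{\pm n}$ of half-side $2d_n\pi$ is contained in the disc of radius $3R_n=(3d_n-1)\pi$ about its center; since the circumradius of the square is $2\sqrt2\,d_n\pi\approx 2.83\,d_n\pi$ and $3d_n-1\ge 2.83\,d_n$ holds for $d_n\ge 6$ (and $d_n=\lfloor x_{n+1}/x_n\rfloor$ is enormous for $n\ge N$ by Lemma~\ref{lem:growth-estimates}), this is immediate for $N$ large. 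Everything else is a direct invocation of Lemma~\ref{lem:growth-estimates} and Definition~\ref{dfn:fw-phiw}.
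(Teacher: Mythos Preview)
Your proposal is correct and takes essentially the same approach as the paper, which merely states that the lemma follows from Definition~\ref{dfn:fw-phiw} and property~\eqref{eq:growth-estimate-finitesum} of Lemma~\ref{lem:growth-estimates}; you have simply filled in the routine verifications of the three conditions in Assumption~\ref{ass:qu-estimates}. The only minor slip is the typo ``$\supp\mu_{\phi_\mathbf{w}}=\mu_{g_\mathbf{w}}$'' (you mean $\supp\mu_{\phi_\mathbf{w}}=\supp\mu_{g_\mathbf{w}}$), and your remark that condition~(iii) holds ``independently of $N$'' is slightly imprecise since the inclusion $E_{\pm n}\subseteq 3D_{\pm n}$ needs $d_n\ge 6$, but this already holds for all $n\ge 3$ by the growth estimates.
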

%\begin{proof}
%Since $h_n\geqslant x_{n+1}-\pi\geqslant x_2-\pi >6.5$ for all $n\in\N$, for any $N_0\geqslant 0$, we have $|\zeta_m|=h_{N_0+m}>4$ for all $m\in\N$. On the other hand, $d_n\leqslant x_{n+1}/x_n$, so
%$$
%\frac{r_m}{|\zeta_m|}=\frac{3\pi d_{N_0+m}}{h_{N_0+m}}\leqslant \frac{x_{N_0+m+1}}{(x_{N_0+m+1}-\pi)x_{N_0+m}}<\min\{\tfrac{1}{4},\delta_1\}
%$$
%provided that $N_0\in\N$ is sufficiently large. Part (ii) follows from Lemma~\ref{lem:growth-estimates}, and part (iii) is satisfied by definition.
%\end{proof}

\blue{Recall that in Section~\ref{sec:setup} we considered $M\geqslant 3$, from this point we fix $M=3$.} The next two lemmas bound the distortion of the map $\phi$ on some sets by supposing that the constant $N\in\N$ in Definition~\ref{dfn:gw} is sufficiently large. The first lemma concerns the initial iterates~$f^n$ of $f=g\circ \phi^{-1}$, namely for $n\leqslant M$, while the second lemma deals with the case $n>M$.

\begin{lem}
\label{lem:initial-iterates} 
If \blue{$N\geqslant 3$} in Definition~\ref{dfn:gw} is large enough, 
then %$\phi_{\mathbf{w}}$ is close to the identity up to $Q_M$ and 
$(\phi \circ g^{-1})^M \circ \phi$ maps the rectangle $Q_M$ into the disc $\D(\tfrac{1}{2}, \tfrac{1}{8})$.  
\end{lem}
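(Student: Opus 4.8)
The plan is to show that the composition $(\phi\circ g^{-1})^M\circ\phi$ maps $Q_M$ into $\D(\tfrac12,\tfrac18)$ by comparing it, iterate by iterate, with the \emph{unperturbed} map $g^{-M}$, which by Lemma~\ref{lem:initial-segment} already sends $\overline{Q}_M$ into $\D(\tfrac12,\tfrac1{16})$. The key observation is that since $M=3$ is fixed, we deal with only finitely many iterates, and each application of $\phi$ or of an inverse branch of $g$ is a uniformly continuous operation on the relevant compact sets; so it suffices to make the quasiconformal map $\phi=\phi_\mathbf{w}$ close enough to the identity on the relevant region, which is achieved by taking $N$ large so that the support of $\mu_\phi$ is pushed far out to infinity.

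Concretely, first I would locate all the sets in play: $Q_M=Q(x_M)\subseteq S_+$ and its finitely many backward images $g^{-1}(Q_M), \dots, g^{-M}(Q_M)$, all of which lie in $S_+$ and are bounded (contained, say, in a fixed compact $\mathcal K\subseteq S_+$ disjoint from a neighbourhood of $(-\infty,v_M]$). By Lemma~\ref{lem:assumption} and Lemma~\ref{lem:qc-estimate1}, for every $\eps>0$ we may choose $N$ so large that $\bigl|\log\tfrac{\phi(\zeta)}{\zeta}\bigr|_{\calC}<\eps$ for all $\zeta\ne 0$; combined with Lemma~\ref{lem:qc-estimate3} (taking $\theta$ appropriate, noting $\mathcal K$ stays in a sector bounded away from where the $B_m$'s accumulate, i.e.\ away from the imaginary axis) we also get $\bigl||\phi'(\zeta)|-1\bigr|$ small on $\mathcal K$, hence $\phi$ is uniformly $C^1$-close to the identity on $\mathcal K$. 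Moreover $\phi$ is actually conformal on a neighbourhood of $\mathcal K$ once $N$ is large (the squares $E_n$, equivalently the discs $3D_n\supseteq E_n$, move off to infinity), so on $\mathcal K$ one may even use Koebe distortion directly rather than the quasiconformal estimates.

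Then I would run the induction. Write $\Phi_0=\phi$ and $\Phi_{k}=\phi\circ g^{-1}\circ\Phi_{k-1}$, so the claim is $\Phi_M(Q_M)\subseteq\D(\tfrac12,\tfrac18)$. On the unperturbed side set $\Psi_0=\mathrm{id}$, $\Psi_k=g^{-1}\circ\Psi_{k-1}=g^{-k}$. Using that each inverse branch $g^{-1}$ is uniformly continuous (indeed, uniformly Lipschitz via $|(g^{-1})'|$ bounded) on a neighbourhood of each $g^{-j}(Q_M)$, and that $\phi$ differs from the identity by at most $\eps\cdot(\text{diam})$ there, an easy induction shows $\sup_{z\in Q_M}|\Phi_k(z)-\Psi_k(z)|\le C\eps$ for a constant $C=C(M)$ depending only on these Lipschitz bounds (not on $\mathbf w$ or $N$). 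Taking $\eps$ small enough that $C\eps<\tfrac1{16}$ and invoking $g^{-M}(\overline Q_M)\subseteq\D(\tfrac12,\tfrac1{16})$ from Lemma~\ref{lem:initial-segment} gives $\Phi_M(Q_M)\subseteq\D(\tfrac12,\tfrac1{16}+\tfrac1{16})=\D(\tfrac12,\tfrac18)$, as desired.

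The main obstacle is bookkeeping rather than any deep point: one must check that all intermediate sets $\Phi_k(Q_M)$ (not just the unperturbed $\Psi_k(Q_M)$) stay inside the domains of definition of the next $g^{-1}$ and of $\phi$ — i.e.\ away from the branch cut $(-\infty,v_M]$ and, for the distortion estimate, away from the region where $\mu_\phi$ is supported — so that the inductive step is legitimate. This is handled by noting that the unperturbed sets $\Psi_k(Q_M)$ sit in the interior of $S_+$ with definite distance to the branch cut and to $\pi i\Z$ (hence to $\bigcup_n 3D_n$), and for $\eps$ small the perturbed sets stay within that definite distance; one simply has to choose $\eps$ (and then $N$) at the end, once all the finitely many constants from the $M$ steps are fixed. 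I expect no genuine difficulty beyond organising this.
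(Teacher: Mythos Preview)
Your proposal is correct and follows essentially the same approach as the paper: compare $(\phi\circ g^{-1})^M\circ\phi$ with the unperturbed $g^{-M}$ using Lemma~\ref{lem:initial-segment}, and invoke Lemma~\ref{lem:qc-estimate1} to make $\phi$ close to the identity on the finitely many intermediate sets. The paper's own proof is a two-line sketch of exactly this idea; your version fleshes out the induction and the bookkeeping (domains of definition, staying away from the branch cut) that the paper leaves implicit.
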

\begin{proof} 
When $\phi=id$, $((\phi \circ g^{-1})^M \circ \phi)(Q_M) = g^{-M}(Q_M) \subset\D(\tfrac{1}{2}, \tfrac{1}{16})$ by Lemma \ref{lem:initial-segment}. %The map $\phi$ is conformal in $\C\setminus \bigcup_{|n|\geqslant N}E_n$, so if $N\in\N$ is large, then $\phi$ is conformal in a large domain and, by Lemma~\ref{lem:Koebe-estimate}, 
By Lemma~\ref{lem:qc-estimate1}, we can choose $N\in\N$ large enough so that the map $\phi$ is sufficiently close to the identity on the finite collection of sets $(g^{-1} \circ \phi)^j (Q_M)$ for $0\leqslant j\leqslant M$.  
So the assertion follows.  
\end{proof}

\begin{lem} 
\label{lem:phi(Q)-in-g(Q)} 
If \blue{$N\ge 3$} in Definition~\ref{dfn:gw} is large enough, 
then $\phi(Q_{j+1}) \subset g(Q_j)$ for $j\ge M$, and $\phi(D_{\pm n}) \subset g(Q_n)$ for $n\geqslant N$.  
\end{lem}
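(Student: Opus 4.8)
The plan is to verify each of the two inclusions using the estimates from Section~\ref{sec:qc-estimate}, relying on Lemma~\ref{lem:cosh}\,(iv) and the growth estimates in Lemma~\ref{lem:growth-estimates}. For the first inclusion, $\phi(Q_{j+1})\subset g(Q_j)$ for $j\ge M$, recall that $Q_{j+1}=Q(x_{j+1})$ is the rectangle of half-width $1$ and half-height $\pi$ around $x_{j+1}$, and by Lemma~\ref{lem:cosh}\,(iv), $g(Q_j)\supseteq\{z:\tfrac12 e^{x_j}<|z|<2e^{x_j}\}\setminus\R_-$ once $x_j\ge x_*=\tfrac53$. Since $x_{j+1}=g(x_j)=2\cosh x_j\sim e^{x_j}$, the rectangle $Q_{j+1}$ sits comfortably inside the annular region $\{\tfrac12 e^{x_j}<|z|<2e^{x_j}\}$ away from $\R_-$, with definite room to spare (in the logarithmic scale the gap is bounded below). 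Now apply Lemma~\ref{lem:qc-estimate1}: choosing $N$ large forces $\left|\log\frac{\phi(\zeta)}{\zeta}\right|_{\calC}<\eps$ for all $\zeta$, hence $\phi(Q_{j+1})$ is contained in a slightly enlarged copy of $Q_{j+1}$ (enlarged by a factor $e^\eps$ radially and by an angle $\eps$), which for small enough $\eps$ still lies inside $g(Q_j)$. The only subtlety is that this must hold \emph{uniformly in $j\ge M$}; but since the margin between $Q_{j+1}$ and $\partial g(Q_j)$ is bounded below uniformly in $j$ (all distances measured multiplicatively), a single choice of $\eps$, and hence of $N$, works for all $j$ simultaneously.

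For the second inclusion, $\phi(D_{\pm n})\subset g(Q_n)$ for $n\ge N$, note that $D_{\pm n}=\D(\pm ih_n,R_n)$ with $R_n=(d_n-\tfrac13)\pi$ and $h_n\approx x_{n+1}$, while $d_n=\lfloor x_{n+1}/x_n\rfloor\approx x_{n+1}/x_n$, so $R_n\approx\pi x_{n+1}/x_n$ and $R_n/h_n<2\pi/(n!)^2$ by Lemma~\ref{lem:growth-estimates}, i.e.\ $D_{\pm n}$ is a very small disc (relative to its distance from the origin) centered high up the imaginary axis at height $h_n\approx x_{n+1}$. Again by Lemma~\ref{lem:cosh}\,(iv), $g(Q_n)\supseteq\{z:\tfrac12 e^{x_n}<|z|<2e^{x_n}\}\setminus\R_-$, and since $h_n=2\pi\lfloor(x_{n+1}+\pi)/2\pi\rfloor$ satisfies $x_{n+1}-\pi\le h_n\le x_{n+1}+\pi$ with $x_{n+1}=2\cosh x_n$, we have $\tfrac12 e^{x_n}<h_n-R_n$ and $h_n+R_n<2e^{x_n}$ for $n$ large (here one uses $R_n/h_n\to 0$ and $e^{x_n}\le x_{n+1}\le 2e^{x_n}$ with room to spare); moreover $D_{\pm n}$ is far from $\R_-$ since it is centered on the imaginary axis at large height with small radius. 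Thus $D_{\pm n}$ itself lies inside $g(Q_n)$, and then Lemma~\ref{lem:qc-estimate1} again gives $\phi(D_{\pm n})\subset e^\eps$-dilate of $D_{\pm n}$, which remains inside $g(Q_n)$ for $\eps$ small; since $R_n/h_n\to 0$ the required margin only improves with $n$, so a uniform choice of $N$ suffices.

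I expect the main obstacle to be purely bookkeeping: making the two chains of elementary inequalities — comparing $x_{j+1}$, $h_n$, $R_n$, $e^{x_j}$, $e^{x_n}$ — precise enough that a single $\eps$ (and hence $N$) controls \emph{all} of $j\ge M$ and \emph{all} of $n\ge N$ at once, and combining this with the conclusion of Lemma~\ref{lem:qc-estimate1}, which itself requires $\supp\mu_\phi\subseteq\bigcup_{m\ge M_1(\eps)}B_m$, i.e.\ requires taking $N$ large a second time. Concretely, one first fixes $\eps$ small enough that an $e^\eps$-radial dilation together with an $\eps$-angular rotation of each $Q_{j+1}$ (resp.\ each $D_{\pm n}$) stays inside $g(Q_j)$ (resp.\ $g(Q_n)$) — this is a statement about the base map $g$ only and uses Lemma~\ref{lem:cosh}\,(iv) plus Lemma~\ref{lem:growth-estimates} — and then one invokes Lemma~\ref{lem:qc-estimate1} to pick $N\ge M_1(\eps)$ so that $\phi=\phi_\mathbf{w}$ satisfies $\left|\log\frac{\phi(\zeta)}{\zeta}\right|_\calC<\eps$ everywhere. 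No delicate estimate on $\phi$ beyond Lemma~\ref{lem:qc-estimate1} is needed; everything else is the geometry of $\cosh$.
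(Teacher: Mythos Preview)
Your proposal is correct and follows essentially the same approach as the paper's proof: both use Lemma~\ref{lem:cosh}\,(iv) to describe $g(Q_j)$ as containing the slit annulus $\{\tfrac12 e^{x_j}<|z|<2e^{x_j}\}\setminus\R_-$, check by elementary inequalities that $Q_{j+1}$ (resp.\ $D_{\pm n}$) lies in this region with uniform multiplicative and angular margin, and then invoke Lemma~\ref{lem:qc-estimate1} to choose $N$ large enough that $\phi$ moves points by at most a factor $e^\eps$ radially and an angle $\eps$. The paper simply makes your ``sufficiently small $\eps$'' explicit by taking $\eps=\log\tfrac32$ and writing out the resulting inequalities; otherwise the arguments are the same.
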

\begin{proof} 
Since $Q_{j+1}\subseteq\{z\in\C\, :\, x_{j+1}-1\leqslant |z|\leqslant x_{j+1}+1+\pi\}$, it follows from Lemma~\ref{lem:qc-estimate1} that, for every $\varepsilon>0$,  there exists $M_1=M_1(\eps)\in\N$ such that if $\supp\mu_\phi\subseteq \bigcup_{m\geqslant M_1} B_m$, then 
$$
\phi(Q_{j+1})\subseteq \{z\in\C\, :\, e^{-\eps}(x_{j+1}-1)\leqslant |z|\leqslant e^\eps (x_{j+1}+1+\pi) \},
$$
for all $j\geqslant M$. On the other hand, since $x_j\geqslant x_M>x_*=\tfrac{5}{3}$ for $j\geqslant M$ by Lemma~\ref{lem:escaping-real-orbits}, Lemma \ref{lem:cosh} (iv) implies that
$$
g(Q_j)\supseteq \{z\in\C\, :\, \tfrac{1}{2}e^{x_j}<|z|<2e^{x_j}\}\setminus \R_-
$$
for $j\geqslant M$. Setting $\eps:=\log\tfrac{3}{2}<\tfrac{1}{2}$ we obtain that
$$
e^\eps< \frac{2(e^{x_j}+e^{-x_j}-1)}{e^{x_j}} \quad\text{ and }\quad e^\eps <\frac{2e^{x_j}}{e^{x_{j}}+e^{-x_j}+1+\pi}
$$
and, \blue{for $N$ sufficiently large}, we have \blue{$\phi(Q_{j+1}) \subset g(Q_j) \cup \R_-$} for \blue{$j\geqslant M$}. \blue{To show that $\phi(Q_{j+1})\cap \R_-=\emptyset$, observe that \mbox{$Q_{j+1}\subseteq \{z\in\C\,:\, |\text{arg}\,z|<\tfrac{\pi}{6}\}$}, and hence by Lemma~\ref{lem:qc-estimate1}, \red{$\phi(Q_{j+1})\subseteq  \{z\in\C\,:\, |\text{arg}\,z|<\tfrac{\pi}{2}\}$} for all $j\geqslant M$.}

Similarly, we have $D_{\pm n}=\D(\pm ih_n,R_n)\subseteq \{z\in\C\, :\, h_n-R_n\leqslant |z|\leqslant h_n+R_n \}$ for $n\geqslant N$, and, by Lemma~\ref{lem:qc-estimate1},
$$
\phi(D_{\pm n})\subseteq \left\{z\in\C\, :\, e^{-\eps}\!\left(x_{n+1}\!\left(1-\frac{\pi}{x_n}\right)\!-\frac{2\pi}{3}\right)\!\leqslant\! |z|\!\leqslant\! e^\eps\! \left(x_{n+1}\!\left(1+\frac{\pi}{x_n}\right)\!+\frac{2\pi}{3}\right)\! \right\},
$$
for $n\geqslant N$. It is easy to check that this implies that $\phi(D_{\pm n})\subseteq g(Q_n)\cup \R_-$ with $\eps=\log\tfrac{3}{2}$. To show that $\phi(D_{\pm n})\cap \R_-=\emptyset$, observe that 
$$
D_{\pm n}\subseteq \left\{z\in\C\ :\ |\textup{arg}\,z\mp\tfrac{\pi}{2}|<\arcsin\frac{R_n}{h_n}\right\},
$$
and by Lemma~\ref{lem:growth-estimates}, \red{$R_n/h_n<1/(n!)^2\ge\tfrac{\pi}{18}$} for $n\geqslant 3$ and so \red{$\arcsin(R_n/h_n)<\tfrac{\pi}{12}$} for $n\geqslant 3$. Then, by Lemma~\ref{lem:qc-estimate1}, for \blue{$N$ large enough}, we have
$$
\phi(D_{\pm n})\subseteq \{z\in\C\, :\, |\textup{arg}\,z\mp\tfrac{\pi}{2}|<\red{\tfrac{\pi}{12}}+\varepsilon<\tfrac{\pi}{4}\}\subseteq \C\setminus \R_-,
$$
and therefore $\phi(D_{\pm n})\subseteq g(Q_n)$ for all $n\geqslant N$ as we wanted to prove.
\end{proof}

We now fix \blue{$N\geqslant 3$} sufficiently large so that Lemmas \ref{lem:initial-iterates} and \ref{lem:phi(Q)-in-g(Q)} are satisfied. In particular, from here onwards the set $\text{supp}\, \mu_{g_{\mathbf w}}$ is also fixed and the map $g_{\mathbf w}$ (and also $\phi_{\mathbf w},f_{\mathbf w}$) depends only on the choice of the sequence $\mathbf{w}\in\D^{\N_N}_{3/4} $, where $\N_N:=\N\setminus\{1,\hdots,N-1\}$.

Recall that if $\lambda\in\R_+$ and $D=\D(z_0,R)$ for some $z_0\in\C$ and $R>0$, then we use the notation $\lambda D:=\D(z_0,\lambda R)$.

\begin{lem}
\label{lem:inside-half-disk} 
There exists $C_4>0$ such that if we set $R_n' := C_4 R_n$ for all $n\geqslant N$, then $\D(\phi(\pm ih_n), R_n') \subseteq \phi(\frac{1}{2}D_{\pm n})$ for all $n\ge N$.   
\end{lem}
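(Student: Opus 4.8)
The goal is to find a uniform constant $C_4>0$ so that the Euclidean disc $\D(\phi(\pm ih_n), R_n')$ with $R_n'=C_4 R_n$ is contained in the image $\phi(\tfrac{1}{2}D_{\pm n})$ of the half-size disc $\tfrac12 D_{\pm n}=\D(\pm ih_n, \tfrac12 R_n)$. The plan is to apply Lemma~\ref{lem:qc-estimate2} with the discs $B_m$ obtained from reindexing $\{3D_n\}_{n\in\Z_N}$ (as in Lemma~\ref{lem:assumption}), taking $\kappa=\tfrac12$. First I would check that the separation hypothesis of Lemma~\ref{lem:qc-estimate2} holds for these discs: since the squares $E_n$ (and hence the discs $3D_n\supseteq E_n$) are sparse — by~\eqref{eq:growth-estimate-infinitely-many} and the disjointness of the $\{E_n\}_n$, consecutive discs have centers whose moduli $h_n$ grow super-exponentially while the radii $3R_n$ are comparatively tiny ($R_n/h_n<2\pi/(n!)^2$ by~\eqref{eq:growth-estimate-inequalities}) — one obtains $|z-z'|\ge C_1\sqrt{|zz'|}$ for $z\in B_m$, $z'\in B_{m'}$, $m\ne m'$, for some uniform $C_1>0$. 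This is the bounded-overlap/sparseness condition, and it follows directly from the growth estimates in Lemma~\ref{lem:growth-estimates}.

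Granting that, Lemma~\ref{lem:qc-estimate2} with $\kappa=\tfrac12$ provides a uniform constant $C_2=C_2(\tfrac12)>1$ such that for every $n\ge N$ and every $\zeta$ on the circle $|\zeta\mp ih_n|=\tfrac12 R_n$ (which is $\partial(\tfrac12 D_{\pm n})$),
\[
\frac{1}{C_2}\cdot\frac{R_n}{2}\le |\phi(\zeta)-\phi(\pm ih_n)|\le C_2\cdot\frac{R_n}{2}.
\]
In particular, the image $\phi(\partial(\tfrac12 D_{\pm n}))$ is a Jordan curve that stays at Euclidean distance at least $\frac{R_n}{2C_2}$ from $\phi(\pm ih_n)$. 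Since $\phi$ is a homeomorphism of $\C$ carrying the interior $\tfrac12 D_{\pm n}$ onto the bounded component of the complement of this Jordan curve, and since $\phi(\pm ih_n)$ lies in $\phi(\tfrac12 D_{\pm n})$, the open disc centered at $\phi(\pm ih_n)$ of radius $\frac{R_n}{2C_2}$ cannot reach $\phi(\partial(\tfrac12 D_{\pm n}))$ and hence is entirely contained in $\phi(\tfrac12 D_{\pm n})$. Therefore setting $C_4:=\frac{1}{2C_2}$ yields $\D(\phi(\pm ih_n), R_n')\subseteq \phi(\tfrac12 D_{\pm n})$ with $R_n'=C_4 R_n$, as desired.

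The main obstacle is really just verifying the separation condition $|z-z'|\ge C_1\sqrt{|zz'|}$ needed to invoke Lemma~\ref{lem:qc-estimate2}; once that is in hand the rest is a standard topological argument about the image of a circle under a homeomorphism. The separation is where the super-exponential growth of $(x_n)_n$ — hence of $h_n$ — relative to the radii $R_n$ genuinely enters, and one should be a little careful to handle both the ``same sign, consecutive index'' case (where $h_{n+1}/h_n>(n!)^2$) and the ``opposite sign'' case (where $|ih_m-(-ih_{m'})|=h_m+h_{m'}$ is even larger). One subtlety worth noting: Lemma~\ref{lem:qc-estimate2} is stated for $|\zeta-\zeta_m|=\kappa r_m$ with $r_m=3R_n$, so with $\kappa=\tfrac16$ one gets control on $\partial(\tfrac12 D_{\pm n})$; the choice of the auxiliary parameter $\kappa$ must be matched to the factor $3$ in the reindexing, but this only changes the value of $C_2$, not the structure of the argument.
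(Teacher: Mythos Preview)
Your approach is essentially the same as the paper's: verify the separation hypothesis $|z-z'|\ge C_1\sqrt{|zz'|}$ for the reindexed discs $B_m=3D_n$ using the growth estimates of Lemma~\ref{lem:growth-estimates}, then apply Lemma~\ref{lem:qc-estimate2} with $\kappa=\tfrac16$ (so that $\kappa r_m=\tfrac12 R_n$) and read off $C_4=3\kappa/C_2=\tfrac{1}{2C_2}$. Your self-correction in the final paragraph is exactly right---the value $\kappa=\tfrac12$ used in your middle computation should indeed be $\kappa=\tfrac16$ throughout, as the paper does, but as you note this affects only the constant.
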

\begin{proof} 
Define $C_1:=\tfrac{1}{2}$, \blue{we want to show that if $z_1 \in B_{m_1}$ and $z_2 \in B_{m_2}$ with $1\leqslant m_1 < m_2$, then $|z_2-z_1|\geqslant C_1 \sqrt{|z_1 z_2|}$. If $m_1$ and $m_2$ have different parity, this is clear. Therefore, we may assume without loss of generality that $m_1,m_2$ are both odd.} Lemma~\ref{lem:growth-estimates} implies that \red{if $N$ is large enough}
$$
|z_2-z_1|\geqslant (h_{n_2}-3R_{n_2})-(h_{n_2-1}+3R_{n_2-1})\geqslant  C_1 (h_{n_2}+3R_{n_2})\geqslant C_1 \max\{|z_1|, |z_2|\}
$$
\blue{as required, where $n_2\in\Z_N$ is the value that corresponded to $m_2\in \N$ before reindexing.} Thus, by Lemma \ref{lem:qc-estimate2}, for $\kappa:=\tfrac{1}{6}$, there exists \blue{$C_2=C_2(\kappa)>0$} such that
$$
\D(\phi(\zeta_m),\kappa r_m/C_2)\subseteq \phi(\kappa B_m),\quad \text{ for all } m\in\N.
$$
Since \blue{$\kappa r_m=3\kappa R_n=\tfrac{1}{2}R_n$ for all $m\in\N$}, putting $C_4:=3\kappa/C_2$ we obtain that
$\D(\phi(\pm ih_n), R_n') \subseteq \phi(\frac{1}{2}D_{\pm n})$ for all $n\ge N$.
\end{proof}

We are now ready to define the domains in the middle line of the Diagram~1.  
\begin{dfn}[Domains $\{U_n\}_n$] 
\label{dfn:domains-Un}
For every $n\geqslant N$, define the set 
$$
\widehat{U}_{n,n} := g^{-1}(\D(\phi(ih_n), R_n'))\subseteq Q_n,
$$
which is well defined by Lemmas \ref{lem:cosh}, \ref{lem:phi(Q)-in-g(Q)} and \ref{lem:inside-half-disk}.  
Then, for $n\geqslant j\geqslant M+1$, we can define recursively 
$$
U_{n,j} := \phi(\widehat{U}_{n,j}) \quad \text{ and }  \quad \widehat{U}_{n,j-1} := g^{-1}(U_{n,j})\subseteq Q_{j-1},  
$$
where the inclusion is guaranteed by Lemma~\ref{lem:phi(Q)-in-g(Q)}. Finally, let 
$$
U_n := (\phi \circ g^{-1})^M \circ \phi(\widehat{U}_{n,M})\subseteq \D\left(\tfrac{1}{2},\tfrac{1}{8}\right),\quad \text{ for } n\ge N,
$$
by Lemma~\ref{lem:initial-iterates}.
\end{dfn}

Keeping a similar notation as in \cite{bishop15}, the sets $\{U_n\}_n$ will be contained in the grand orbit of the oscillating wandering domain (see Figure~\ref{fig:setup2}). \red{Note that the set $f^{n+1}(U_n)$ in Bishop's notation corresponds to the set $f^{n+2}(U_n)$ in our notation.}

\begin{figure}[h]
\vspace{-104pt}
\def\svgwidth{\linewidth}
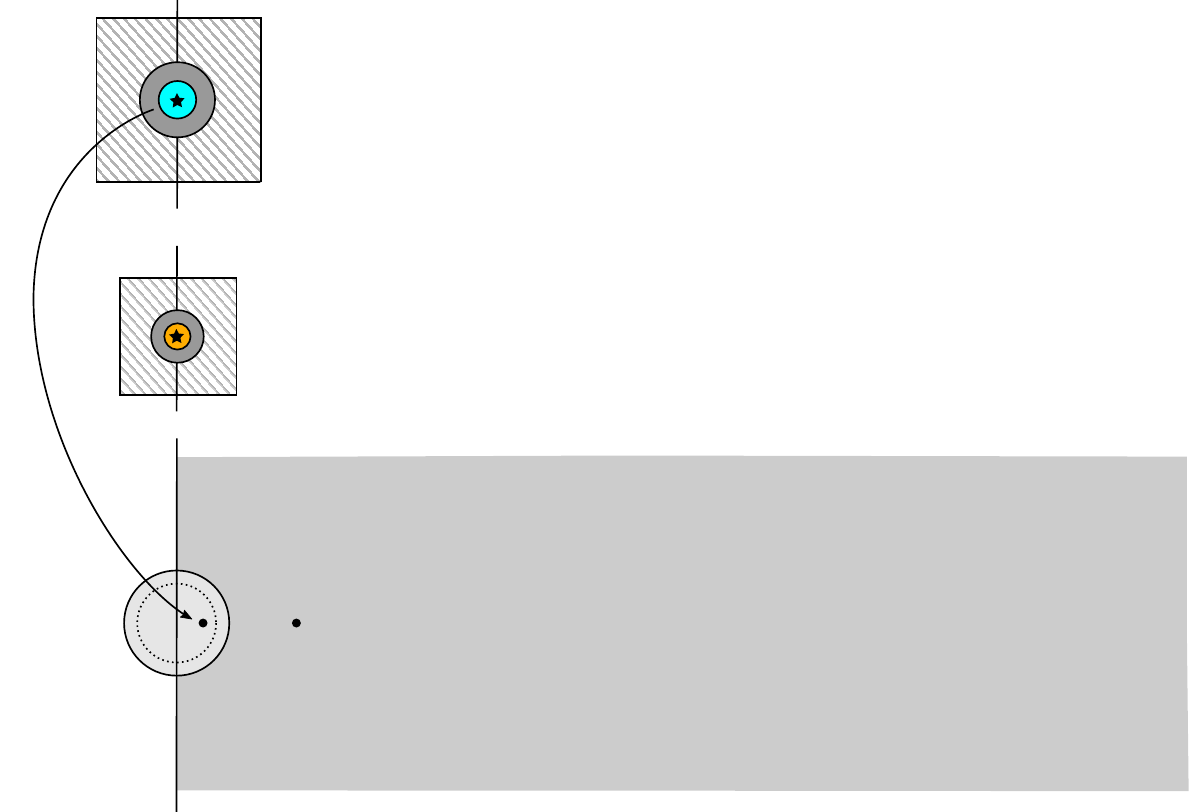
%\vspace{5pt}
\caption{The iterates of the domains $\widehat{U}_n$ from Definition~\ref{dfn:domains-Un} by the function $\hat{f}_\mathbf{w}=\phi_\mathbf{w}^{-1}\circ g_\mathbf{w}$ which is conjugated to $f_\mathbf{w}$ by the map~$\phi_\mathbf{w}$.}
\label{fig:setup2}
\end{figure}

We now need to show that every domain $U_n$ contains a disc $\D(c_n,\rho_n)$ with \redf{not too small} radius $\rho_n>0$. We first define the points $\{c_n\}_n$ that we call the centers of $\{U_n\}_n$.

\begin{dfn}[Centers $\{c_n\}_n$]
\label{dfn:centers-cn}
For every $n\in\N$, define the point
$$
\hat{c}_{n,n}(\mathbf{w}) := g^{-1}(\phi(ih_n)) \in \widehat{U}_{n,n}
$$
and, for $n\geqslant j\geqslant M+1$, define recursively
$$
c_{n,j}(\mathbf{w}) := \phi(\hat{c}_{n,j}(\mathbf{w}))\in U_{n,j}\quad \text{ and } \quad  
\hat{c}_{n,j-1}(\mathbf{w}) := g^{-1}(c_{n,j}(\mathbf{w}))\in Q_{j-1}.
$$% ($j=n, \dots, 1$) 
Finally, we put
$$
c_n=c_n(\mathbf{w}) :=  (\phi \circ g^{-1})^M \circ \phi(\hat{c}_{n,M}(\mathbf{w}))\in U_n, \quad \text{ for } n\ge N.
$$  
%Then we have  
%$c_{n,j}(\mathbf{w}) \in U_{n,j}$, $\hat{c}_{n,j-1}(\mathbf{w}) \in V_{n,j-1} \subset Q_{j-1}$ ($j=n, \dots, M+1$) 
%and $c_0(\mathbf{w}) \in U_n$.  
\end{dfn}

Note that the maps in the middle row of Diagram~1, such as $g:\widehat{U}_{n,j} \to U_{n,j+1}$ and $\phi: \widehat{U}_{n,j} \to U_{n,j}$, are all conformal isomorphisms.  
In the top row, the maps $g:Q_j\to g(Q_j)$ are also conformal isomorphisms and idependent of $\mathbf{w}\in\D_{3/4}^{\N_N}$ 
except the last one, from $\frac{1}{2} D_n$, on which $g$ is analytic and depends on $w_n$. The maps of the form $\phi:Q_j\to g(Q_{j-1})$ in the top row are also holomorphic, as $Q_j$ is disjoint from $\supp \mu_g$, but not surjective. We would like to emphasize that all the domains depend on $\mathbf{w}$, and we may write $U_n(\mathbf{w})$. 

\begin{rmk}
\redf{It follows from Lemma~\ref{lem:qc-estimate3} and the expansivity of the function $g$ on the rectangles $\{Q_n\}_n$ that $|\hat{c}_{n+1,n}-\hat{c}_{n,n}|$ is bounded and hence $|c_{n+1}-c_n|\to 0$ exponentially as $n\to\infty$, so the sequence of critical values $\{w_n\}_n$ converges to a point $w_\infty\in \overline{\D}(\tfrac{1}{2},\tfrac{1}{8})$.}
\label{rmk:w-infty}
\end{rmk}

Although the function that maps each sequence $\mathbf{w}\in \overline{\D}(\frac{1}{2}, \frac{1}{8})^{\N_N}$ to the Beltrami coefficient $\mu_{\phi_{\mathbf{w}}} \in L^{\infty}(\C)$ is not continuous, we can still prove the following.

\begin{lem}
\label{lem:holom-dependence-of-centers} 
Let $C(\C, \C)$ denote the space of all continuous maps from $\C$ to $\C$. The mapping 
$$
\begin{array}{ccc}
\overline{\D}(\frac{1}{2}, \frac{1}{8})^{\N_N} & \longrightarrow & C(\C, \C)\vspace{5pt}\\
\mathbf{w} & \mapsto & \phi_{\mathbf{w}} 
\end{array}
$$
is continuous, where 
we use the product topology in $\overline{\D}(\frac{1}{2}, \frac{1}{8})^{\N_N}$ and we use the compact open topology in $C(\C, \C)$.  As a consequence, for each $n \ge N$, the center $c_n(\mathbf{w})$ depends continuously on the sequence $\mathbf{w}$, and the mapping
$$
\begin{array}{ccc}
\overline{\D}(\frac{1}{2}, \frac{1}{8})^{\N_N} &\longrightarrow& \overline{\D}(\frac{1}{2}, \frac{1}{8})^{\N_N}\vspace{5pt}\\
 \mathbf{w} &\mapsto& (c_{N+1}(\mathbf{w}), c_{N+2}(\mathbf{w}), \dots) 
\end{array}
$$
is continuous with respect to the product topology.  
\end{lem}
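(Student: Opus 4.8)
The plan is to prove sequential continuity of $\mathbf{w}\mapsto\phi_{\mathbf{w}}$, which suffices since $\overline{\D}(\tfrac12,\tfrac18)^{\N_N}$ is metrizable. So suppose $\mathbf{w}^{(k)}\to\mathbf{w}$ in the product topology; I would show $\phi_{\mathbf{w}^{(k)}}\to\phi_{\mathbf{w}}$ uniformly on compacta. Two basic facts are used throughout: by Definition~\ref{dfn:gw} the coefficient $\mu_{g_{\mathbf{w}}}$ depends on a given coordinate $w_n$ only through its restriction to $D_{\pm n}$, and there it depends continuously on $w_n$ in $L^\infty$ (because $\mu_{\rho_{w_n}}$ does, by Lemma~\ref{lem:shift-interpolation}, and $G_n$ is fixed); and $\|\mu_{g_{\mathbf{w}}}\|_\infty\le k_0:=\tfrac{K-1}{K+1}<1$ for all $\mathbf{w}$, so the $\phi_{\mathbf{w}}$ form a normal family of $K$-quasiconformal maps fixing $0$ and $1$, hence are uniformly bounded on each compact set. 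Since the global map $\mathbf{w}\mapsto\mu_{\phi_{\mathbf{w}}}\in L^\infty(\C)$ is \emph{not} continuous, I would split $\mathbf{w}^{(k)}$ into a head and a tail: for $J\ge N$ let $\mathbf{w}^{(k),J}$ be the sequence with coordinates $w_N^{(k)},\dots,w_J^{(k)}$ from $\mathbf{w}^{(k)}$ and all remaining coordinates from $\mathbf{w}$. Then $\mu_{g_{\mathbf{w}^{(k),J}}}$ and $\mu_{g_{\mathbf{w}}}$ differ only on the \emph{fixed bounded} set $\bigcup_{N\le n\le J}(D_n\cup D_{-n})$, and there the difference tends to $0$ uniformly as $k\to\infty$ by the first fact; so $\mu_{g_{\mathbf{w}^{(k),J}}}\to\mu_{g_{\mathbf{w}}}$ in $L^\infty(\C)$ for every fixed $J$, and the classical continuous dependence in the Measurable Riemann Mapping Theorem (\cite{ahlfors06,branner-fagella14}) gives $\phi_{\mathbf{w}^{(k),J}}\to\phi_{\mathbf{w}}$ locally uniformly.

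The main obstacle is to control $\phi_{\mathbf{w}^{(k)}}$ against $\phi_{\mathbf{w}^{(k),J}}$ for large $J$ \emph{uniformly in $k$}; this is where the estimates of Section~\ref{sec:qc-estimate} enter. Set $\Phi_{k,J}:=\phi_{\mathbf{w}^{(k),J}}\circ\phi_{\mathbf{w}^{(k)}}^{-1}$, a $K^2$-quasiconformal homeomorphism of $\C$ fixing $0$ and $1$. Since $\mu_{g_{\mathbf{w}^{(k)}}}$ and $\mu_{g_{\mathbf{w}^{(k),J}}}$ agree off $\bigcup_{n>J}(D_n\cup D_{-n})$, the composition rule for Beltrami coefficients gives $\supp\mu_{\Phi_{k,J}}\subseteq\phi_{\mathbf{w}^{(k)}}\bigl(\bigcup_{n>J}(3D_n\cup 3D_{-n})\bigr)$. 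By Lemma~\ref{lem:assumption} every $\phi_{\mathbf{w}^{(k)}}$ satisfies Assumption~\ref{ass:qu-estimates} with the $\mathbf{w}$-independent discs $\{3D_n\}_{n\in\Z_N}$, so Lemma~\ref{lem:qc-estimate2} places $\phi_{\mathbf{w}^{(k)}}(3D_{\pm n})$ inside a disc about $\phi_{\mathbf{w}^{(k)}}(\pm ih_n)$ of radius comparable to $R_n$, and Lemma~\ref{lem:qc-estimate1} gives $|\phi_{\mathbf{w}^{(k)}}(\pm ih_n)|$ comparable to $h_n$; thus, after reindexing, these discs verify Assumption~\ref{ass:qu-estimates} for $\Phi_{k,J}$ (with constant $K^2$) and their aggregate quantities are $O\bigl(\sum_{n>J}R_n/h_n\bigr)$, which is bounded uniformly in $k$ and tends to $0$ as $J\to\infty$ by \eqref{eq:growth-estimate-finitesum}. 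Applying Lemma~\ref{lem:qc-estimate1} to $\Phi_{k,J}$: for every $\e>0$ there is $J_0$ \emph{independent of $k$} such that $|\log(\Phi_{k,J}(\zeta)/\zeta)|_{\calC}<\e$ for all $\zeta\neq0$, $J\ge J_0$ and $k$; by \eqref{eq:qc-estimate1-2} and the uniform bound on $\phi_{\mathbf{w}^{(k)}}$ over a compact $L$, this makes $\sup_{z\in L}|\phi_{\mathbf{w}^{(k)}}(z)-\phi_{\mathbf{w}^{(k),J}}(z)|$ as small as we wish, uniformly in $k$, by taking $J$ large. Combining the two paragraphs: given $L$ and $\e>0$, first fix $J$ so the tail term is $<\e/2$ for all $k$, then take $k$ large so that $\sup_L|\phi_{\mathbf{w}^{(k),J}}-\phi_{\mathbf{w}}|<\e/2$. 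This proves the first assertion.

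The remaining assertions are then routine. By Definition~\ref{dfn:centers-cn}, $c_n(\mathbf{w})$ is built from $\phi_{\mathbf{w}}$ by finitely many evaluations and compositions with the fixed holomorphic branch $g^{-1}$: first $\hat c_{n,n}(\mathbf{w})=g^{-1}(\phi_{\mathbf{w}}(ih_n))$, then recursively $c_{n,j}(\mathbf{w})=\phi_{\mathbf{w}}(\hat c_{n,j}(\mathbf{w}))$ and $\hat c_{n,j-1}(\mathbf{w})=g^{-1}(c_{n,j}(\mathbf{w}))$, and finally $c_n(\mathbf{w})=(\phi_{\mathbf{w}}\circ g^{-1})^M\circ\phi_{\mathbf{w}}(\hat c_{n,M}(\mathbf{w}))$. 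Since evaluation $C(\C,\C)\times\C\to\C$ is continuous for the compact-open topology and $g^{-1}$ is a fixed continuous map on the relevant (open) sets, each $\mathbf{w}\mapsto c_n(\mathbf{w})$ is continuous. Because $c_n(\mathbf{w})\in U_n\subseteq\D(\tfrac12,\tfrac18)$ by Definition~\ref{dfn:domains-Un}, the map $\mathbf{w}\mapsto(c_{N+1}(\mathbf{w}),c_{N+2}(\mathbf{w}),\dots)$ does take values in $\overline{\D}(\tfrac12,\tfrac18)^{\N_N}$, and it is continuous for the product topology precisely because each of its coordinate maps is.
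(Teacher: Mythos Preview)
Your proposal is correct and follows essentially the same strategy as the paper: split into a finite ``head'' handled by the classical $L^\infty$-continuous dependence in the Measurable Riemann Mapping Theorem, and an infinite ``tail'' controlled by applying Lemma~\ref{lem:qc-estimate1} to the composition $\phi_{\mathbf{w}^{(k),J}}\circ\phi_{\mathbf{w}^{(k)}}^{-1}$, after checking via Lemmas~\ref{lem:qc-estimate1} and~\ref{lem:qc-estimate2} that the $\phi_{\mathbf{w}^{(k)}}$-images of the discs $3D_{\pm n}$ still satisfy Assumption~\ref{ass:qu-estimates} with uniform constants. Your presentation via sequential continuity is somewhat more explicit than the paper's terse $\eps$--$\delta$ version, but the two arguments are the same in substance.
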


We give the proof at the end of Appendix~\ref{sec:proof-of-estimates} (see p. \pageref{proof:lem:holom-dependence-of-centers}).

\section{Inner radius of $U_n$ and derivatives along the orbit of $c_n$}
\label{sec:derivatives}

In order to create the oscillating wandering domain, we will prove that for some choice of $\mathbf{w}\in\D_{3/4}^{\N_N}$, there exists $N_1\geqslant N$ so that 
$$
f^{n+2}(U_n)\subseteq U_{n+1},\quad \text{ for all } n\geqslant N_1,
$$
and hence the domains $\{U_n\}_n$ are contained in the orbit of the wandering domain for $n\geqslant N_1$. To that end, first we need to make sure that $U_n$ contains a disc with center $c_n(\mathbf{w})$ and sufficiently large radius $\rho_n>0$ so that 
$$
f\left(\phi(\tfrac{1}{2}D_n)\right)= g\left(\tfrac{1}{2}D_n\right)= \D\left(w_n,\left(\tfrac{1}{2}\right)^{2d_n}\right)\subseteq \D(c_{n+1}(\mathbf{w}),\rho_{n+1})\subseteq U_{n+1},
$$
for $n\geqslant N_1$. Then, in the next section we will do the second part of this process, that consists on choosing the sequence $\mathbf{w}\in\D_{3/4}^{\N_N}$.

%Note that the approach that we use here is completely different to that of \cite[Theorem~17.1]{bishop15}. Bishop uses a recursive argument to find integers $\{d_n\}_n$ sufficiently large so that the size of the domains $f^{n+1}(U_n)$ is small enough to fit inside $U_{n+1}$, while in our case the values $\{d_n\}_n$ are given in the beginning with explicit formulas, and here we only check that indeed they satisfy the required properties. 

We start by giving positive numbers $\{\rho_n\}_n$ with the property that $\D(c_n(\mathbf{w}),\rho_n)\subseteq U_{n}$ for all $n\geqslant N$. \vfour{In other words, $\rho_n$ is a lower bound of the inner radius of the domain $U_n$ with center at $c_n$ for $n\geqslant N$.}

\begin{lem}
\label{lem:inner-radius} 
There exists a constant $C_5>0$ such that if we define 
\begin{equation}
 \rho_n := \exp\left(-nC_5 - \sum_{j=0}^{n-1} x_j - x_{n-1}\right),\quad \text{ for } n\geqslant N, 
\end{equation}
then $\D(c_n(\mathbf{w}), \rho_n) \subset U_n$ for all $n\geqslant N$.  
\end{lem}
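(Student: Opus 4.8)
The idea is to pull a round disc inside $\widehat U_{n,n}$ back through the chain of conformal isomorphisms in the middle row of Diagram~1 down to $U_n$, keeping track of the distortion by means of Koebe's theorem, the quasiconformal estimates of Section~\ref{sec:qc-estimate}, and the growth estimates of Section~\ref{sec:setup}. Let $\Psi_n\colon \widehat U_{n,n}\to U_n$ denote the composition of all these maps, so that $\Psi_n=(\phi\circ g^{-1})^M\circ\phi\circ g^{-1}\circ\phi\circ\cdots\circ g^{-1}\circ\phi$, with the block $g^{-1}\circ\phi$ iterated so as to carry $\widehat U_{n,n}\mapsto U_{n,n}\mapsto\widehat U_{n,n-1}\mapsto\cdots\mapsto\widehat U_{n,M}\mapsto U_n$ as in Definition~\ref{dfn:domains-Un}. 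As noted after Definition~\ref{dfn:centers-cn}, each factor is a conformal isomorphism between the indicated domains, so $\Psi_n$ is univalent on $\widehat U_{n,n}$ with $\Psi_n(\hat c_{n,n})=c_n$. By the Koebe $1/4$-theorem \cite[Corollary~1.4]{pommerenke75}, if $\D(\hat c_{n,n},r_n)\subseteq\widehat U_{n,n}$ then $\D(c_n,\tfrac14 r_n\,|\Psi_n'(\hat c_{n,n})|)\subseteq\Psi_n(\widehat U_{n,n})=U_n$. Hence it suffices to establish, with constants independent of $n$ and $\mathbf{w}$, the two estimates \textbf{(a)} $\widehat U_{n,n}\supseteq\D(\hat c_{n,n},r_n)$ with $r_n\ge C'e^{-x_{n-1}}$ for some $C'>0$, and \textbf{(b)} $|\Psi_n'(\hat c_{n,n})|\ge\exp(-\sum_{j=0}^{n-1}x_j-c_1 n)$ for some $c_1>0$; together these give $\tfrac14 r_n\,|\Psi_n'(\hat c_{n,n})|\ge\rho_n$ once $C_5$ is taken large enough (depending only on $C'$, $c_1$, $N$).

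For \textbf{(a)}: since $\D(\phi(ih_n),R_n')\subseteq\phi(\tfrac12 D_{\pm n})\subseteq g(Q_n)$ by Lemmas~\ref{lem:phi(Q)-in-g(Q)} and~\ref{lem:inside-half-disk}, the branch $g^{-1}$ is univalent on this disc and $\hat c_{n,n}=g^{-1}(\phi(ih_n))$, so Koebe gives $\widehat U_{n,n}=g^{-1}(\D(\phi(ih_n),R_n'))\supseteq\D(\hat c_{n,n},\tfrac14\,|g'(\hat c_{n,n})|^{-1}R_n')$. Now $|g'(\hat c_{n,n})|^2=|2\sinh\hat c_{n,n}|^2=|2\cosh\hat c_{n,n}|^2-4=|\phi(ih_n)|^2-4$, while $|\phi(ih_n)|\le e^{\eps}h_n$ by Lemma~\ref{lem:qc-estimate1}, and $R_n'=C_4 R_n$ with $R_n/h_n\ge \pi/(4x_n)$ for $n$ large by the growth estimates of Lemma~\ref{lem:growth-estimates} (using $d_n\ge x_{n+1}/x_n-1$ and $x_{n+1}/x_n\to\infty$). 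Combining these with $x_n=2\cosh x_{n-1}\le 2e^{x_{n-1}}$ yields $r_n\ge\tfrac{C_4\pi}{32e^{\eps}}e^{-x_{n-1}}$, uniformly in $\mathbf{w}$.

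For \textbf{(b)}: expand $\Psi_n'(\hat c_{n,n})$ as the product of the derivatives of its $n$ factors of type $g^{-1}$ and its $n+1$ factors of type $\phi$, each evaluated at the appropriate center ($\hat c_{n,j}$, $c_{n,j}$, or one of the finitely many intermediate centers inside $(\phi\circ g^{-1})^M\circ\phi$). The factor $g^{-1}\colon U_{n,j}\to\widehat U_{n,j-1}$ maps its center into $Q_{j-1}$, and in total the $g^{-1}$-factors land in the boxes $Q_{n-1},Q_{n-2},\dots,Q_0$, the last one in $\D(\tfrac12,\tfrac18)$ by Lemma~\ref{lem:initial-iterates}; since $|g'(z)|=|2\sinh z|\le 2\cosh(x_j+1)\le 2e^{x_j+1}$ for $z\in Q_j=Q(x_j)$, the product of all $g^{-1}$-derivatives is at least $\exp(-\sum_{j=0}^{n-1}x_j-(1+\log 2)n-O(1))$. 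For the $\phi$-factors, taking $N$ large and using Lemma~\ref{lem:qc-estimate1} to locate all the centers near the reference points $x_j$ (and near $x_n+\tfrac{\pi i}{2}$ in the case of $\hat c_{n,n}$), all of them lie in the fixed sector $\{z\in\C\, :\,|\arg z|<\tfrac{\pi}{3}\}$, which stays a definite distance in angle from $\bigcup_m B_m$ since $\supp\mu_{\phi}$ clusters around $\pm i\R$; hence Lemma~\ref{lem:qc-estimate3} with $\theta=\tfrac{\pi}{4}$ gives $|\phi'|\ge 1/C_3$ at each of these $n+1$ points, so the product of the $\phi$-derivatives is at least $C_3^{-(n+1)}$. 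Multiplying the two products gives $|\Psi_n'(\hat c_{n,n})|\ge\exp(-\sum_{j=0}^{n-1}x_j-c_1 n)$ for a suitable constant $c_1>0$. Since $C'$, $c_1$, $C_3$ and $C_4$ depend only on $K$ and the fixed data $\{d_n,R_n,h_n\}_n$, not on $n$ or $\mathbf{w}$, a single sufficiently large choice of $C_5$ makes $\D(c_n(\mathbf{w}),\rho_n)\subseteq U_n$ hold for all $n\ge N$ and all $\mathbf{w}\in\D_{3/4}^{\N_N}$.

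\textbf{Main obstacle.} The delicate point is the bookkeeping in \textbf{(b)}: tracking which inverse branch of $g$ is evaluated at which center and into which box $Q_j$ it maps, so that the telescoping product of $|g'|^{-1}$ produces exactly $\exp(-\sum_{j=0}^{n-1}x_j)$, with the extra factor $e^{-x_{n-1}}$ in the statement coming separately from the inner radius of $\widehat U_{n,n}$ in \textbf{(a)}, and verifying that all accumulated error terms (the Koebe $\tfrac14$-factors, the $O(1)$ slack in $|2\sinh z|$ on each $Q_j$, and the powers of $C_3$ from the $\phi$-factors) grow only linearly in $n$, so that they are absorbed into the linear term $nC_5$. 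A secondary technical point is checking the sector hypothesis of Lemma~\ref{lem:qc-estimate3} for every center, uniformly in $\mathbf{w}$.
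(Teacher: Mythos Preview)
Your argument is correct and follows essentially the same route as the paper. The paper streamlines your two-step decomposition into one: it applies Koebe directly to $\Psi:=(f^{n+1}|_{U_n})^{-1}\colon \D(\phi(ih_n),R_n')\to U_n$, which is already defined on a round disc, so there is no need for your separate step~(a); the cancellation of the $x_n$ term that you obtain in~(a) (from $\log R_n'\sim x_n-x_{n-1}$ against $\log|g'(\hat c_{n,n})|\sim x_n$) then appears inside the single chain-rule computation for $\Psi'$. One minor slip: the identity $|2\sinh z|^2=|2\cosh z|^2-4$ is false in general (the correct relation is $|2\cosh z|^2-|2\sinh z|^2=4\cos(2\,\Im z)$), but the bound you actually need, $|g'(\hat c_{n,n})|\le C\,|\phi(ih_n)|$, follows immediately from $|\sinh z-\cosh z|=e^{-\Re z}\le 1$ on $Q_n$, so your estimate for $r_n$ in~(a) stands.
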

\begin{proof} 
Let $\Psi := (f^{n+1}|_{U_n})^{-1}$ which is a univalent mapping from $\D(\phi(ih_n), R_n')$ onto $U_n$, sending the center $\phi(ih_n)$ to $c_n(\mathbf{w})$.  
By the Koebe $1/4$-Theorem \cite[Corollary~1.4]{pommerenke75}, $U_n$ contains a disc with center $c_n(\mathbf{w})$ and radius $\frac{1}{4} |\Psi'(\phi(ih_n))| R_n'$.    
So we want to show that $\rho_n \le \frac{1}{4} |\Psi'(\phi(ih_n))| R_n'$ for all $n\ge N$.  
\par
By the chain rule, we have 
\begin{equation}
\label{eq:estimate-inverse-der}
\frac{1}{ \Psi'(\phi(ih_n)) } = (f^{n+1})'(c_n(\mathbf{w})) =\left((g\circ \phi^{-1})^M\right)'(c_n(\mathbf{w}))\cdot \prod_{j=M}^{n} g'(\hat{c}_{n,j}(\mathbf{w})) \cdot \frac{1}{\phi'(\hat{c}_{n,j}(\mathbf{w}))}. 
\end{equation}
Since $\log g'(z) = z + \log (1- e^{-2z})$ for the principal branch of $\log$, we have, for $\Re z>0$, 
\[
\left| \log |g'(z)| - \Re z \right| \le \left| \log g'(z) - z \right| = \left| \log (1- e^{-2z}) \right| \le \sum_{m=1}^{\infty} \frac{|e^{-2z}|^m}{m} \le \frac{e^{-2\Re z}}{1- e^{-2\Re z}}. 
\]
So for $j \ge M$, $\hat{c}_{n,j}(\mathbf{w}) \in Q_j$ and $\Re \hat{c}_{n,j}(\mathbf{w}) \ge 1$, hence $\log |g'(\hat{c}_{n,j}(\mathbf{w}))| \le x_j +2$. On the other hand, for $m\in\N$, 
$$
\red{B_m\subseteq \left\{z\in\C\ :\ |\textup{arg}\,z-\tfrac{\pi}{2}|<\arcsin\frac{r_m}{|\zeta_m|}\right\}}
$$
and by inequality~\eqref{eq:growth-estimate-inequalities} in Lemma~\ref{lem:growth-estimates}, we have \red{$r_m/|\zeta_m|<\tfrac{\pi}{6}$} for all $m\in\N$, so $\arcsin(r_m/|\zeta_m|)<\red{\tfrac{\pi}{4}}$ for all $m\in\N$. Since $|\textup{arg}\,z|<\blueb{\tfrac{\pi}{6}}$ for all $z\in Q_j$ with $j\geqslant M$, Lemma~\ref{lem:qc-estimate3} with $\theta=\tfrac{\pi}{12}$ implies that for $j\geqslant M$, $\log |\phi'(\hat{c}_{n,j}(\mathbf{w}))|$ is bounded by some constant. Finally, since $c_n(\mathbf{w})$ stays in a bounded region, $\log |\left((g\circ \phi^{-1})^M\right)'(\hat{c}_n(\mathbf{w}))|$ is bounded by some other constant.  

As $R_n'=\lambda R_n$ and $\log R_n$ was $\log \frac{x_{n+1}}{x_{n}} \sim x_{n} -x_{n-1}$ 
up to a constant. We obtain that there is $C_5>0$ such that 
\[
\log\! \rho_n\! =\! -nC_5 - \sum_{j=0}^{n-1} x_j - x_{n-1} 
\le \!\!\sum_{j=M}^{n} \bigl( \log\! |\phi'(\hat{c}_{n,j}(\mathbf{w}))| - \log\! | g'(\hat{c}_{n,j}(\mathbf{w})) | \bigr) +\log \tfrac{1}{4} + \log\! R_n', 
\]
which implies our assertion. Note that the crucial point here is that the term $x_n$ from $\log R_n'$ cancels with the term $x_n$ coming from $\log |g'(\hat{c}_{n,n}(\mathbf{w}))|$ in the sum.
\end{proof}

Next, we check that there is a constant $N_1\geqslant N$ so that \vfive{the degrees $\{2d_n\}_n$ given in Definition~\ref{dfn:hn-dn-Rn-En-Dn-Qn} are sufficiently large that $\text{diam}(f_\mathbf{w}(\tfrac{1}{2}D_n))<2\rho_{n+1}$ for all $n\geqslant N_1$.}

\pagebreak

\begin{lem} \label{lem:inclusion} 
There exists $N_1\ge N$ such that 
\begin{equation}
\left(\frac{1}{2}\right)^{2d_n} < \rho_{n+1},\quad \text{ for } n \ge N_1.  
\end{equation}
In particular, if $w_n = c_{n+1}(\mathbf{w})$ for \blueb{$n\ge N_1$}, then $f_{\mathbf{w}}^{n+2}(U_n) \subset U_{n+1}$ for \blueb{$n\ge N_1$}.  
\end{lem}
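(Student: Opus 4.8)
The plan is to prove the two displayed claims in turn: the first, $(\tfrac12)^{2d_n}<\rho_{n+1}$ for all large $n$, is an elementary comparison of growth rates, and the second, the dynamical inclusion, then follows by chasing the forward orbit of $U_n$ through Diagram~1.

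For the first claim I would take logarithms, reducing it to $2d_n\log 2>-\log\rho_{n+1}=(n+1)C_5+\sum_{j=0}^{n}x_j+x_n$. To bound the left-hand side from below, note that $x_{n+1}=2\cosh x_n>e^{x_n}$ and $d_n=\lfloor x_{n+1}/x_n\rfloor\ge x_{n+1}/x_n-1$, so that $2d_n\log 2\ge 2\log 2\,(e^{x_n}/x_n-1)$. To bound the right-hand side from above, use that $(x_j)_j$ is increasing, so $\sum_{j=0}^n x_j\le(n+1)x_n$, and that $x_n>(n!)^2$ by Lemma~\ref{lem:growth-estimates}, so that $n$ is negligible next to $x_n$ and $-\log\rho_{n+1}$ is bounded by a fixed power of $x_n$. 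Since $x_n\to\infty$ and $e^{x_n}/x_n$ dominates every power of $x_n$, the inequality holds for all $n$ beyond some threshold, and I take $N_1\ge N$ to be at least that threshold. (Incidentally $d_n\ge 3$ already holds for every $n\ge 1$, so $(\tfrac12)^{2d_n}\le\tfrac1{64}<\tfrac18$ throughout, a fact I use below.)

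For the inclusion, I would assume $w_n=c_{n+1}(\mathbf{w})$ for $n\ge N_1$, fix such an $n$, and follow the middle row of Diagram~1, all of whose maps are conformal isomorphisms (see the remarks after Definition~\ref{dfn:centers-cn}). Since $(g_{\mathbf{w}}\circ\phi^{-1})^M$ inverts $(\phi\circ g^{-1})^M$ on the relevant domains, one gets $f_{\mathbf{w}}^{M}(U_n)=\phi(\widehat U_{n,M})$, hence $f_{\mathbf{w}}^{j}(U_n)=U_{n,j}$ for $M+1\le j\le n$ by the recursion in Definition~\ref{dfn:domains-Un}, and finally $f_{\mathbf{w}}^{n+1}(U_n)=g_{\mathbf{w}}(\widehat U_{n,n})=\D(\phi(ih_n),R_n')$. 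Applying $f_{\mathbf{w}}=g_{\mathbf{w}}\circ\phi^{-1}$ once more and using Lemma~\ref{lem:inside-half-disk} (so that $\D(\phi(ih_n),R_n')\subseteq\phi(\tfrac12 D_n)$), I obtain $f_{\mathbf{w}}^{n+2}(U_n)\subseteq g_{\mathbf{w}}(\tfrac12 D_n)$. By Definition~\ref{dfn:gw}, on $\tfrac12 D_n\subseteq D_n$ the map $g_{\mathbf{w}}$ equals $\rho_{w_n}\circ G_n(\,\cdot-ih_n)$ with $G_n(z)=(z/R_n)^{2d_n}$ on $\D_{R_n}$, so $G_n$ carries $\tfrac12 D_n-ih_n=\D_{R_n/2}$ onto $\D(0,(\tfrac12)^{2d_n})$; since $(\tfrac12)^{2d_n}<\tfrac18$, $\rho_{w_n}$ acts there as $z\mapsto z+w_n$, whence $g_{\mathbf{w}}(\tfrac12 D_n)=\D(w_n,(\tfrac12)^{2d_n})$. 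Thus $f_{\mathbf{w}}^{n+2}(U_n)\subseteq\D(w_n,(\tfrac12)^{2d_n})$, and with $w_n=c_{n+1}(\mathbf{w})$ and the first claim $(\tfrac12)^{2d_n}<\rho_{n+1}$, Lemma~\ref{lem:inner-radius} gives $\D(w_n,(\tfrac12)^{2d_n})\subseteq\D(c_{n+1}(\mathbf{w}),\rho_{n+1})\subseteq U_{n+1}$, as required.

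I do not expect a genuine obstacle here: the substance of the lemma is already contained in Lemma~\ref{lem:inner-radius} (where the leading $x_n$-term of $\log R_n'$ was deliberately cancelled against the $x_n$ coming from $\log|g'(\hat c_{n,n}(\mathbf{w}))|$) and in the super-exponential growth of $d_n$, which is of order $e^{x_n}/x_n$ by $x_{n+1}\ge e^{x_n}$. The only point requiring care is the bookkeeping in the third paragraph — keeping track of which occurrences of ``$g$'' in Diagram~1 refer to the plain holomorphic map $2\cosh$ on the rectangles $Q_j$ (away from $\supp\mu_{g_{\mathbf{w}}}$) and which single application, on $\tfrac12 D_n$, uses the power-map-and-shift form of $g_{\mathbf{w}}$ — but this is entirely routine.
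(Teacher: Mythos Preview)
Your proposal is correct and follows essentially the same approach as the paper: take logarithms and compare the super-exponential growth of $d_n\sim e^{x_n}/x_n$ against $-\log\rho_{n+1}$, which is at most of polynomial size in $x_n$, and then chase the orbit through Diagram~1 using Lemma~\ref{lem:inside-half-disk} and the explicit power-map-plus-shift form of $g_{\mathbf{w}}$ on $\tfrac12 D_n$. Your write-up is in fact slightly more careful than the paper's (you make explicit that $(\tfrac12)^{2d_n}<\tfrac18$ so that $\rho_{w_n}$ acts as a translation there), but the argument is the same.
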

\begin{proof} 
The left hand side of the inequality satisfies
$$
\log\left(\left(\tfrac{1}{2}\right)^{2d_n}\right)=-2d_n\log 2=-2\frac{x_{n+1}}{x_n}\log2+O(1),\quad \text{ for } n\geqslant N,
$$
while the right hand side satisfies
$$
\log \rho_{n+1}=-(n+1)C_5-\sum_{j=0}^n x_j-x_n,\quad \text{ for } n\geqslant N,
$$
where $C_5>0$ is the constant from Lemma~\ref{lem:inner-radius}. \red{Since $x_{n+1}>e^{x_n}>6x_n^2$ for all $n\geqslant 3$}, and therefore there exists $N_1\geqslant N$ such that 
$$
\log\left(\left(\tfrac{1}{2}\right)^{2d_n}\right)<-\frac{x_{n+1}}{2x_n}<-3x_n<\log \rho_{n+1},\quad \text{ for } n\geqslant N_1,
$$
as required. If $w_n = c_{n+1}(\mathbf{w})$ for \blueb{$n\geqslant N_1$}, then 
$$
\begin{array}{rl}
f^{n+2}_{\mathbf{w}}(U_n) \hspace{-6pt}&\ds= f_{\mathbf{w}}( \D(\phi_{\mathbf{w}}(ih_n), R_n') ) \subset f_{\mathbf{w}}( \phi_{\mathbf{w}}(\tfrac{1}{2}D_n) ) 
= g_{\mathbf{w}}(\tfrac{1}{2}D_n) = \D(w_n, \left(\tfrac{1}{2}\right)^{2d_n}) \vspace{10pt}\\
&\ds\subseteq \D(c_{n+1}(\mathbf{w}), \rho_{n+1}) \subset U_{n+1},
\end{array}
$$
for all $n\geqslant N_1$.
\end{proof}

\section{Simultaneous shooting problem}
\label{sec:shooting}

It only remains to solve the equation $w_n = c_{n+1}(\mathbf{w})$. Observe that as we vary the parameters $\mathbf{w}=(w_n)_n$, the sequence $(c_n(\mathbf{w}))_n$ also moves, but more slowly. Recall that throughout the paper we use the notation $\N_N=\N \setminus\{1,\hdots,N-1\}$, where the constant $N\in\N$ was defined in Section \ref{sec:domains}.

%\pagebreak

%%%%%
\begin{lem} \label{lem:shooting} 
There exists $\mathbf{w}=(w_{N}, w_{N+1}, \dots) \in \overline{\D}(\frac{1}{2}, \frac{1}{8})^{\N_N}$ such that 
\begin{equation} \label{eq:shooting}
w_n = c_{n+1}(\mathbf{w}),\quad \text{ for } n \ge N.  
\end{equation}
\end{lem}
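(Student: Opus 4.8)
The plan is to exhibit the required $\mathbf{w}$ as a fixed point of the ``center map''. Set
$$
\mathcal{W} := \overline{\D}(\tfrac{1}{2}, \tfrac{1}{8})^{\N_N},
$$
equipped with the product topology, and consider the map $\Phi:\mathcal{W}\to\C^{\N_N}$ defined by $\Phi(\mathbf{w}) := (c_{N+1}(\mathbf{w}), c_{N+2}(\mathbf{w}), \dots)$, where the $c_n(\mathbf{w})$ are the centers from Definition~\ref{dfn:centers-cn}. A sequence $\mathbf{w}$ satisfying \eqref{eq:shooting} is precisely a fixed point of $\Phi$, so the whole matter reduces to checking that $\Phi$ is a continuous self-map of $\mathcal{W}$ and then invoking a fixed point theorem. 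I would not attempt a contraction/iteration argument here, since the ``slow motion'' of the centers as $\mathbf{w}$ varies does not come with quantitative control that would make $\Phi$ a contraction in any convenient metric; continuity plus compactness is the natural route.

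First I would check that $\Phi(\mathcal{W})\subseteq\mathcal{W}$, i.e.\ that $c_{n+1}(\mathbf{w})\in\overline{\D}(\tfrac{1}{2},\tfrac{1}{8})$ for every $n\ge N$ and every $\mathbf{w}\in\mathcal{W}$. This is not an extra computation: by Definition~\ref{dfn:centers-cn} one has $c_{n+1}(\mathbf{w}) = (\phi_{\mathbf{w}}\circ g^{-1})^M\circ\phi_{\mathbf{w}}(\hat{c}_{n+1,M}(\mathbf{w}))$ with $\hat{c}_{n+1,M}(\mathbf{w})\in Q_M$ (recall $M=3$ is fixed and $N$ was chosen large enough in Section~\ref{sec:domains}), and Lemma~\ref{lem:initial-iterates} states exactly that $(\phi_{\mathbf{w}}\circ g^{-1})^M\circ\phi_{\mathbf{w}}$ maps $Q_M$ into $\D(\tfrac{1}{2},\tfrac{1}{8})\subseteq\overline{\D}(\tfrac{1}{2},\tfrac{1}{8})$. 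Note also that $\overline{\D}(\tfrac{1}{2},\tfrac{1}{8})\subseteq\D_{3/4}$, so $g_{\mathbf{w}}$, $\phi_{\mathbf{w}}$ and all the centers are indeed defined for every $\mathbf{w}\in\mathcal{W}$. Continuity of $\Phi$ with respect to the product topology is then the second assertion of Lemma~\ref{lem:holom-dependence-of-centers}.

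Finally, $\mathcal{W}$ is a compact, convex, metrizable subset of the locally convex topological vector space $\C^{\N_N}$: compactness follows from Tychonoff's theorem since $\overline{\D}(\tfrac{1}{2},\tfrac{1}{8})$ is compact, and convexity and metrizability are immediate; in fact $\mathcal{W}$ is homeomorphic to the Hilbert cube. Hence the Schauder--Tychonoff fixed point theorem (equivalently, the classical fact that every continuous self-map of the Hilbert cube has a fixed point, which is deduced from Brouwer's theorem by finite-dimensional approximation) yields $\mathbf{w}\in\mathcal{W}$ with $\Phi(\mathbf{w})=\mathbf{w}$, that is, $w_n = c_{n+1}(\mathbf{w})$ for all $n\ge N$, as claimed.

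The two genuinely delicate ingredients---the self-map property and the continuity of $\mathbf{w}\mapsto c_n(\mathbf{w})$ despite $\mathbf{w}\mapsto\mu_{\phi_{\mathbf{w}}}$ failing to be continuous in $L^\infty$---have already been isolated in Lemmas~\ref{lem:initial-iterates} and~\ref{lem:holom-dependence-of-centers}, so given those the present lemma is essentially immediate. The only point requiring some care in the write-up is the precise formulation/citation of the fixed point theorem in infinitely many dimensions, since finite-dimensional Brouwer does not literally apply to $\mathcal{W}$; I would state it for a continuous self-map of a compact convex set in a locally convex space.
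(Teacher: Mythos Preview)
Your proof is correct and the ingredients you identify are exactly the right ones. The paper's own argument differs only in that it avoids invoking an infinite-dimensional fixed point theorem: instead of applying Schauder--Tychonoff to $\Phi$ on all of $\mathcal{W}$, it fixes $T\ge N$, freezes the tail $(w_{T+1},w_{T+2},\dots)$ arbitrarily, and applies Brouwer's theorem to the finite-dimensional map $\mathbf{w}'=(w_N,\dots,w_T)\mapsto(c_{N+1}(\mathbf{w}),\dots,c_{T+1}(\mathbf{w}))$ on the polydisc $\overline{\D}(\tfrac12,\tfrac18)^{T-N+1}$, obtaining a partial solution $\mathbf{w}_T$; then compactness of $\mathcal{W}$ gives a convergent subsequence $\mathbf{w}_{T_k}\to\mathbf{w}_\infty$, and Lemma~\ref{lem:holom-dependence-of-centers} shows $\mathbf{w}_\infty$ solves the full system. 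This is of course precisely the standard finite-dimensional approximation proof of the Schauder--Tychonoff theorem carried out by hand, so the two arguments are equivalent in content. Your route is slightly cleaner to state; the paper's is marginally more elementary in its prerequisites.
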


%%%%%
\begin{proof} 
Fix $T\ge N$ and take $\mathbf{w}'' = (w_{T+1}, w_{T+2}, \dots) \in \barD(\frac{1}{2}, \frac{1}{8})^{\N_{T+1}}$. For example, we can just set $\mathbf{w}''$ to be the constant sequence $w_j=\tfrac{1}{2}$ for all $j> T$. We first try to solve the finite shooting problem 
for 
$$
\mathbf{w}'=(w_N, w_{N+1}, \dots, w_T) \in \barD(\tfrac{1}{2}, \tfrac{1}{8})^{\N_N\setminus\N_{T+1}}.
$$  
Writing $\mathbf{w} = (\mathbf{w}', \mathbf{w}'')=(w_N, \dots) \in \barD(\frac{1}{2}, \frac{1}{8})^{\N_N}$, by Lemma~\ref{lem:holom-dependence-of-centers}, the function
$$
\begin{array}{ccc}
\barD(\frac{1}{2}, \frac{1}{8})^{\N_N\setminus \N_{T+1}} &\longrightarrow &  \barD(\frac{1}{2}, \frac{1}{8})^{\N_N\setminus \N_{T+1}}\vspace{5pt}\\
 \mathbf{w}' & \mapsto & (c_{N+1}(\mathbf{w}), \dots, c_{T+1}(\mathbf{w})) 
 \end{array}
$$
is a continuous mapping from a $(T-N+1)$-dimensional closed polydisc to itself.  By Brouwer's Fixed Point Theorem, this map has a fixed point, which is a solution of the equation
$$
w_n = c_{n+1}(\mathbf{w}),\quad \text{ for } N\leqslant n\leqslant \blue{T}.
$$  
Denote one of these solutions by $\mathbf{w}_{T}$.  
By the compactness of $\overline{\D}(\frac{1}{2}, \frac{1}{8})^{\N_N}$, we can find a convergent subsequence 
$\mathbf{w}_{T_k} \to \mathbf{w}_{\infty}$ as $k \to \infty$.  
By Lemma~\ref{lem:holom-dependence-of-centers}, $\mathbf{w}=\mathbf{w}_{\infty}$ is a solution for $w_n = c_{n+1}(\mathbf{w}) \text{ for } n \ge N.$ 
\end{proof}

\begin{rmk}
%\red{In the proof of Lemma~\ref{lem:shooting}, we use Rouch\'e's Theorem to solve the simultaneous shooting problem. At the end of Section~\ref{sec:proof-main-thm} we outline a modification of the construction in which $g_{\mathbf{w}}$ does not depend holomorphically on $\mathbf{w}$ where we replace Rouch\'e's Theorem by Brouwer's Fixed Point Theorem that only requires continuity. }
When $\phi_{\mathbf{w}}$ is constructed so that $\mu_{\phi_{\mathbf{w}}}$ is holomorphic with respect to~$w_n$ for $n\geqslant N$ (as in Definition~\ref{dfn:gw}), 
one can also use Rouch\'e's Theorem \redf{repeatedly, one coordinate at a time}. In that case we obtain that the solution is unique.
\end{rmk}

%\begin{rmk}
%It has been pointed out to us by Bishop that a similar approach has been developed independently by Lazebnik in a work that is not yet available.
%\end{rmk}

\section{Proof of the main theorem}
\label{sec:proof-of-main-thm}
\label{sec:proof-main-thm}

We are now ready to prove Theorem~\ref{thm:main}. Most of the work has already been done in the previous sections, but it remains to check that indeed the sets $\{U_n\}_n$ are contained in the grand orbit of a wandering domain $U$ of $f$ \redf{for all $n\geqslant N_1$}.

\begin{proof}[Proof of Theorem \ref{thm:main}]
Take the sequence $\mathbf{w}\in \overline{\D}(\frac{1}{2}, \frac{1}{8})^{\N_N}$ as in Lemma~\ref{lem:shooting}, and consider $f= f_{\mathbf{w}}$.  
By Lemma \ref{lem:inclusion}, $f^{n+2}(U_n) \subset U_{n+1}$ for $n \ge N_1$.  
So it follows by induction that 
\begin{equation}
\label{eq:main-thm-eq1}
f^{kN_1 + \frac{k(k+3)}{2}}(U_{N_1}) \subset U_{N_1 + k} \subset  \D\left(\tfrac{1}{2},\tfrac{1}{8}\right),\quad \text{ for all } k\in\N.
\end{equation}
This means that any point in $U_{N_1}$ has bounded derivatives for $f^{kN_1 + \frac{k(k+3)}{2}}$, 
hence cannot be a repelling periodic point.  Therefore $U_{N_1}$ is contained in the Fatou set, let $U$ be the Fatou component that contains $U_{N_1}$.  

Let us show that $U$ is a wandering domain.  
Assume to the contrary that $U$ is an eventually periodic Fatou component.  
Then \eqref{eq:main-thm-eq1} implies that there exists a finite limit function $l=\lim_{k \to \infty} f^{n_k}$ on $U_{N_1}$.  
\blue{Since functions in the class $\mathcal B$ have no escaping Fatou components \cite{eremenko-lyubich92}, $\infty$ cannot be a limit function, and by the classification of periodic Fatou components (see \cite[Section~4.2]{bergweiler93}), all other possible limit functions are bounded on $U_{N_1}\subseteq\D(\tfrac{1}{2},\tfrac{1}{8})$.}  
On the other hand,
\begin{equation}
\label{eq:main-thm-eq2}
f^{kN_1 + \frac{k(k+3)}{2}+N_1+k+1}(U_{N_1}) \subset f^{N_1+k+1}(U_{N_1 + k}) 
\subset \phi(D_{N_1+k}),\quad \text{ for all } k\in\N,
\end{equation}
and, by Lemma \ref{lem:phi(Q)-in-g(Q)}, $\phi(D_{N_1+k})\subseteq g(Q_{N_1+k})\subseteq \C\setminus \D(0,e^{x_{N_1+k}-1}/2)$ and we have $x_{N_1+k}\to +\infty$ as $k\to\infty$. This is a contradiction. Therefore $U$ is a wandering domain, which must be oscillating by \eqref{eq:main-thm-eq1} and \eqref{eq:main-thm-eq2}.  

It remains to check that the order of $f$ is $1$. By Lemma \ref{lem:qc-estimate1}, it suffices to check that $g=g_{\mathbf{w}}$ has order $1$. Suppose that the circle of radius $r>0$ intersects the set~$E_n$, so $h_n-2d_n\pi<r<h_n+3R_n$. The image of the arc that is contained in $E_n$ is mapped inside the ellipse $\mathcal E_{2d_n\pi}\subseteq \D(0,2\cosh(2d_n\pi))$ by $g$. But we have $g(r)>2\cosh(h_n-2d_n\pi)>2\cosh(2d_n\pi)$ as $h_n>4d_n\pi$ for all $n\geqslant 3$ by Lemma~\ref{lem:growth-estimates}. So the function $M(r,g_{\mathbf{w}})=\max_{|z|=r}|g_{\mathbf{w}}(z)|=M(r,2\cosh)$ for all sufficiently large $r>0$. Hence, $\rho(g_{\mathbf{w}})=\rho(\cosh)=1$, and this ends the proof of the main theorem.
\end{proof}

\section{Generalisation to entire functions of order $p/2$}

\label{sec:oder-half}

In this section we modify the previous construction to obtain functions of order $p/2$ for any $p\in\N$. It is of particular interest the case $p=1$ which give a function of order equal to one half. Recall that functions in the class $\mathcal B$ have lower order (and hence also order) greater or equal to one half \cite{heins48}. Therefore, the example that we provide here with $p=1$ has the lowest possible order in the class $\mathcal B$.

\begin{proof}[Proof of Theorem~\ref{thm:oder-half}]
Let $p \in \N$.  
Our model map is now 
\begin{equation}
\breve{g}_p(z) := 2 \cosh \bigl( z^{\frac{p}{2}} \bigr), \quad \vfive{\text{ for } z\in\C}.
\end{equation}  
Note that even if $p$ is an odd integer, $g_p$ is well-defined because $\cosh z$ is an 
even function.  
The reference orbit is now
\begin{equation}
\xi_0 := \tfrac{1}{2},\quad  \xi_n := \breve{g}_p^n(\xi_0),\quad \text{ for } n\in\N, 
\end{equation}
and the new critical orbit is given by $v_0:=0$ and $v_n := \breve{g}_p^{n}(v_0)$, for $n\in\N$.
Set 
\begin{equation}
x_n := \xi_n^{\frac{p}{2}},\quad \text{ for }n\in\N. 
\end{equation}
Then,
\red{$\xi_{n+1} = 2 \cosh x_{n} =e^{x_n}(1+o(1))$ with $\xi_{n+1}\ge e^{x_n}$} and 
$x_{n+1} = \xi_{n+1}^{\frac{p}{2}} \ge e^{\frac{p}{2} x_n}$ for $n\in\N$ and thus, the sequence $(x_n)_n$ 
also has a fast growth as in Lemma \ref{lem:growth-estimates}. We use $(x_n)_n$ to define the quantities $d_n$, $R_n$ and $h_n$ for $n\in\N$ as in Section~\ref{sec:setup}.  
We use exactly the same function $g_{\mathbf{w}}$ as before, which is an even function, and 
define 
\begin{equation}
g_{p, \mathbf{w}} (z) := g_{\mathbf{w}} \bigl( z^{\frac{p}{2}} \bigr),\quad \vfive{\text{ for } z\in \C}.
\end{equation}  
Let $\phi_{\mathbf{w}}$ be \vfive{the} $K$-quasiconformal mapping with $\mu_{\phi_\mathbf{w}} = \mu_{g_{p, \mathbf{w}}}$ \vfive{normalised with $\phi_\mathbf{w}(0)=0$ and $\phi_\mathbf{w}(1)=1$,} and define the entire function $f_{p, \mathbf{w}} := g_{p, \mathbf{w}} \circ \phi_{\mathbf{w}}^{-1}$. \blue{At this point one could redo the computations that we did for the function $g_\mathbf{w}$ for the function $g_{p,\mathbf{w}}$, but we choose a different approach in order to use as much as possible of what we know about $g_\mathbf{w}$.} 
The \vfive{Beltrami coefficient} $\mu_{\phi_{\mathbf{w}}}$ \vfive{is} the pull-back of $\mu_{g_{\mathbf{w}}}$ by the map 
$z \mapsto z^{\frac{p}{2}}$. In Diagram 1, we replace the quasiconformal map $\phi_\mathbf{w}$ by 
\begin{equation}
\psi_{\mathbf{w}}(z) := \phi_{\mathbf{w}}\bigl (z ^{\frac{2}{p}} \bigr),
\end{equation}
which is defined for $z \in \C \sminus \R_-$ and, \vfive{for $p>2$}, we use the branch of \vfive{the multivalued map} $\vfive{z\mapsto\,} z ^{\frac{2}{p}}$ \blue{with image $A_p:=\{z\in\C\ :\ |\text{arg}\, z|<\frac{2\pi}{p}\}$} \vseven{(see Diagram 2).}
$$
\xymatrix{
\C & \ar[l]_{\phi_{\mathbf{w}}} A_p\ar@/^2.0pc/[rr]^{g_{p,\mathbf{w}}} \ar[r]^{z^{\frac{p}{2}}} & \C\setminus \R_-\ar@/^2.0pc/[ll]^{\psi_{\mathbf{w}}} \ar[r]^{g_{\mathbf{w}}} & \C
}
$$
\begin{center}
{\sc Diagram 2.} The maps from the construction in Theorem~\ref{thm:oder-half}.
\end{center}
The entire function \vseven{$f_{p,\mathbf{w}}$} extends the composition of the three maps in the central row, namely
\begin{equation}
\vseven{f_{p,\mathbf{w}}=g_\mathbf{w}\circ \psi_\mathbf{w}^{-1}=g_{p,\mathbf{w}}\circ \phi_\mathbf{w}^{-1}.}
\end{equation}

\red{A derivative estimate shows that if $|\text{arg}\, \zeta| \le \pi/2$ and $r/|\zeta|<1/4$, then the aforementioned branch of $z^{2/p}$ maps $\D(\zeta, r)$ into 
$\D(\zeta^{2/p}, c |\zeta|^{2/p-1} r)$ with $c=\frac{2}{p}(3/4)^{2/p-1}$ (if $p \ge 2$) and $c=5/2$ (if $p=1$). The inverse images of $\pm ih_n$ by the map $z \mapsto z^{p/2}$ are the $p$-th roots of $-h_n^2$, which can be
 written as $ \zeta_{n,\ell} = \omega^{2\ell + 1}(h_n)^{2/p} = \omega^{2\ell+1} \xi_{n+1}(1+o(1))$, 
 where $\omega=e^{\pi i /p}$, for $0\leqslant \ell\leqslant p-1$. The support of $\mu_{\phi_{\mathbf{w}}}$ is contained in discs $B_m=\D(\zeta_{n, \ell}, r_n)$, 
where }
$$
\red{r_n = c  (h_n)^{2/p-1} 3R_n = 3c \frac{x_{n+1}^{2/p}}{x_n}(1+o(1))  = 3c \frac{\xi_{n+1}}{x_n}(1+o(1))\quad \text{ as } n\to\infty. }
$$ 
\blue{From Lemma~\ref{lem:growth-estimates}, the discs $\{B_m\}_m$ also satisfy the estimates required for Lemmas \ref{lem:qc-estimate1}, \ref{lem:qc-estimate2} and \ref{lem:qc-estimate3}.  
In particular, \vseven{$\psi_\mathbf{w}(\frac{1}{2}D_{n})$} contains a disc centered at \vseven{$\psi_\mathbf{w}(ih_{n})=\phi_\mathbf{w}(\zeta_{n,0})$} of radius  $R_n''=C_4'\frac{\xi_{n+1}}{x_n}$ for all $n\geqslant N$.  

\par
Let us suppose $p \ge 2$.  Then 
we take $Q_j=Q(x_j)$ as before, then $g(Q_j)$ contains the set $\{z\in\C\, :\, \tfrac{1}{2}e^{x_j}<|z|<2e^{x_j}\}\setminus \R_-$, while the image of $Q_{j+1}$ by 
the map $z\mapsto z^{2/p}$ is contained in the disc \vseven{$\D(x_{j+1}^{2/p}, cx_{j+1}^{2/p-1}(1+\pi)) 
= \D(\xi_{j+1}, c\frac{\xi_{j+1}}{x_{j+1}}(1+\pi))$. Since $\xi_{j+1}=e^{x_j}(1+o(1))$, we have 
$\psi_{\mathbf{w}}(Q_{j+1})\subseteq g(Q_j)$ for $j\geqslant M$ by Lemma~\ref{lem:qc-estimate1}. Similarly, from $\zeta_{n,0}=\omega e^{x_n}(1+o(1))$, we have $\psi_\mathbf{w}(D_n) \subset g(Q_n)$ for $n \ge N$.}  \par
For $p=1$, we make a special modification: for $n\geqslant M$, we replace the rectangle $Q_n$ by 
$Q'_n=Q'(x_n):= \{z \in \C: \, |\Re z-x_n| < 1, \ - \frac{\pi}{2} < \Im z < \frac{3\pi}{2}\}$ so that $g(Q'_n)$ covers 
$\psi(D_n)$, which in this case is near the negative real axis. It is immediate to check that in this case we still have that $\vseven{\psi_\mathbf{w}}(Q_{j+1}') \subset g(Q_j')$ for $j\geqslant M$ and $\vseven{\psi_\mathbf{w}}(D_n) \subset g(Q_n')$ for $n\geqslant N$.  
\par 
Let $\Psi = (f^{n+1}|_{U_n})^{-1}:\D(\psi(ih_n), R_n'') \to U_n$ for any $n\geqslant N$. Since 
$$
\psi_{\mathbf{w}}'(z)=\phi_{\mathbf{w}}'(z^{2/p}) \frac{2}{p} z^{\frac{2}{p}-1}
$$
and, by Lemma~\ref{lem:qc-estimate3}, $\log|\vseven{\phi'_\mathbf{w}}(\hat{c}_{n,j}(\mathbf{w})^{2/p})|$ is bounded,
we have the following estimate:
\begin{align}
\log \left( \tfrac{1}{4} |\Psi'(\psi(ih_n))| R_n'' \right) 
&\!=\! \sum_{j=0}^n \left( \log |\psi_\mathbf{w}'(\hat{c}_{n,j}(\mathbf{w}))| 
\!-\! \log |g_{\mathbf{w}}'(\hat{c}_{n,j}(\mathbf{w}))| \right) 
\!+\! \log R_n'' \!\nonumber\\ 
&\!=\! \sum_{j=0}^n \left(\!(\tfrac{2}{p}\!-\!1) \log |\hat{c}_{n,j}(\mathbf{w})| 
\!-\! \log |g_{\mathbf{w}}'(\hat{c}_{n,j}(\mathbf{w}))| \right) 
\!+\! \log R_n'' \!+\! O(n).\nonumber
\end{align}
As $\hat{c}_{n,j}(\mathbf{w}) \in Q(x_j)$ and $\log x_j = \frac{p}{2} \log \xi_j$ and $\log \xi_j = x_{j-1}+o(1)$, 
we have 
\begin{align}
\log \left( \tfrac{1}{4} |\Psi'(\psi(ih_n))| R_n'' \right) 
&\!=\! \sum_{j=0}^n \left(\! (\tfrac{2}{p}\!-\!1) \log x_j 
\!-\! x_j \right) \!+\! 
\log \left(C'' \frac{\xi_{n+1}}{x_n}\right)  
\!+\!O(n)\nonumber\\
&=  \sum_{j=1}^n \left( (1-\tfrac{p}{2}) x_{j-1}
- x_j \right) + x_{n} - \tfrac{p}{2}x_{n-1}+O(n) \nonumber\\
&= -\tfrac{p}{2}\sum_{j=0}^{n-1}  x_j - \tfrac{p}{2} x_{n-1} + O(n).
\end{align}
In Lemma \ref{lem:inner-radius}, instead of the inner radius $\{\rho_n\}_n$, we can take  
\begin{equation}
\rho_n' :=\exp\left( -nC_5' - \tfrac{p}{2}\sum_{j=0}^{n-2} x_j - px_{n-1}\right),\quad \text{ for } n\geqslant N,   
\end{equation}
for some constant $C_5'$, and then the rest of argument goes similarly with $d_n =\lfloor \frac{x_{n+1}}{x_n}\rfloor$ for $n\geqslant N$.}
\end{proof}

%\section{Entire functions with no zeros}
%
%\label{sec:nozeros}
%
%%Entire functions with no zeros are all of the form $f=\exp \circ\,g$ for some non-constant entire function $g$. P\'olya \cite{polya} showed that if an entire function with no zeros has finite order, then necessarily it is of the form $f=\exp\circ\,P$ with $P$ being a polynomial. It is easy to see that such functions have at most as many critical values as the degree of $P$ and $0$ is the only finite asymptotic value. Thus, entire functions of finite order with no zeros are all in the Speiser class $\mathcal S$ and hence, they have no wandering domains.
%
%In this section we prove Theorem~\ref{thm:nozeros}, which says that there exists a function $g\in\mathcal{B}$ such that $f=\exp\circ g\in\B$ has an oscillating wandering domain. To do so, we use the fact that $|2\cosh z-e^z|=|e^{-z}|=e^{-\text{Re}\,z}$ is small on the sets $Q_n$, for $n\geqslant 3$.
%
%\red{Change the goal in the shooting part. Check that the estimates still work, with the error introduced by the fact that $g(z)-e^z=e^{-z}$. Make sure that we can use the iterates with the same parity to go to $D_n$}

\begin{rmk}
\redf{Mihaljevi\'{c}-Brandt and Rempe-Gillen \cite[Theorem~6.1]{mihaljevicbrandt-rempegillen13} proved that if $f\in\mathcal B$ has order $\rho(f)<1$, $f(\R)\subseteq \R$ and all the zeros of $f$ lie in the negative real line, then $f$ does not have wandering domains. When $p=1$, our construction provides a function $f_{1,\mathbf{w}}\in\mathcal B$ that is real and has order $\rho(f_{1,\mathbf{w}})=1/2$. However, despite the zeros of the base function $\breve{g}_1(z) := 2 \cosh \bigl( z^{\frac{1}{2}} \bigr)$ lie in the negative real line, this does not contradict our result as the zeros of $f_{1,\mathbf{w}}$, which are the image by $\phi_\mathbf{w}$ of the zeros of $g_{1,\mathbf{w}}$, are not in the negative real line.}
\end{rmk}

\section{Real-symmetric version of the construction}

\label{sec:real-symmetric-version}

As we mentioned in the introduction, the base function $g(z)=2\cosh z$ is both even and symmetric with the real line. However, the modified function $g_\mathbf{w}$ is even but not real-symmetric. To construct functions with an arbitrary number of grand orbits of wandering domains, it is more convenient to use a slightly different modification $\tilde{g}_\mathbf{w}$ of $g$ that is symmetric with respect to $\R$ (but not even).

\begin{dfn}[$K$-quasiregular map $\tilde{g}_{\mathbf{w}}$]
\label{dfn:gw-2}
For $n\in \N$, let $d_n\in\N$ and $R_n,h_n\in\R_+$ be the numbers given in Definition~\ref{dfn:hn-dn-Rn-En-Dn-Qn} and, for $n\geqslant 3$, let the sets $E_{\pm n}$, $D_{\pm n}$ and the function $G_n:E\to \overline{\mathcal{E}}_{2d\pi}$ be as in Definition~\ref{dfn:gw}. For every sequence $\mathbf{w} = (w_{N}, w_{N+1}, w_{N+2}, \dots) \in \D_{3/4}^{\N_N}$, define the function $\tilde{g}_{\mathbf{w}}: \C \to \C$ as follows:  
\begin{align}
\tilde{g}_{\mathbf{w}}(z): = 
\begin{cases} 
G_n(z \mp ih_n), \ &\text{ if } z\in E_{\pm n} \sminus D_{\pm n}   \text{ with } n \geqslant N, \\
\rho_{w_n} \circ G_n(z - ih_n), \ &\text{ if } z\in  D_{n}  \text{ with } n \geqslant N, \\
\rho_{\overline{w_n}} \circ G_n(z + ih_n), \ &\text{ if } z\in  D_{n}  \text{ with } n \leqslant -N, \\
2 \cosh z, \ &\text{ otherwise. }
\end{cases}
\end{align}%\margin{problem with\\ holomorphic\\ dependence?}
Then $\tilde{g}_{\mathbf{w}}$ is a $K$-quasiregular map such that 
$$
\supp \mu_{\tilde{g}_{\mathbf{w}}} \subseteq \bigcup_{n \in \Z_N} E_n \setminus \D\left(ih_n, \vseven{\left(\tfrac{1}{8}\right)^{1/(2d_n)}R_n}\right),
$$ 
and $\tilde{g}_{\mathbf{w}}(\overline{z})=\overline{\tilde{g}_{\mathbf{w}}(z)}$ for all $z\in\C$.
\end{dfn}

\begin{dfn}[Entire function $\tilde{f}_{\mathbf{w}}$ and $K$-quasiconformal map $\tilde{\phi}_{\mathbf{w}}$]
\label{dfn:tilde-fw-phiw}
Let $\tilde{g}_{\mathbf{w}}$ be the $K$-quasiregular map from Definition~\ref{dfn:gw-2}. By the Measurable Riemann Mapping Theorem \cite[Theorem~3]{ahlfors06}, 
there exists a unique $K$-quasiconformal mapping $\tilde{\phi}_{\mathbf{w}} : \C \to \C$ such that 
$\tilde{\phi}_{\mathbf{w}}(0) =0$, $\tilde{\phi}_{\mathbf{w}}(1)=1$ and $\mu_{\tilde{\phi}_{\mathbf{w}}} = \mu_{\tilde{g}_{\mathbf{w}}}$.  
Then define 
\begin{equation}
\tilde{f}_{\mathbf{w}} := \tilde{g}_{\mathbf{w}} \circ \tilde{\phi}_{\mathbf{w}}^{-1}, 
\end{equation}
which is an entire function \vfive{in the class $\mathcal{B}$ with $S(f_\mathbf{w})=\{\pm 2\}\cup
\overline{\{w_n,\overline{w_n}\}}_n$}. Since $\tilde{g}_\mathbf{w}$ (and, in particular, $\mu_{\tilde{g}_{\mathbf{w}}}$) is symmetric with respect to the real line, it follows that $\tilde{\phi}_\mathbf{w}$ preserves the real line (see \cite[Exercise~1.4.1]{branner-fagella14}), and so does $\tilde{f}_\mathbf{w}$. 
\end{dfn}

Recall that in Lemma~\ref{lem:escaping-real-orbits} we showed that the base map $g(z)=2\cosh z$ satisfies $\R\subseteq I(g)\subseteq J(g)$. We now prove that the same holds for the function $\tilde{f}_\mathbf{w}$ for any $\mathbf{w}\in\D_{3/4}^{\N_N}$ provided that $N\in\N$ is large enough.

\begin{lem}
\label{lem:real-symmetry}
The functions $\tilde{\phi}_\mathbf{w}$ and $\tilde{f}_{\mathbf{w}}$ are symmetric with respect to the real line. If $N\in\N$ in Definition~\ref{dfn:gw-2} is sufficiently large, then $\R\subseteq I(\tilde{f}_{\mathbf{w}})\subseteq J(\tilde{f}_{\mathbf{w}})$.
\end{lem}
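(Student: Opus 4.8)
The first assertion is essentially a restatement of a fact already recorded in Definition~\ref{dfn:fw-phiw}. Since $\tilde{g}_{\mathbf{w}}(\bar z)=\overline{\tilde{g}_{\mathbf{w}}(z)}$, its Beltrami coefficient satisfies $\mu_{\tilde{g}_{\mathbf{w}}}(\bar z)=\overline{\mu_{\tilde{g}_{\mathbf{w}}}(z)}$, so $z\mapsto\overline{\tilde{\phi}_{\mathbf{w}}(\bar z)}$ is a $K$-quasiconformal map fixing $0$ and $1$ with the same Beltrami coefficient; by the uniqueness part of the Measurable Riemann Mapping Theorem it coincides with $\tilde{\phi}_{\mathbf{w}}$, so $\tilde{\phi}_{\mathbf{w}}$, and hence $\tilde{f}_{\mathbf{w}}=\tilde{g}_{\mathbf{w}}\circ\tilde{\phi}_{\mathbf{w}}^{-1}$, commutes with complex conjugation (cf.\ \cite[Exercise~1.4.1]{branner-fagella14}). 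In particular $\tilde{\phi}_{\mathbf{w}}$ restricts to an orientation-preserving homeomorphism of $\R$ fixing $0$ and $1$, hence maps $\R_{+}$ onto $\R_{+}$ and $\R_{-}$ onto $\R_{-}$, and the same holds for $\tilde{\phi}_{\mathbf{w}}^{-1}$.

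For $I(\tilde{f}_{\mathbf{w}})\subseteq J(\tilde{f}_{\mathbf{w}})$ I would just invoke $\tilde{f}_{\mathbf{w}}\in\mathcal B$ (Definition~\ref{dfn:fw-phiw}) together with \cite[Theorem~1]{eremenko-lyubich92}. The substantive claim is $\R\subseteq I(\tilde{f}_{\mathbf{w}})$. First I would note that $\supp\mu_{\tilde{g}_{\mathbf{w}}}$ is contained in the same family of squares $\{E_n\}_{n\in\Z_N}$ as for $g_{\mathbf{w}}$, so the argument of Lemma~\ref{lem:assumption} applies verbatim with $\phi=\tilde{\phi}_{\mathbf{w}}$ and the reindexed discs $\{3D_n\}_{n\in\Z_N}$; thus Lemma~\ref{lem:qc-estimate1} is available for $\tilde{\phi}_{\mathbf{w}}$. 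Fix a small $\eps>0$ (to be constrained below); by taking $N$ large enough, Lemma~\ref{lem:qc-estimate1} gives $e^{-\eps}|z|\leqslant|\tilde{\phi}_{\mathbf{w}}(z)|\leqslant e^{\eps}|z|$ for all $z\neq 0$, and hence also $e^{-\eps}|z|\leqslant|\tilde{\phi}_{\mathbf{w}}^{-1}(z)|\leqslant e^{\eps}|z|$.

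Since $h_n>4d_n\pi$ for all $n\geqslant 3$ (a consequence of Lemma~\ref{lem:growth-estimates}, as used at the end of the proof of Theorem~\ref{thm:main}), the squares $E_{\pm n}$ are disjoint from $\R$, so $\tilde{g}_{\mathbf{w}}$ agrees with $g(z)=2\cosh z$ on $\R$. As $\tilde{\phi}_{\mathbf{w}}^{-1}$ preserves $\R$, this gives, for every $x\in\R$,
$$
\tilde{f}_{\mathbf{w}}(x)=2\cosh\bigl(\tilde{\phi}_{\mathbf{w}}^{-1}(x)\bigr)=2\cosh\bigl(|\tilde{\phi}_{\mathbf{w}}^{-1}(x)|\bigr)\geqslant 2\cosh\bigl(e^{-\eps}|x|\bigr)\geqslant\max\bigl\{2,\,e^{e^{-\eps}|x|}\bigr\},
$$
so in particular $\tilde{f}_{\mathbf{w}}(\R)\subseteq[2,+\infty)$. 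Choosing $\eps$ small enough that $e^{2e^{-\eps}}\geqslant 6$ and $\eps<\log 2$, the map $x\mapsto e^{e^{-\eps}x}/x$ is increasing on $[2,+\infty)$ with value at least $3$ there, so $\tilde{f}_{\mathbf{w}}(x)\geqslant 3x$ whenever $x\geqslant 2$. It remains to see that the orbit of every $x\in\R$ reaches $[2,+\infty)$ in at most two steps: if $x\neq 0$ then $\tilde{f}_{\mathbf{w}}(x)=2\cosh(\tilde{\phi}_{\mathbf{w}}^{-1}(x))>2$, while if $x=0$ then $\tilde{f}_{\mathbf{w}}(0)=2$ and $\tilde{f}_{\mathbf{w}}^{2}(0)=2\cosh(\tilde{\phi}_{\mathbf{w}}^{-1}(2))\geqslant 2\cosh(2e^{-\eps})>2$. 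Once a real iterate lies in $[2,+\infty)$, the bound $\tilde{f}_{\mathbf{w}}\geqslant 3\,\mathrm{id}$ there forces $\tilde{f}_{\mathbf{w}}^{n}(x)\to+\infty$; hence $\R\subseteq I(\tilde{f}_{\mathbf{w}})$.

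The computations are elementary, and the only points that need some care — which I would regard as the (mild) main obstacle — are verifying that Assumption~\ref{ass:qu-estimates} transfers to $\tilde{\phi}_{\mathbf{w}}$ exactly as in Lemma~\ref{lem:assumption} (so that $N$ can be chosen independently of $\mathbf{w}$), and checking that a single small $\eps$ simultaneously produces the uniform expansion factor on $[2,+\infty)$ and disposes of the critical starting point $x=0$.
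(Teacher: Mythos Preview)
Your argument is correct. The symmetry part and the inclusion $I(\tilde f_{\mathbf w})\subseteq J(\tilde f_{\mathbf w})$ are handled exactly as in the paper. For $\R\subseteq I(\tilde f_{\mathbf w})$ you take a genuinely different route: the paper invokes the \emph{derivative} estimate of Lemma~\ref{lem:qc-estimate3} to get $(\tilde\phi_{\mathbf w}^{-1})'(x)\ge 1/C_3$ on $\R$, deduces $\tilde f_{\mathbf w}'(x)>1$ for $x>2$, and then appeals to the escape mechanism of Lemma~\ref{lem:escaping-real-orbits}. You instead use only the \emph{modulus} estimate of Lemma~\ref{lem:qc-estimate1} to obtain $|\tilde\phi_{\mathbf w}^{-1}(x)|\ge e^{-\eps}|x|$, which feeds directly into $2\cosh$ and yields the multiplicative expansion $\tilde f_{\mathbf w}(x)\ge 3x$ on $[2,\infty)$. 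Your approach is a bit more elementary in that it avoids Lemma~\ref{lem:qc-estimate3} altogether and gives a stronger (geometric rather than additive) rate of escape; the paper's approach has the virtue of paralleling Lemma~\ref{lem:escaping-real-orbits} word for word. Both rely on the same observations that $\tilde g_{\mathbf w}\equiv 2\cosh$ on $\R$ and that Assumption~\ref{ass:qu-estimates} carries over to $\tilde\phi_{\mathbf w}$.
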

\begin{proof}
To show that $\R\subseteq I(\tilde{f}_\mathbf{w})$, recall that, by Lemma~\ref{lem:qc-estimate3}, we know there exists $C_3>1$ such that
$$
(\tilde{\phi}_\mathbf{w}^{-1})'(x)\geqslant \frac{1}{C_3},\quad \text{ for all } x\in\R,
$$
and we can make this constant be arbitrarily close to $1$ by choosing \blueb{$N$} sufficiently large. In particular, \blueb{we can put $\eta=1$ in Lemma~\ref{lem:qc-estimate3} and choose $N\geqslant M_2(\eta)$ so that $1<C_3<2$.} Then, 
$$
\tilde{f}_\mathbf{w}'(x)-1=2\sinh(\tilde{\phi}_\mathbf{w}^{-1}(x))(\tilde{\phi}_\mathbf{w}^{-1})'(x)-1\geqslant \frac{2x}{C_3^2}-1>0,\quad \text{ for } x>2,
$$
and since
$$
\tilde{f}_\mathbf{w}(2)-2\geqslant 2\cosh(1)-2>1,
$$
arguing as in Lemma~\ref{lem:escaping-real-orbits}, we obtain that $\R\subseteq I(\tilde{f}_\mathbf{w})\subseteq J(\tilde{f}_\mathbf{w})$.
\end{proof}

\begin{lem}
\label{lem:separation}
The curves $L_k:=\tilde{\phi}_\mathbf{w}(\{z\in\C\, :\, \textup{Im}\,z=k\pi\})$, $k\in\Z$, are a subset of $J(\tilde{f}_\mathbf{w})$. For every different $n_1,n_2\in\Z_N$, there is $k\in\Z$ such that the discs $\D(\tilde{\phi}_\mathbf{w}(ih_{n_1}),R_{n_1}')$ and $\D(\tilde{\phi}_\mathbf{w}(ih_{n_2}),R_{n_2}')$ lie in different complementary components of $L_k$.
\end{lem}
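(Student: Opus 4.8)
The plan is to prove the two assertions separately, using Lemma~\ref{lem:real-symmetry} (so that $\R = \tilde\phi_\mathbf{w}(\R) \subseteq J(\tilde f_\mathbf{w})$) together with the conjugacy identity $\tilde f_\mathbf{w}\circ\tilde\phi_\mathbf{w} = \tilde g_\mathbf{w}$. Write $\ell_k := \{z\in\C : \textup{Im}\,z = k\pi\}$, so $L_k = \tilde\phi_\mathbf{w}(\ell_k)$ and $\tilde f_\mathbf{w}(L_k) = \tilde g_\mathbf{w}(\ell_k)$. By \eqref{eq:growth-estimate-infinitely-many} the consecutive squares satisfy $h_n + 2d_n\pi < h_{n+1} - 2d_{n+1}\pi$ with a gap far larger than $\pi$ (recall $2d_n\pi < 3R_n$), and $h_n > 2d_n\pi$; hence for every $k$ with $k\pi$ in one of these gaps, and for $k=0$, the line $\ell_k$ is disjoint from $\bigcup_{n\in\Z_N}E_n$, so $\tilde g_\mathbf{w}$ coincides with $g(z)=2\cosh z$ on all of $\ell_k$. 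Since $2\cosh(x+ik\pi)=2(-1)^k\cosh x$, this gives $\tilde f_\mathbf{w}(L_k) = g(\ell_k) = (-1)^k[2,\infty) \subseteq \R \subseteq J(\tilde f_\mathbf{w})$, and by complete invariance of the Julia set $L_k \subseteq \tilde f_\mathbf{w}^{-1}(J(\tilde f_\mathbf{w})) = J(\tilde f_\mathbf{w})$. These are exactly the curves needed for the second assertion.

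For the separation, fix distinct $n_1,n_2\in\Z_N$. If $n_1,n_2$ have opposite signs, take $k=0$: then $L_0 = \tilde\phi_\mathbf{w}(\R) = \R$, and since $\tilde\phi_\mathbf{w}$ is symmetric with respect to $\R$ while $\tfrac12 D_{\pm n}\subseteq \pm\mathbb{H}$, the discs $\D(\tilde\phi_\mathbf{w}(ih_{n_i}),R_{n_i}') \subseteq \tilde\phi_\mathbf{w}(\tfrac12 D_{n_i})$ (Lemma~\ref{lem:inside-half-disk}) lie in the two components of $\C\sminus\R$. If $n_1,n_2$ have the same sign, I may assume $N\le n_1 < n_2$ by the symmetry $z\mapsto\bar z$, and I pick an integer $k$ with $h_{n_1}+2d_{n_1}\pi < k\pi < h_{n_1+1}-2d_{n_1+1}\pi$ (possible because this interval has length $\gg\pi$). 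This $\ell_k$ avoids every square (the squares are ordered and pairwise disjoint in height), so $L_k\subseteq J(\tilde f_\mathbf{w})$ by the first part; moreover $k\pi > h_{n_1}+2d_{n_1}\pi > h_{n_1}+\tfrac12 R_{n_1}$ and, comparing $h_{n_1+1}$ with $h_{n_2}$ via Lemma~\ref{lem:growth-estimates} when $n_2>n_1+1$, $k\pi < h_{n_1+1}-2d_{n_1+1}\pi \le h_{n_2}-\tfrac12 R_{n_2}$, so $\ell_k$ separates $\tfrac12 D_{n_1}$ from $\tfrac12 D_{n_2}$ in $\C$. Since $\tilde\phi_\mathbf{w}$ extends to a homeomorphism of $\widehat{\C}$ fixing $\infty$ and $\ell_k\cup\{\infty\}$ is a Jordan curve, $L_k\cup\{\infty\}$ is a Jordan curve separating $\tilde\phi_\mathbf{w}(\tfrac12 D_{n_1})$ from $\tilde\phi_\mathbf{w}(\tfrac12 D_{n_2})$, hence separating $\D(\tilde\phi_\mathbf{w}(ih_{n_1}),R_{n_1}')$ from $\D(\tilde\phi_\mathbf{w}(ih_{n_2}),R_{n_2}')$ by Lemma~\ref{lem:inside-half-disk}.

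The delicate point is the first assertion. Taken literally it cannot hold for every $k$: if $\ell_k$ meets a disc $\tfrac12 D_n$, then $\tilde\phi_\mathbf{w}(\ell_k\cap\tfrac12 D_n)\subseteq\tilde\phi_\mathbf{w}(\tfrac12 D_n)$, and for the parameters produced by the shooting argument $\tilde\phi_\mathbf{w}(\tfrac12 D_n)$ lies in the Fatou set, since $\tilde f_\mathbf{w}$ maps it onto the tiny disc $\D(w_n,(\tfrac12)^{2d_n})\subseteq U_{n+1}$ for $n$ large. So the argument above should be read as establishing $L_k\subseteq J(\tilde f_\mathbf{w})$ for exactly those $k$ whose horizontal line avoids $\bigcup_n E_n$; this is harmless, since the separating curve produced in the second assertion is always of this type and the first assertion is used nowhere else. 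What remains is routine bookkeeping: checking that the gap between consecutive squares contains an integer multiple of $\pi$ and verifying the half-plane inclusions, both via Lemma~\ref{lem:growth-estimates} and Lemma~\ref{lem:inside-half-disk}.
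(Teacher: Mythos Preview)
Your argument is correct and follows the same underlying idea as the paper, though you are considerably more thorough. The paper's proof is a two-sentence sketch that only addresses the separation claim: it invokes Lemma~\ref{lem:inside-half-disk} to get $\D(\tilde\phi_\mathbf{w}(ih_n),R_n')\subseteq\tilde\phi_\mathbf{w}(\tfrac12 D_n)$ and then appeals to $\tilde\phi_\mathbf{w}$ being a plane homeomorphism together with the existence of a horizontal line $\ell_k$ separating $\tfrac12 D_{n_1}$ from $\tfrac12 D_{n_2}$. The Julia-set inclusion $L_k\subseteq J(\tilde f_\mathbf{w})$ is not argued at all in the paper's proof; your route via $\tilde f_\mathbf{w}(L_k)=\tilde g_\mathbf{w}(\ell_k)=g(\ell_k)\subseteq\R\subseteq J(\tilde f_\mathbf{w})$ and complete invariance is the intended one and fills that gap.

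Your observation that the first assertion cannot hold for \emph{every} $k\in\Z$ is also correct and worth noting: once $\mathbf{w}$ is chosen via the shooting argument, any $\ell_k$ meeting $\tfrac12 D_n$ for large $n$ has $\tilde\phi_\mathbf{w}(\ell_k\cap\tfrac12 D_n)$ inside the wandering domain, hence in $F(\tilde f_\mathbf{w})$. The paper's later use of the lemma (in the proof of Theorem~\ref{thm:infinitely-many}) is consistent with your reading: it only invokes curves $L_k$ ``that lie in the Julia set'', namely those with $\ell_k$ disjoint from $\bigcup_n E_n$, which are exactly the separating curves you produce. So the lemma as stated is slightly imprecise, and your restriction to $k$ with $\ell_k\cap\bigcup_n E_n=\emptyset$ is both necessary and sufficient for the application.
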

\begin{proof}
Recall that by Lemma~\ref{lem:inside-half-disk}, $\D(\tilde{\phi}_\mathbf{w}(ih_{n}),R_{n}')\subseteq \tilde{\phi}_\mathbf{w}(\tfrac{1}{2}D_n)$ for all $n\in\Z_N$. The result follows from the fact that $\tilde{\phi}_\mathbf{w}$ is a plane homeomorphism and that there is a line of the form $\{z\in\C\, :\, \textup{Im}\,z=k\pi\}$ for some $k\in\Z_N$ that separates $\tfrac{1}{2}D_{n_1}$ and $\tfrac{1}{2}D_{n_2}$.
\end{proof}

%\begin{rmk}
%We want to remark that although it simplifies some of the proofs (for example, it is useful when showing that the wandering domains are bounded), our %construction does not require that $\phi_\mathbf{w}(\R)=\R$. Thus, in order to construct functions with an odd number of wandering domains, we may use a %different definition of $g_\mathbf{w}$ that is not symmetric with respect the real line (see Remark~\ref{rmk:odd-number-wd}).
%\end{rmk}

\section{Functions with any number of wandering domains}

\label{sec:infinitely-many}

Now we prove Theorem~\ref{thm:infinitely-many}, which says that we can obtain functions of finite order in the class $\B$ with $2q$ grand orbits of oscillating wandering domains for any $q\in\N\cup\{\infty\}$. \red{We use the $\R$-symmetric version of the construction introduced in the previous section.} The strategy consists of introducing a sequence of sets $\{E_n^j\}_{n\geqslant 3+j}$ for each $0\leqslant j<q$ that creates a new grand orbit of wandering domains (see Figure~\ref{fig:setup3}). We can do this because the distance between the \blueb{original} sets $E_n$ and $E_{n+1}$ tends to infinity sufficiently fast and also because $g(Q_n)$ is much larger than $E_n$. To prove that the resulting function has no other wandering domains apart from the ones given by the construction we follow \cite[Theorem~A]{fagella-godillon-jarque15}.

%In \cite{fagella-godillon-jarque15} it was shown that the function from \cite[Theorem~17.1]{bishop15} has exactly two grand orbits of wandering domains, namely one that is eventually contained in the upper half plane, and its complex conjugate as $f(\overline{z})=\overline{f(z)}$. And so do all modifications of this function in \cite{fagella-godillon-jarque15,kirill, }. Since.... it is not easy to see how to modify Bishop's construction to obtain infinitely many grand orbits of wandering domains. In contrast, it is very easy to modify our construction to produce functions with any number of grand orbits of wandering domains, including functions with infinitely many grand orbits of wandering domains. 

\begin{proof}[Proof of Theorem~\ref{thm:infinitely-many}]
Fix $q\in\N\cup\{\infty\}$. Let $d_n\in \N$, $h_n\in 2\pi\N$ and $R_n>0$ for $n\in\N$ be as in Definition~\ref{dfn:hn-dn-Rn-En-Dn-Qn}. \blue{Then, for $n\geqslant 3$ and $0\leqslant j< n-2$, put
\begin{equation}
h_{n}^j:=h_n+j6R_n
\end{equation}
so that $h_{n}^0=h_n$ and $h_{n}^j-h_n=j6R_n=j(6d_n\pi-2\pi)\in 2\pi\N$. For $n\geqslant 3$ and $0\leqslant j< \min\{n-2,q\}$, define the sets
\begin{equation}
\begin{array}{c}
E_{\pm n}^j:=\{z\in \C\, :\, |\textup{Re}\,z|<2d_n\pi,\ |\textup{Im}\,z\mp h_{n}^j|<2d_n\pi\}=E_{\pm n}\pm j6R_n,\vspace{5pt}\\
D_{\pm n}^j:=\D(\pm ih_{n}^j,R_n)=D_{\pm n}\pm j6R_n\subseteq E_{\pm n}^j.
\end{array}
\end{equation}
Observe that at the level $n=3$ we only have the square $E_3^0=E_3$, when $n=4$ we have $E_4^0=E_4$ and $E_{4}^1=E_4+6R_4$, and we keep adding one square at a time as we increase $n$. Since $h_{n+1}-3R_{n+1}>h_n+3R_n+6nR_n$ by the inequality (3) in Lemma~\ref{lem:growth-estimates}, the squares $\{E_{\pm n}^j\}_{n,j}$ are all disjoint (see Figure~\ref{fig:setup3}).

\blue{For $0\leqslant j< q$, define $N_j:=\max\{N,3+j\}$ and let 
\begin{equation}
\mathbf{w}^j=(w_{N_j}^j,w_{N_j+1}^j,\hdots)\in \D_{3/4}^{\mathbb N_{N_j}}.
\end{equation}
Then, we write $\mathbf{w}=(\mathbf{w}^0, \mathbf{w}^1,\hdots)$. In this case we define the function $\tilde{g}_{\mathbf{w}}$ as in Definition~\ref{dfn:gw-2} but with more squares. For $N\geqslant 3$, define
\begin{equation}
\tilde{g}_{\mathbf{w}}(z): = 
\begin{cases} 
G_n(z \mp ih_{n}^j),  &\!\!\!\text{if } z\in E_{\pm n}^j \sminus D_{\pm n}^j \text{ with } n \geqslant N, \ 0\leqslant j<\min\{n-2,q\}, \\
\rho_{w_{n}^j} \circ G_n(z - ih_{n}^j),  &\!\!\!\text{if } z\in  D_n^j  \text{ with } n \geqslant N,\ 0\leqslant j<\min\{n-2,q\}, \\
\rho_{\overline{w_{n}^j}} \circ G_n(z + ih_{n}^j),  &\!\!\!\text{if } z\in D_n^j  \text{ with } n \leqslant -N,\ 0\leqslant j<\min\{n-2,q\}, \\
2 \cosh z,  &\!\!\!\text{otherwise,}
\end{cases}
\end{equation}
}where $G_n:E\to \overline{\mathcal{E}}_{2d\pi}$ is the quasiregular mapping from Lemma~\ref{lem:key-interpolation} for $d=d_n$ and $R=R_n$. Then, the Measurable Riemann Mapping Theorem gives an entire function $\tilde{f}_{\mathbf{w}}$ and quasiregular map $\tilde{\phi}_{\mathbf{w}}$ such that $\tilde{f}_{\mathbf{w}}=\tilde{g}_{\mathbf{w}}\circ \tilde{\phi}_{\mathbf{w}}^{-1}$.

\vfour{There is $N\geqslant 3$ sufficiently large such that if we define
\begin{equation}
B_{m}:=\D(ih_{n}, h_{n}^{n-3}-h_{n}+3R_n),\quad \text{ for } m\in\N,
\end{equation}
where, after reindexing, $m\in\N$ corresponds to $n\in\Z_N$ as before. For $n\in\Z_N$, we have $E_{n}^j\subseteq B_{m}$  for all $0\leqslant j<\min\{n-2,q\}$ and,} by inequality \eqref{eq:growth-estimate-infinitely-many} in Lemma~\ref{lem:growth-estimates}, the discs~\blueb{$\{B_m\}_m$} \vfive{and the $K$-quasiconformal map $\tilde{\phi}_\mathbf{w}$} satisfy the Assumption~\ref{ass:qu-estimates}.} %Moreover, any two such discs are at least a distance $2\pi$ apart. We may define $B_{2m}$ and $B_{2m+1}$ to be the $m$th discs in the sequences $\{B_{2m}^k\}_{m,k}$ and $\{B_{2m+1}^k\}_{m,k}$, respectively, ordered by the imaginary part of their points.%Then, any two of such sets $B_m$ and $B_{m'}$ are at least a distance $2\pi$ apart. \margin{is there a\\  line in $J(f)$ separating\\ them?\\ ~\\ remove the\\ negative $h_n$?}

\begin{figure}[h]
\def\svgwidth{.75\linewidth}
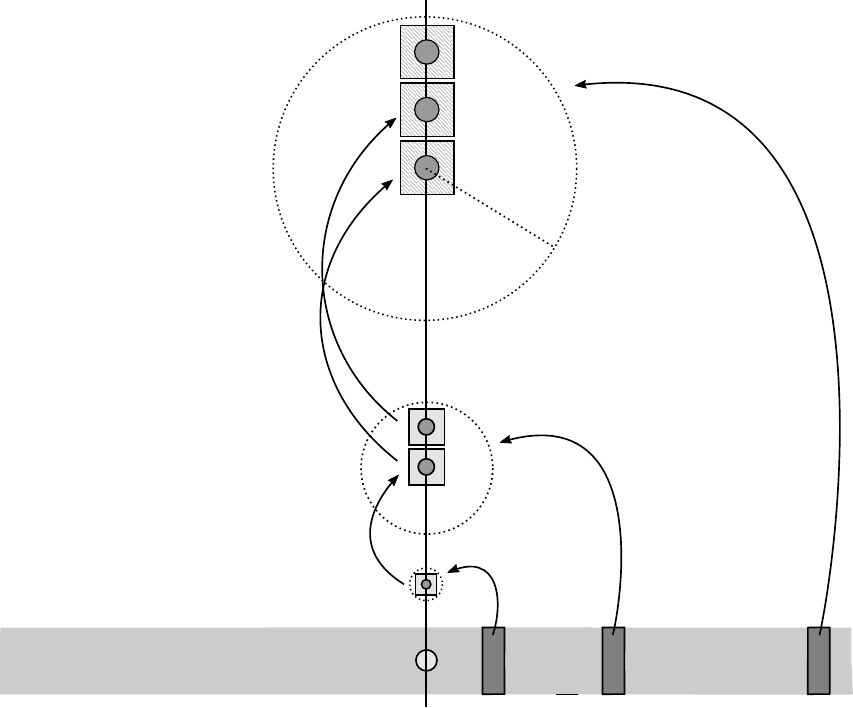
\caption{Sketch of the squares $\{E_n^j\}_{n,j}$ and one possible indexing of the discs $\{B_m\}_m$ for $N=3$ in the construction of Theorem~\ref{thm:infinitely-many}.}
\label{fig:setup3}
\end{figure}

\blue{Since the size of the discs $\{B_{m}\}_m$ is larger now,} we may need to increase the value of $N$. But it is clear that after that, $\phi(Q_{j+1})\subseteq g(Q_j)$ for all $j\geqslant M$ and also \blue{$\phi(\tfrac{1}{2}D_{\pm n}^j)\supseteq \D(\phi(\pm ih_{n}^j),R_n')$ for all $n\geqslant N$ and $0\leqslant j<\blueb{\min\{n-2,q\}}$} (the values of $R_n'$ may be different from before). We need to check that \blue{$\phi(D_{\pm n}^j)\subseteq g(Q_n)$ for all $n\geqslant N$ and $0\leqslant j<\blueb{\min\{n-2,q\}}$}. This follows from the fact that by Lemma~\ref{lem:growth-estimates}, 
$$
\red{e^\varepsilon(h_{n}^{n-3}+3R_n)=\tfrac{3}{2}(h_n+6(n-3)R_n+3R_n)< 2x_{n+1}, \mbox{ for } n\geqslant \vfour{N},}
$$
where $\varepsilon=\log\tfrac{3}{2}$ as in the proof of Lemma~\ref{lem:phi(Q)-in-g(Q)}. It follows that, for \blueb{$n\geqslant N$ and $0\leqslant j< \min\{n-2,q\}$}, we can define a \blueb{domain $U_n^j$ in the same way that we defined the domain $U_n$ in Definition~\ref{dfn:domains-Un} but replacing the disc $\D(\phi(ih_n),R_n')$ by $\D(\phi(ih_{n}^j),R_n')$. Similarly, we can define the centers $c_n^j(\mathbf{w})\in U_n^j$ for $n\geqslant N$ and $0\leqslant j< \min\{n-2,q\}$.} \vfour{The estimates from Lemma~\ref{lem:inner-radius} show that $\D(c_n^j(\mathbf{w}),\rho_n)\subseteq U_n^j$ for all $n\geqslant N$ and $0\leqslant j< \min\{n-2,q\}$. Finally, we can solve the simultaneous shooting problem
\begin{equation}
w_n^j=c_{n+1}^j(\mathbf{w}),\quad \text{ for } 0\leqslant j<q \text{ and } n\geqslant N_j:=\max\{N,3+j\},
\end{equation}
as in Lemma~\ref{lem:shooting}. Hence, there exists $N_1\geqslant N$ as in Lemma~\ref{lem:inclusion} such that
\begin{equation}
\tilde{f}_\mathbf{w}^{n+2}(U_n^j)\subseteq U_{n+1}^j,\quad \text{ for } 0\leqslant j<q \text{ and } n\geqslant N_j':=\max\{N_1,3+j\}.
\end{equation}
To show that the sets $U_n^j$ are contained in different Fatou components, we use the fact that $\tilde{f}_\mathbf{w}^{n+1}(U_n^j)\subseteq \tilde{\phi}_\mathbf{w}(\tfrac{1}{2}D_n^j)$ for all $0\leqslant j<q$ and $n\geqslant N_j'$ and the sets \red{$\{\tilde{\phi}_\mathbf{w}(D_n^j)\}_{n,j}$} are separated by curves of the form $L_k=\tilde{\phi}_\mathbf{w}(\{z\in\C\ :\ \text{Im}\,z=k\pi\})$ for $k\in\Z$ that lie in the Julia set as in Lemma~\ref{lem:separation}. Therefore the sets $\{U_n^j\}_{n,j}$ are contained in wandering domains, and for different values of $j$, these wandering domains belong to different grand orbits.

Up to now, we have shown that the entire function $\tilde{f}_\mathbf{w}\in\mathcal B$ has at least \red{$2q$} grand orbits of wandering domains. To conclude that $\tilde{f}_\mathbf{w}$ has exactly \red{$2q$} grand orbits of wandering domains, we will follow closely the results of \cite[Section~4]{fagella-godillon-jarque15}. 

Since $w_\infty$ is the only accumulation point of the singular values of $\tilde{f}_\mathbf{w}$ (see Remark~\ref{rmk:w-infty}), for every wandering domain $U$ of $\tilde{f}_\mathbf{w}$, a classical argument using normal families implies that there is a sequence $(n_k)_k$ such that $\tilde{f}_\mathbf{w}^{n_k}\vert_U\to w_\infty$ as $k\to\infty$ and \red{$\tilde{f}_\mathbf{w}^{n_k-1}(U)\subseteq \bigcup_{n,j}\tilde{\phi}_\mathbf{w}(D_{\pm n}^j)$} for all $k\in\N$ (see \red{\cite[Lemma~4.1]{fagella-godillon-jarque15}}). Then, it follows from \cite[Theorem~4.1]{mihaljevicbrandt-rempegillen13} that the forward invariant closed set \vspace{-5pt}
\begin{equation}
A:=\red{\R}\cup \bigcup_{j=0}^{q-1}\bigcup_{n= N_j'}^\infty \bigcup_{k=0}^{n+1} \overline{\tilde{f}_\mathbf{w}^k\left(U_n^j\cup \red{\overline{U_n^j}}\right)}\vspace{-5pt}
\end{equation}
containing all the singular values of $\tilde{f}_\mathbf{w}$ (the points $\pm 2$, the sequences $\{w_n^j\}_{n,j}$ and the limit point $w_\infty$) satisfies that if $\tilde{f}_\mathbf{w}^n(U)\cap A=\emptyset$ for all $n\geqslant 0$, then 
\begin{equation}
\label{eq:helena}
\text{dist}_V(\tilde{f}_\mathbf{w}^{n_k-1}(U),V\setminus \tilde{f}_\mathbf{w}^{-1}(\D(\tfrac{1}{2},\tfrac{1}{8})))\to +\infty\quad \text{ as } k\to \infty,
\end{equation}
where $V=\C\setminus A$ (see \cite[Lemma~4.2]{fagella-godillon-jarque15}). Finally, suppose to the contrary that $\tilde{f}_\mathbf{w}^k(U)\cap \red{(U_n^j\cup\overline{U_n^j})}=\emptyset $ for all $n,j,k$, then $\tilde{f}_\mathbf{w}^k(U)\cap A=\emptyset$ and since $\tilde{f}_\mathbf{w}^{n_k-1}(U)\subseteq \red{\tilde{\phi}_\mathbf{w}(D_{\red{\pm n}}^j)}$ for some $n,j$, the \red{domain $\tilde{\phi}_\mathbf{w}(D_{\red{\pm n}}^j)$} must contain some of the connected components of $A$, and it follows from \eqref{eq:helena} that $\tilde{f}_\mathbf{w}^{n_k-1}(U)\cap \tilde{\phi}_\mathbf{w}(\tfrac{1}{2}D_{\red{\pm n}}^{j})\neq\emptyset$ for some $n,j$, which is a contradiction (see \cite[Lemma~4.3]{fagella-godillon-jarque15}). This finishes the proof of Theorem~\ref{thm:infinitely-many}.}
\end{proof}

\begin{rmk}
\label{rmk:odd-number-wd}
\vfour{If one wished to construct entire functions in the class $\mathcal B$ of finite order with an odd number $q$ of grand orbits of wandering domains, then one may consider $q$ different reference orbits near the orbit of $x_0=\tfrac{1}{2}$ and for each of them define a sequence of rectangles $Q_n$, and carry on the same procedure as in the proof of Theorem~\ref{thm:main}.}
\end{rmk}

%\section{Boundedness of the wandering domains as sets}

%\label{sec:bounded-wd}

%Finally, we show that all the wandering domains of functions from Theorem~\ref{thm:infinitely-many} are bounded sets. For that we use the existence of horizontal lines that are in the Julia set and obtain a contradiction with the expansivity of the function a similar way that was done in \cite[\red{Theorem~XX}]{lazebnik17}.

%\begin{proof}[Proof of Theorem~\ref{thm:bounded-wd}]
%x

%\begin{figure}[h!]
%\def\svgwidth{.8\linewidth}
%\input{separation.pdf_tex}
%%\vspace{5pt}
%\caption{Sketch of how an unbounded wandering domain $W$ may look in the $\phi_\mathbf{w}^{-1}$ coordinate if it existed, as considered in the proof of Theorem~\ref{thm:bounded-wd}.}
%\label{fig:setup}
%\end{figure}

%x
%\end{proof}

\appendix

\section{Detailed construction of the interpolation}

\label{sec:a1}
\label{sec:construction-of-interpolation}

Here we provide the details of the construction of the quasiregular interpolation that we described in Section~\ref{sec:qr-interpolation}. Our main tool will be the following result on quasiregular interpolation.

\begin{thm}[Linear Interpolation Theorem]
%\label{lem:linear-interpolation}
\label{thm:linear-interpolation}
Let \mbox{$\gamma_j:[0,t_0]\to\C$}, $j\in\{1,2\}$, be $\mathcal C^1$ curves for some $t_0>0$. Suppose that $\gamma_1(t)\neq \gamma_2(t)$ and $\gamma_j'(t)\neq 0$ for all $t\in[0,t_0]$ and $j\in\{1,2\}$. Furthermore, suppose that there exist $s_0>0$ and $r>0$ such that
\begin{equation}
\frac{s_0\gamma_j'(t)}{\gamma_2(t)-\gamma_1(t)}\in \D_{\mathbb H}\left(i,r\right), \quad \text{ for all } t\in [0,t_0] \text{ and } j\in\{1,2\}.
\end{equation}
Then, the function $\Phi:[0,s_0]\times [0,t_0]\to \C$ defined by 
\begin{equation}
\Phi(s,t):= \left(1-\tfrac{s}{s_0}\right)\gamma_1(t)+\tfrac{s}{s_0}\gamma_2(t)
\end{equation}
is locally quasiconformal with Beltrami coefficient $\mu_\phi$ satisfying that
%is a $K$-quasiconformal map such that 
\begin{equation}
|\mu_\Phi(s,t)|\leqslant \tanh\frac{r}{2}, \quad  \text{ for all } (s,t)\in [0,s_0]\times [0,t_0],
\end{equation}
and hence $K_\Phi\leqslant e^r$. If, in addition, the segments $[\gamma_1(t),\gamma_2(t)]$ and $[\gamma_1(t'),\gamma_2(t')]$ are disjoint for $0\leqslant t<t'\leqslant t_0$, then the map $\Phi$ is $e^r$-quasiconformal onto its image.

In particular, if there exist constants $0<\theta<\tfrac{\pi}{2}$ and $v_\textup{min}, v_\textup{max},l_\textup{min},l_\textup{max}>0$ such that
\begin{equation}
\begin{array}{c}
0<\dfrac{\pi}{2}-\theta\leqslant\displaystyle \textup{arg}\left(\frac{\gamma_j'(t)}{\gamma_2(t)-\gamma_1(t)}  \right)\leqslant \frac{\pi}{2}+\theta<\pi,\vspace{10pt}\\
v_\textup{min}\leqslant |\gamma_j'(t)|\leqslant v_\textup{max}\quad \textup{and}\quad l_\textup{min}\leqslant |\gamma_2(t)-\gamma_1(t)|\leqslant l_\textup{max},
\end{array}
\end{equation}
for all $t\in[0,t_0]$ and $j\in\{1,2\}$, then it suffices to choose
$$
s_0=\sqrt{\frac{l_\textup{min}l_\textup{max}}{v_\textup{min}v_\textup{max}}} \quad \text{and} \quad r=\textup{dist}_{\mathbb H}\left(s_0\frac{v_{\text{max}}}{l_\text{min}}e^{i\left(\tfrac{\pi}{2}+\theta\right)},i\right)=2\,\textup{artanh}\,\left|\frac{s_0\frac{v_{\text{max}}}{l_\text{min}}e^{i\theta}-1}{s_0\frac{v_{\text{max}}}{l_\text{min}}e^{i\theta}+1}\right|.
$$
\end{thm}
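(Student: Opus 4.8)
The plan is to treat $\Phi$ as a map of the complex variable $w=s+it$ on the rectangle $[0,s_0]\times[0,t_0]$ and to read off its Beltrami coefficient from the real partial derivatives. Differentiating the explicit formula,
\[
\partial_s\Phi=\tfrac{1}{s_0}\bigl(\gamma_2(t)-\gamma_1(t)\bigr),\qquad
\partial_t\Phi=\bigl(1-\tfrac{s}{s_0}\bigr)\gamma_1'(t)+\tfrac{s}{s_0}\gamma_2'(t),
\]
and $\partial_s\Phi\neq0$ since $\gamma_1(t)\neq\gamma_2(t)$, so the ratio $\tau=\tau(s,t):=\partial_t\Phi/\partial_s\Phi$ is well defined. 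Using $\partial_w=\tfrac12(\partial_s-i\partial_t)$ and $\partial_{\bar w}=\tfrac12(\partial_s+i\partial_t)$ one gets the algebraic identity
\[
\mu_\Phi=\frac{\partial_{\bar w}\Phi}{\partial_w\Phi}=\frac{1+i\tau}{1-i\tau}.
\]
The key point is that $\sigma\mapsto(1+i\sigma)/(1-i\sigma)$ is the standard conformal isomorphism from $\mathbb{H}$ onto $\mathbb{D}$ taking $i$ to $0$; it is therefore an isometry for the hyperbolic metrics, so that $\operatorname{dist}_{\mathbb{H}}(\tau,i)=\operatorname{dist}_{\mathbb{D}}(\mu_\Phi,0)=2\operatorname{artanh}|\mu_\Phi|$. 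Moreover a direct computation gives $\det D\Phi=|\partial_s\Phi|^2\,\Im\tau$, so as soon as we know $\tau\in\mathbb{H}$ the map $\Phi$ is an orientation-preserving $\mathcal{C}^1$ local homeomorphism.

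The second step is to notice that $\tau(s,t)=(1-\tfrac{s}{s_0})\,a_1(t)+\tfrac{s}{s_0}\,a_2(t)$ is a convex combination of the two boundary data $a_j(t):=s_0\gamma_j'(t)/(\gamma_2(t)-\gamma_1(t))$, which by hypothesis both lie in $\mathbb{D}_{\mathbb{H}}(i,r)$. Since hyperbolic discs in $\mathbb{H}$ are Euclidean discs, $\mathbb{D}_{\mathbb{H}}(i,r)$ is convex, and hence $\tau(s,t)\in\mathbb{D}_{\mathbb{H}}(i,r)$ for every $(s,t)$. By the isometry above this gives $2\operatorname{artanh}|\mu_\Phi|<r$, i.e.\ $|\mu_\Phi|<\tanh(r/2)$, and therefore $K_\Phi=\frac{1+|\mu_\Phi|}{1-|\mu_\Phi|}<e^r$ because $\frac{1+\tanh x}{1-\tanh x}=e^{2x}$. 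So $\Phi$ is locally quasiconformal with the asserted bound. For the additional claim, if the segments $[\gamma_1(t),\gamma_2(t)]$ are pairwise disjoint for distinct parameters then $\Phi$ is injective: an equality $\Phi(s,t)=\Phi(s',t')$ forces $t=t'$ (otherwise the two values lie on disjoint segments), and then $s=s'$ since the affine parametrization of a nondegenerate segment is injective. Being an injective $\mathcal{C}^1$ local homeomorphism with $\|\mu_\Phi\|_\infty<1$, $\Phi$ is then $e^r$-quasiconformal onto its image.

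For the concluding ``in particular'' clause, the stated bounds place each $a_j(t)$ in the closed annular sector
\[
A:=\Bigl\{\,\zeta\in\C:\ \bigl|\arg\zeta-\tfrac{\pi}{2}\bigr|\le\theta,\ \ s_0\tfrac{v_{\min}}{l_{\max}}\le|\zeta|\le s_0\tfrac{v_{\max}}{l_{\min}}\,\Bigr\},
\]
and the choice $s_0=\sqrt{l_{\min}l_{\max}/(v_{\min}v_{\max})}$ makes the two radial bounds reciprocal, say $\rho_0^{-1}$ and $\rho_0$ with $\rho_0=s_0v_{\max}/l_{\min}$. It remains to check $A\subseteq\overline{\mathbb{D}_{\mathbb{H}}(i,d)}$ with $d:=\operatorname{dist}_{\mathbb{H}}(\rho_0 e^{i(\pi/2+\theta)},i)$. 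As this hyperbolic disc is a convex Euclidean disc, it is enough to control the extreme points of $\operatorname{conv}(A)$, namely the outer arc $\{\rho_0 e^{i\psi}:|\psi-\tfrac{\pi}{2}|\le\theta\}$ and the two inner corners $\rho_0^{-1}e^{i(\pi/2\pm\theta)}$. The isometries $\zeta\mapsto-\bar\zeta$ and $\zeta\mapsto-1/\zeta$ of $\mathbb{H}$ both fix $i$ and permute the four corners of $A$, so all four are at hyperbolic distance exactly $d$ from $i$; and along the outer arc,
\[
\operatorname{dist}_{\mathbb{H}}\bigl(\rho_0 e^{i\psi},i\bigr)=\operatorname{arccosh}\!\Bigl(\tfrac{\rho_0^2+1}{2\rho_0\sin\psi}\Bigr)
\]
is increasing in $|\psi-\tfrac{\pi}{2}|$ on $[0,\theta]$, because $\theta<\tfrac{\pi}{2}$ keeps $\psi$ in $(0,\pi)$ where $\sin\psi$ decreases as $\psi$ moves away from $\tfrac{\pi}{2}$, so its maximum on the arc is again $d$. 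Hence $A\subseteq\overline{\mathbb{D}_{\mathbb{H}}(i,d)}$ and the first part applies with $r=d$ (the closed inclusion still yields $|\mu_\Phi|\le\tanh(d/2)$, which is what is claimed). Finally the displayed formula for $r$ follows by writing $d=2\operatorname{artanh}\bigl|C(\rho_0 e^{i(\pi/2+\theta)})\bigr|$ for the Cayley transform $C(\zeta)=(\zeta-i)/(\zeta+i)$ and simplifying $C(\rho_0 e^{i(\pi/2+\theta)})=(\rho_0 e^{i\theta}-1)/(\rho_0 e^{i\theta}+1)$ after factoring out $i$.

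The conceptual core is the identity $\mu_\Phi=(1+i\tau)/(1-i\tau)$ together with the two elementary facts that $\tau$ is a convex combination of the boundary data and that hyperbolic discs in $\mathbb{H}$ are convex; after that everything is bookkeeping. The only mildly delicate point is the containment of the annular sector $A$ in a hyperbolic disc centered at $i$ in the last clause, but this reduces to the monotonicity of $1/\sin\psi$ and the two reflection symmetries fixing $i$, so I do not anticipate a genuine obstacle.
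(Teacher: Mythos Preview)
Your proof is correct and follows essentially the same approach as the paper: compute $\mu_\Phi$ via the Cayley-type identity $\mu_\Phi=-M(\tau)$ with $M(z)=(z-i)/(z+i)$ and $\tau=\partial_t\Phi/\partial_s\Phi$, then use that $\tau$ is a convex combination of the boundary data $s_0\gamma_j'/(\gamma_2-\gamma_1)$ and that hyperbolic discs in $\mathbb{H}$ are Euclidean (hence convex). Your write-up is in fact more complete than the paper's, which stops after the Beltrami bound; you supply the orientation check via $\det D\Phi=|\partial_s\Phi|^2\,\Im\tau$, the injectivity argument, and a full verification of the ``in particular'' clause using the symmetries $\zeta\mapsto-\bar\zeta$ and $\zeta\mapsto-1/\zeta$ of $\mathbb{H}$ fixing $i$, none of which the paper spells out.
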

\begin{proof}
Identifying the point $(s,t)$ with $z=s+it$, we have
$$
\mu_\Phi(s,t)=\frac{\partial_{\overline{z}}\Phi(s,t)}{\partial_{z}\Phi(s,t)}=-\frac{\frac{\partial_t\Phi(s,t)}{\partial_s\Phi(s,t)}-i}{\frac{\partial_t\Phi(s,t)}{\partial_s\Phi(s,t)}+i}=-M\left(\frac{\partial_t\Phi(s,t)}{\partial_s\Phi(s,t)}\right),
$$
where $M(z)=\frac{z-i}{z+i}$ (see Figure~\ref{fig:linear-interpolation}). The partial derivatives of $\Phi$ are
$$
\begin{array}{c}
\partial_s\Phi(s,t)=-\frac{1}{s_0}\gamma_1(t)+\frac{1}{s_0}\gamma_2(t)\quad \text{and}\quad
\partial_t\Phi(s,t)=\left(1-\frac{s}{s_0}\right)\gamma_1'(t)+\frac{s}{s_0}\gamma_2'(t),
\end{array}
$$
and therefore,
$$
\frac{\partial_t\Phi(s,t)}{\partial_s\Phi(s,t)}=s_0\left(\left(1-\frac{s}{s_0}\right)\frac{\gamma_1'(t)}{\gamma_2(t)-\gamma_1(t)}+\frac{s}{s_0}\frac{\gamma_2'(t)}{\gamma_2(t)-\gamma_1(t)}\right)\in \D_{\mathbb H}(i,r).
$$
Finally, the image of $\D_{\mathbb H}(i,r)$ by $M$ is the Euclidean disc $\D_{\tanh(r/2)}$ and hence $|\mu_\Phi(s,t)|\leqslant \tanh\tfrac{r}{2}$ for all $(s,t)\in [0,s_0]\times [0,t_0]$ as required.
\end{proof}

\begin{figure}
\vspace{30pt}
\def\svgwidth{.8\linewidth}
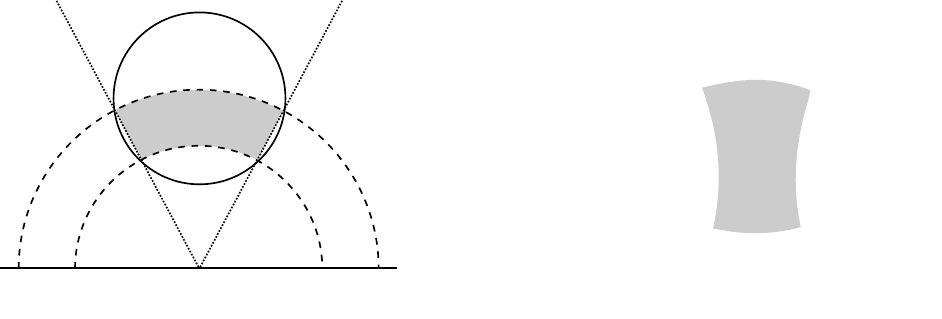
\caption{The M\"obius transformation $M(z)=\frac{z-i}{z+i}$ from Theorem~\ref{thm:linear-interpolation}.}
\label{fig:linear-interpolation}
\end{figure}

We now use linear interpolation to construct a quasiregular map $G$ that interpolates between $g(z)=2\cosh z$ and a $d$-th power map in a doubly connected region. The dilatation $K_G$ should be bounded by a constant independent of $d\in \N$.

\begin{proof}[Proof of Lemma~\ref{lem:interpolation}]
\label{proof:lem:interpolation}
Consider the set
$$
\Omega:=(E\setminus \D_R)\cap \{z\in\C\, :\, \textup{Re}\,z\geqslant 0,\ \textup{Im}\,z\geqslant 0\}.
$$
Since $\cosh(-z)=\cosh(z)$ and $\cosh(\overline{z})=\overline{\cosh(z)}$ for all $z\in \C$ and the map $G$ is an even power on $\partial \D_R$, we have that  $G(-z)=G(z)$ and $G(\overline{z})=\overline{G(z)}$ for all $z\in \partial E\cup  ((E\cap i\mathbb{R})\setminus \D_R)\cup \partial \D_R$. Thus, we only need to do the interpolation in $\Omega$ and then extend it by these relations to $(E\setminus \D_R)\setminus \Omega$.

Let $Q:=\{z\in\C\, :\, 0\leqslant \textup{Re}\,z\leqslant 2d\pi,\ 0\leqslant \textup{Im}\,z\leqslant 2d\pi\}$. First we construct a quasiconformal map $\Phi_1:Q\to \Omega$. For example, we can partition both $Q$ and $\Omega$ into four sets $Q_i$ and $\Omega_i$, $i\in\{1,2,3,4\}$, as shown in Figure~\ref{fig:interpolation-step1} and apply the Linear Interpolation Theorem~\ref{thm:linear-interpolation} to each pair $Q_i,\Omega_i$, so that $\Phi_1(Q_i)=\Omega_i$ for all\linebreak 

\begin{figure}[h]
\vspace{10pt}
\def\svgwidth{.8\linewidth}
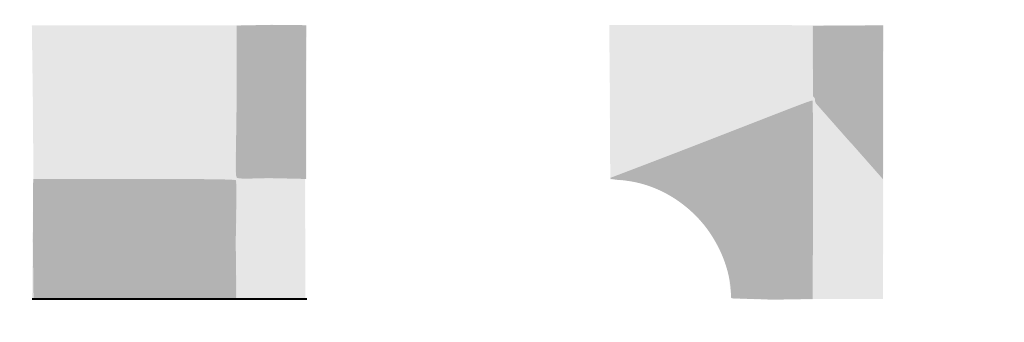
\caption{The quasiconformal map $\Phi_1:Q\to \Omega$ from Lemma~\ref{lem:interpolation}.}
\label{fig:interpolation-step1}
\end{figure}

\noindent
$i\in\{1,2,3,4\}$, $\Phi_1\equiv \text{id}$ on $\partial Q_i\cap \partial Q$ for all $i\in\{1,2,3\}$, $\Phi_1(z)=R\exp\left(i\frac{\pi}{2R}\text{Im}\,z\right)$ on $[0,iR]$ and $\Phi_1(p_1)=p_2$, where $p_1:=\tfrac{3}{2}d\pi+iR\in Q$ and $p_2:=\tfrac{3}{2}d\pi+i\tfrac{3}{2}R\in \Omega$. It is easy to check that the dilatation of $\Phi_1$ is uniformly bounded independently of $d$.

The rest of the proof will be devoted to construct a quasiregular function\linebreak \mbox{$\check{G} : Q\to \mathcal{E}_{2d\pi}$} so that defining $G=\check{G}\circ \Phi_1^{-1}$ on $\Omega$ (and extending the definition of~$G$ to the rest of $E\setminus \D_R$ as described before), then $G$ has the required proper\-ties (see Figure~\ref{fig:interpolation-step4}). Note that $\check{G}$ has to map the vertical segment $[0,iR]\subseteq \partial Q$ to the arc of circle $\{z=e^{i\theta}\, :\, \theta\in [0,d\pi]\}\subseteq \partial \mathbb{D}$ and $\check{G}(iR)=(-1)^{d}$. 

\begin{figure}[h]
%\vspace{10pt}
\def\svgwidth{.9\linewidth}
\input{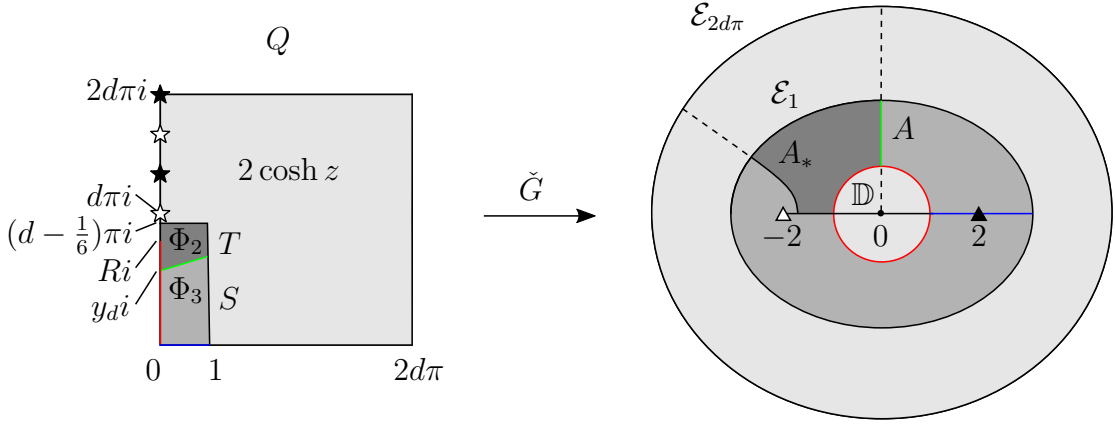}
\caption{The quasiregular map $\check{G}:Q\to \mathcal{E}_{2d\pi}$ from Lemma~\ref{lem:interpolation} with $d$ odd (here we used $d=3$).}
\label{fig:interpolation-step4}
\end{figure}

Define
$$
y_d:=\tfrac{2d-1}{2d}R=\tfrac{R}{d\pi}(d-\tfrac{1}{2})\pi=\left(d-\tfrac{5}{6}+\tfrac{1}{6d}\right)\pi,
$$
then, we have
$$
(d-\tfrac{5}{6})\pi<y_d<(d-\tfrac{1}{2})\pi<R.
$$
Let $T$ be the trapezoid of vertices $(d-\tfrac{1}{6})\pi i, 1+(d-\tfrac{1}{6})\pi i, 1+(d-\tfrac{1}{2})\pi i$ and $y_d i$ (see Figure~\ref{fig:interpolation-step2}). Observe that if we consider the translate $\tilde{T}:=T-(d-1)\pi i$ of $T$, then it becomes clear that the shape of~$T$ is bounded in the sense that the only vertex of $\tilde{T}$ that depends on $d$ is $\tilde{y}_di:=y_di-(d-1)\pi i$ and satisfies 
$$
\tfrac{\pi}{6}<\tilde{y}_d<\tfrac{\pi}{2}.
$$
The point $Ri\in \partial T$ corresponds to $\tfrac{2}{3}\pi i\in\partial \tilde{T}$. Let $A=\mathcal{E}_1\setminus \mathbb D$ and define the region $H:=g(\{z\in\C\, :\, \text{Re}\,z\geqslant 0,\ \tfrac{5}{6}\pi< \text{Im}\,z< \tfrac{7}{6}\pi\})$ that is bounded by a hyperbola. Then, define the set $A_*:=(A\cap  \overline{\mathbb H}\cap \overline{\mathbb H_l})\setminus H$ if $d$ is odd, and let~$A_*$ be the symmetric set with respect to the origin if $d$ is even. We construct a quasiconformal map $\Phi_2:T\to A_*$ such that 
$$
\!\Phi_2(z)\!=\!\left\{\!
\begin{array}{ll}
\!\!2\cosh z,&\! \text{if } z\in \left[R i,\left(d-\tfrac{1}{6}\right)\pi i\right]\cup\vspace{5pt}\\
\!\!&  \! \left[\left(d-\tfrac{1}{6}\right)\pi i,1+\left(d-\tfrac{1}{6}\right)\pi i\right]\cup\vspace{5pt}\\
\!\!& \! \left[1+\tfrac{1}{2}\pi i,1+\tfrac{5}{6}\pi i\right],\vspace{5pt}\\
\!\!\exp\left(i\tfrac{d\pi}{R}\text{Im}\,z\right), &\! \text{if } z\in \left[y_d i,R i\right],\vspace{5pt}\\
\!\!(1-\text{Re}\,z)(-1)^{d-1}i+(\text{Re}\,z)(-1)^{d-1}(e-e^{-1})i, & \!\text{if } z\in \left[y_d i,1+\left(d-\tfrac{1}{2}\right)\pi i\right]\!.
\end{array}
\right.
$$
%To that end, let $T_1\subseteq T$ be a rectangle of height $\pi/3$ and width $\pi/6$ so that after joining each of the vertices of~$T_1$ to the closest vertex of $T$ and also joining $Ri$ to the middle point of the left side of $T_1$ we obtain a partition of $T\setminus T_1$ into quadrilaterals $T_j$, $2\leqslant j\leqslant 6$, with angles at their vertices that are bounded away from $0$ and $\pi$ as shown in Figure~\ref{fig:interpolation-step2}. 
We can partition $T$ and $A$ into quadrilaterals as in Figure~\ref{fig:interpolation-step2} and apply again the Linear Interpolation Theorem~\ref{thm:linear-interpolation} to obtain quasiconformal maps $\Phi_{2,1,i}:R_i\to T_i$ and $\Phi_{2,2,i}:R_i\to A_i$ for $1\leqslant i\leqslant 6$. Then, it suffices to put $\Phi_2(z):=\Phi_{2,2,i}\circ \Phi_{2,1,i}^{-1}(z)$ if $z\in T_i$. This map is well defined since the maps match on the shared boundaries of $T_i$ because they come from linear interpolation.

\begin{figure}
\vspace{30pt}
\def\svgwidth{.8\linewidth}
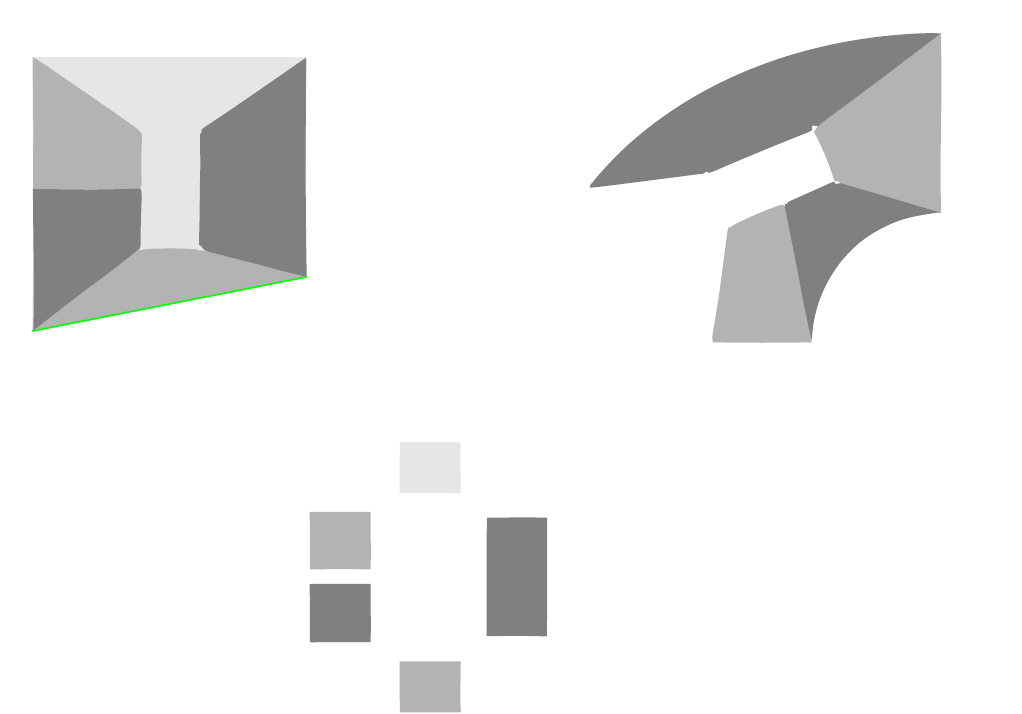
\caption{The quasiconformal map $\Phi_2:T\to A_*$ from Lemma~\ref{lem:interpolation} with $d$ odd.}
\label{fig:interpolation-step2}
\end{figure}

There exists a quasiconformal map $\Phi_{3,1}$ from the rectangle $[0,1]+i[0,(d-\tfrac{1}{2})\pi i]$ onto the trapezoid $S$ with vertices $0$, $1$, $1+(d-\tfrac{1}{2})\pi i$ and $y_d i$ such that $\Phi_{3,1}$ is the identity on the right and bottom edges and is given by linear interpolation on the other two edges (see Figure~\ref{fig:interpolation-step3}). Define a quasiregular map $\Phi_{3,2}$ from the same rectangle $[0,1]+i [0,(d-\tfrac{1}{2})\pi i]$ to the annulus $A$ by
$$
\Phi_{3,2}(s+it):=(1-s)e^{it}+s2\cosh(1+it)=(1-s)e^{it}+s(e^{1+it}+e^{-1-it}).
$$
Observe that $|\text{arg}(2\cosh(1+it))-t|<\arcsin(1/e^2)<\arccos(1/e)$ and therefore for every $t\in [0,(d-\tfrac{1}{2})\pi]$, the line given by $\Phi_{3,2}([0,1]+it)$ is disjoint from $\D$. Thus, the Linear Interpolation Theorem~\ref{thm:linear-interpolation} implies that $\Phi_{3,2}$ is a quasiconformal map with distortion bounded independently of $d$. Then, we put $\Phi_3:=\Phi_{3,2}\circ \Phi_{3,1}^{-1}$.

\begin{figure}[h]
\vspace{30pt}
\def\svgwidth{.75\linewidth}
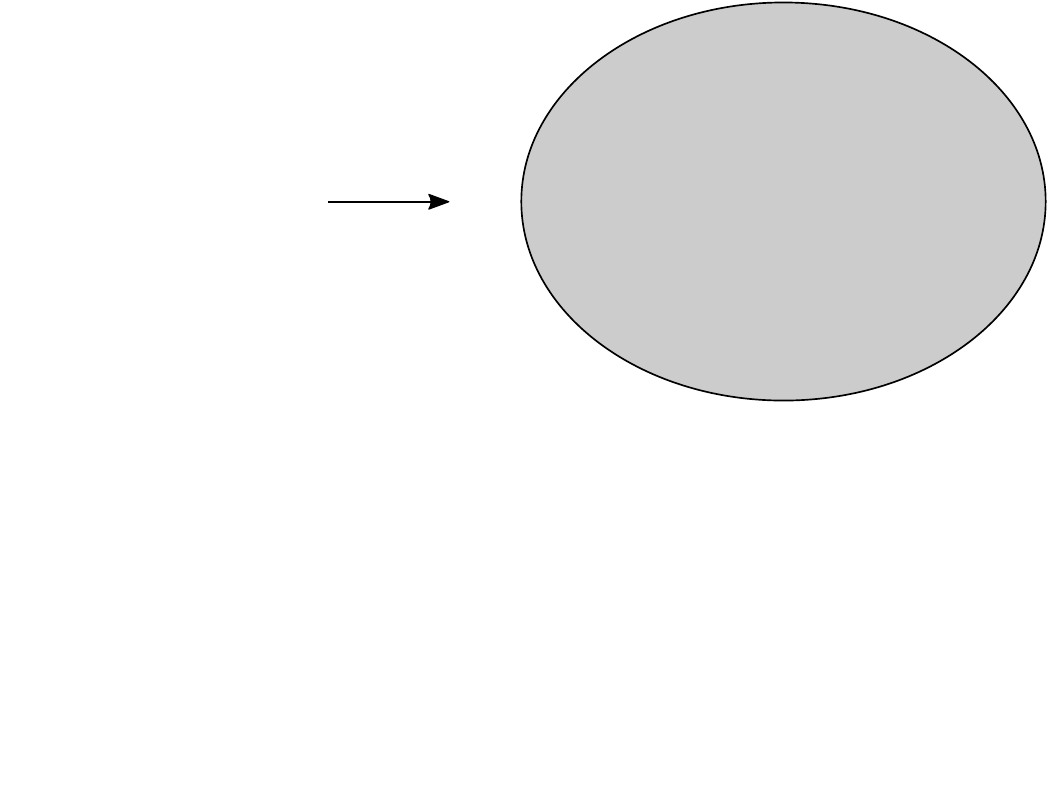
\caption{The quasiregular map $\Phi_3:S\to A$ from Lemma~\ref{lem:interpolation} with $d$ odd.}
\label{fig:interpolation-step3}
\end{figure}

Now we just need to put all the previous maps together to define the quasiregular map $\check{G}:Q\to\mathcal{E}_{2d\pi}$: for $z\in Q$, set
$$
\check{G}(z):=\left\{
\begin{array}{ll}
\Phi_2(z),& \text{ if } z\in T,\vspace{5pt}\\
\Phi_3(z),& \text{ if } z\in S,\vspace{5pt}\\
2\cosh z,& \text{ if } z\in Q\setminus (T\cup S),
\end{array}\right.
$$
and then put $G:=\check{G}\circ \Phi_1^{-1}$ on $\Omega=(E\setminus \D_R)\cap \overline{\mathbb H}\cap \overline{\mathbb H_r}$ (see~Figure~\ref{fig:interpolation-step4}). From the construction, it is clear that the different maps that define $\check{G}$ coincide along the common boundaries of the regions where they are defined. 

Let us check the values of $\tilde{G}$ on the left side of $Q$. On the one hand, we have $\tilde{G}(yi)=2\cosh(yi)$ for $R\leqslant y\leqslant 2d\pi$. On the other hand, for $y_d\leqslant y\leqslant R$, $\check{G}(yi)=\exp\left(i\tfrac{d\pi}{R}y\right)$ and, for $0\leqslant y\leqslant y_d$, 
$$
\check{G}(yi)=\Phi_3(yi)=\Phi_{3,2}\bigl(\Phi_{3,1}^{-1}(yi)\bigr)=\Phi_{3,2}\left(y\frac{\left(d-\tfrac{1}{2}\right)\pi}{y_d}i\right)=\exp\left(i\frac{d\pi y}{R}\right).
$$
Thus, $\check{G}(yi)=\exp(i\tfrac{d\pi y}{R})$ for all $0\leqslant y\leqslant R$. Finally, recall that the two maps match at the point $z=Ri$ as
$$
2\cosh(Ri)=2\cos\left(\left(d-\tfrac{1}{3}\right)\pi\right)=(-1)^d=\exp\left(d\pi i\right)=\exp\left(i\tfrac{d\pi}{R}R\right).
$$
If $z=Re^{i\theta}\in\partial \D_R\cap Q$ with $0\leqslant \theta\leqslant \tfrac{\pi}{2}$ and $yi=\Phi^{-1}(z)$, then $\theta=\tfrac{\pi}{2R}y$. Therefore $0\leqslant y\leqslant R$ and $\check{G}(yi)=\exp\left(i\tfrac{d\pi}{R}y\right)$, so $G(z)=\exp\left(2d\theta i\right)=(z/R)^{2d}$ as required.

Since the dilatation of every function $\Phi_j$ involved is bounded by a uniform constant which is independent of $d$, the dilatation $K_G$ is bounded independently of $d$. This completes the proof of Lemma~\ref{lem:interpolation}.
%We split $\Omega$ into three closed sets $\Omega_+,\Omega_0,\Omega_-$ with pairwise disjoint interior such that $\Omega=\Omega_+\cup\Omega_0\cup\Omega_-$ and we do the interpolation separately in each part (see Figure \ref{fig:interpolation}). Define the sets
%$$
%\begin{array}{c}
%\Omega_+:=\{z\in \Omega\ :\ m\textup{Re}\,z+b\leqslant \textup{Im}\,z\},\vspace{5pt}\\
%\Omega_0:=\{z\in \Omega\ :\ -m\textup{Re}\,z-b\leqslant\textup{Im}\,z\leqslant m\textup{Re}\,z+b\},\vspace{5pt}\\
%\Omega_-:=\{z\in \Omega\ :\ \textup{Im}\,z\leqslant -m\textup{Re}\,z-b\}
%\end{array}
%$$
\end{proof}

In Lemma~\ref{lem:shift-interpolation}, we introduced the map $\rho_w:\overline{\D}\to\overline{\D}$ for $w\in \D_{3/4}$. This map is the identity on $\partial \D$ and $\rho_w(z)=z+w$ on $\overline{\D}_{1/8}$. We have to show that $\rho_w$ is $K_2$-quasiconformal for some constant $K_2\geqslant 1$ that does not depend on the choice of $w\in \D_{3/4}$.

\begin{proof}[Proof of Lemma~\ref{lem:shift-interpolation}]
\label{proof:lem:shift-interpolation}
First, we use the principal branch of the logarithm to map the annulus $\overline{\D}\setminus \D_{1/8}$ to the rectangle 
$$
\{z\in \C\, :\, -\log 8\leqslant \textup{Re}\,z\leqslant 0,\ -\pi\leqslant\textup{Im}\,z\leqslant \pi\}.
$$
Then, we subdivide this rectangle into a certain number of smaller rectangles and apply Theorem~\ref{thm:linear-interpolation} to each of them (see Figure~\ref{fig:interpolation-rho}). Observe that we can arrange them so that the angles of the image partition has angles bounded away from~$0$ and~$\pi$ at the corners, so we deduce that the dilatation $K_{\rho_w}$ is bounded by a constant $K_2\geqslant 1$ for all $w\in \D_{3/4}$.
\end{proof}

\begin{figure}[h]
\def\svgwidth{.8\linewidth}
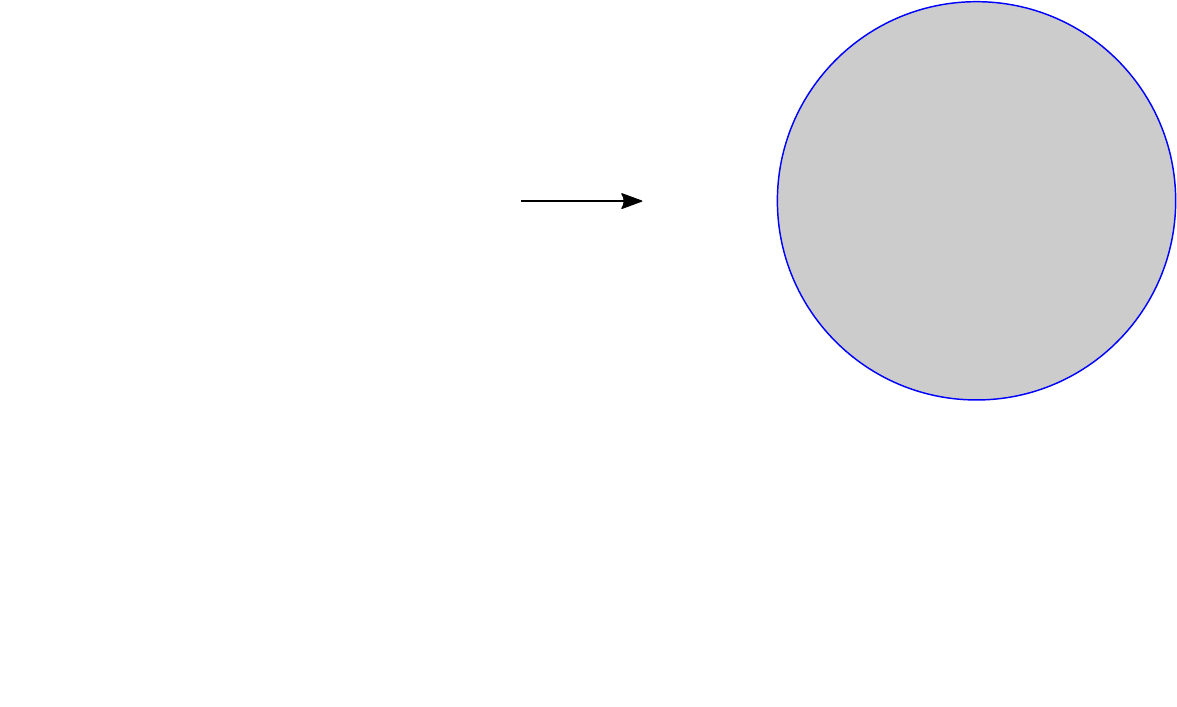
\vspace{15pt}
\caption{The quasiconformal map $\rho_w:\overline{\D}\to \overline{\D}$ from Lemma~\ref{lem:shift-interpolation}.}
\label{fig:interpolation-rho}
\end{figure}

\section{Proofs of the estimates on quasiconformal maps}
\label{sec:proof-of-estimates}

In this appendix we prove the three results that we stated in Section \ref{sec:qc-estimate} and also Lemma~\ref{lem:holom-dependence-of-centers}. Recall that throughout the section we had the standing assumption that $K>1$, the discs $B_m := \D(\zeta_m, r_m)$, $m\in\N$, satisfy that $|\zeta_m| \ge 4$ and $r_m/|\zeta_m| \leqslant \min\{\frac{1}{4},\delta_1\}$ for all $m\in\N$, $\sum_{m=1}^\infty r_m/|\zeta_m| < +\infty$ and there is a $K$-quasiconformal map $\phi:\C \to \C$ such that $\phi(0)=0$, $\phi(1)=1$ and $\supp \mu_\phi \subseteq  \bigcup_{m =1}^\infty B_m$  (see Assumption~\ref{ass:qu-estimates}). 

The following lemma will be used later in the proofs of Lemma~\ref{lem:qc-estimate1} and Lemma~\ref{lem:qc-estimate2}.

\begin{lem} 
\label{lem:simple-pole} 
For any $\alpha, \beta \in \C$ and $r>0$, 
\begin{equation}
\iint_{\D(\alpha, r)} \frac{dx \tinyhsp dy}{|z-\beta|} \le 2\pi r.
\end{equation}
\end{lem}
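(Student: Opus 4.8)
The plan is to move the singularity to the origin of a polar coordinate system centred at $\beta$, so that the weight $1/|z-\beta|$ is cancelled exactly by the Jacobian. Writing $z = \beta + \rho e^{i\theta}$ with $\rho \ge 0$ and $\theta \in [0,2\pi)$, one has $dx\,dy = \rho\,d\rho\,d\theta$ and $|z-\beta| = \rho$, so
\begin{equation*}
\iint_{\D(\alpha, r)} \frac{dx\, dy}{|z-\beta|} = \int_0^{2\pi} \left( \int_{I_\theta} d\rho \right) d\theta, \qquad I_\theta := \{\rho \ge 0 : |\beta + \rho e^{i\theta} - \alpha| < r\}.
\end{equation*}
For each fixed $\theta$ the condition defining $I_\theta$ is a quadratic inequality in the real variable $\rho$, so $I_\theta$ is an interval; geometrically, $I_\theta$ is the portion lying on the ray $\{\beta + \rho e^{i\theta}: \rho \ge 0\}$ of the open chord $c_\theta$ cut out of $\D(\alpha, r)$ by the full line $\{\beta + t e^{i\theta} : t \in \R\}$.

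The key observation is that the missing part of $c_\theta$, lying on the opposite ray, is recovered from the antipodal direction: the set $\{-\rho : \rho \in I_{\theta+\pi}\}$ is precisely the part of $c_\theta$ with $t \le 0$. These two pieces are disjoint up to a single point, and their union is contained in $c_\theta$, whose Euclidean length is at most the diameter $2r$ of $\D(\alpha,r)$. Hence $\operatorname{length}(I_\theta) + \operatorname{length}(I_{\theta+\pi}) \le 2r$ for every $\theta$, and
\begin{equation*}
\int_0^{2\pi} \left( \int_{I_\theta} d\rho \right) d\theta = \int_0^{\pi} \left( \int_{I_\theta} d\rho + \int_{I_{\theta+\pi}} d\rho \right) d\theta \le \int_0^\pi 2r\, d\theta = 2\pi r,
\end{equation*}
which is the assertion.

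There is essentially no obstacle here; the only step needing a word of care is the pairing of antipodal directions, which is what produces the sharp constant $2\pi r$ rather than the $4\pi r$ that the crude bound $\operatorname{length}(I_\theta) \le 2r$ alone would yield. As an alternative I could use the layer-cake identity $\iint_{\D(\alpha,r)} |z-\beta|^{-1}\,dA = \int_0^\infty |\D(\alpha,r) \cap \D(\beta, 1/t)|\, dt$ together with the elementary fact that $|\D(\alpha,r) \cap \D(\beta,s)| \le \min\{\pi r^2, \pi s^2\}$ (with equality when $\beta = \alpha$); this reduces the inequality to the concentric case, where $\iint_{\D(\alpha,r)} |z-\alpha|^{-1}\,dA = \int_0^{2\pi}\!\int_0^r d\rho\,d\theta = 2\pi r$ by direct computation. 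Either write-up is a couple of lines.
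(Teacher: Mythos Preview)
Your proof is correct. The paper's argument is a rearrangement: setting $\beta=0$, it compares $\D(\alpha,r)$ with the concentric disc $\D_r=\D(0,r)$ by noting that on $\D(\alpha,r)\setminus\D_r$ one has $1/|z|\le 1/r$, on $\D_r\setminus\D(\alpha,r)$ one has $1/|z|\ge 1/r$, and the two set differences have equal area; hence the integral over $\D(\alpha,r)$ is dominated by the integral over $\D_r$, which equals $2\pi r$ by direct polar integration. Your primary route---polar coordinates centred at $\beta$ together with the antipodal pairing $\operatorname{length}(I_\theta)+\operatorname{length}(I_{\theta+\pi})\le 2r$---is a genuinely different and very clean argument that never needs to introduce the concentric comparison disc at all; the constant $2\pi r$ drops out of the chord bound directly. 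Your layer-cake alternative, on the other hand, is essentially the paper's rearrangement phrased more abstractly: both reduce the problem to the concentric case $\alpha=\beta$ via the monotonicity $|\D(\alpha,r)\cap\D(\beta,s)|\le|\D(\beta,r)\cap\D(\beta,s)|$.
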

\begin{proof} We may assume that $\beta=0$.  
Since 
$$
\iint_{\D(\alpha, r) \sminus \D_r} \frac{dx \tinyhsp dy}{|z|}\leqslant\frac{1}{r} |\D(\alpha, r) \sminus \D_r|=\frac{1}{r} |\D_r \sminus \D(\alpha, r)| \leqslant
\iint_{\D_r \sminus \D(\alpha, r)} \frac{dx \tinyhsp dy}{|z|} ,
$$ 
we have 
$$
 \iint_{\D(\alpha, r)}\frac{dx \tinyhsp dy}{|z|}\leqslant  \iint_{\D_r}\frac{dx \tinyhsp dy}{|z|} =2 \pi r, 
$$
as required.  
\end{proof}

%\begin{lem} 
%\label{lem:two-pole} 
%For any $\alpha\in \D^*$, 
%$$
%\iint_{\D} \frac{dx \tinyhsp dy}{|z(z-\alpha)|} < 4\pi\left(1+\frac{2}{|\alpha|^2}\right).
%$$
%\end{lem}
%\begin{proof} Let $r=|\alpha|/2$. We split the disc $\D$ into three domains:
%$$
%\iint_{\D} \frac{dx \tinyhsp dy}{|z(z-\alpha)|} = \iint_{\D_r} \frac{dx \tinyhsp dy}{|z(z-\alpha)|} +\iint_{\D(\alpha,r)} \frac{dx \tinyhsp dy}{|z(z-\alpha)|} +\iint_{\D\setminus(\D_r\cup\D(\alpha,r))} \frac{dx \tinyhsp dy}{|z(z-\alpha)|}.
%$$
%Using that $|z|>r$ on $\D\setminus \D_r$ and $|z-\alpha|>r$ in $\D\setminus \D(\alpha,r)$, we obtain that 
%$$
%\iint_{\D} \frac{dx \tinyhsp dy}{|z(z-\alpha)|} <2\pi+2\pi+\frac{2\pi}{r^2}
%$$
%as we wanted to show.
%\end{proof}

Now we are ready to prove Lemma~\ref{lem:qc-estimate1}, which states that for every $\eps>0$, there exists $M_1=M_1(\eps)\in\N$ such that if $\supp \mu_\phi \subseteq \bigcup_{m = M_1}^{\infty} B_m$, then 
\[
\left| \log \frac{\redf{\phi}(\zeta)}{\zeta} \right|_{\calC} < \eps,\quad \text{ for all } \zeta \in \C \sminus \{0\}.
\]

\begin{proof} [Proof of Lemma \ref{lem:qc-estimate1}] 
Let $\eps>0$ and suppose that $\supp \mu_\phi \subseteq \bigcup_{m=M_1}^{\infty} B_m$. Setting $\alpha =0$, if $0<|\gamma|\leqslant \delta_1 |\beta|$, then Corollary \ref{cor:KeyInequality} gives the inequality
\begin{equation} \label{eq:keyineq-for-estimate1}
\left| \log \frac{\phi(\beta)}{\beta} - \log \frac{\phi(\gamma)}{\gamma} \right|_{\calC} 
\le  C(K -1) \sum_{m=M_1}^{\infty} \iint_{B_m} \frac{|\beta| \tinyhsp dx\tinyhsp dy}{| z(z-\beta)(z-\gamma)|}.   
\end{equation}
Assume $|\gamma| \le \frac{1}{\delta_1}$ and $|\zeta_m| \ge \frac{4}{\delta_1}$, we will give an estimate on the integral on the right hand side.   
Note that if $z \in B_m$, then $|z| \ge |\zeta_m| - r_m = |\zeta_m|(1- r_m/|\zeta_m|) \ge \frac{3}{4}|\zeta_m|$, 
and $|z - \gamma| \ge |\zeta_m|(1- r_m/|\zeta_m| - |\gamma|/|\zeta_m|) \ge \frac{1}{2}|\zeta_m|$.  
Let $H \ge 6$ and $\eta \le \frac{1}{4}$ and assume $r_m/|\zeta_m| \le \eta$.  
Now we split the estimate into two cases.  \vspace{5pt}
\begin{itemize}
\item {Case 1:} $|\beta - \zeta_m| \le \frac{1+H\eta}{H-1} |\zeta_m|$ \vspace{5pt}\\
Note that this includes the case $\beta \in B_m$, since $r_m \le \eta |\zeta_m| \le \frac{1+H\eta}{H-1} |\zeta_m|$.  We~have
$$
\hspace{42pt}\iint_{B_m} \frac{|\beta| \tinyhsp dx\tinyhsp dy}{| z(z-\beta)(z-\gamma)|} 
\le \frac{(1+\frac{1+H\eta}{H-1})|\zeta_m|}{\frac{3}{4}|\zeta_m| \cdot \frac{1}{2}|\zeta_m|} \iint_{B_m} \frac{\tinyhsp dx\tinyhsp dy}{|z-\beta|} 
\le \frac{8H(1+\eta)}{3(H-1)} \frac{2\pi r_m}{|\zeta_m|},
$$
where the last inequality follows from Lemma~\ref{lem:simple-pole}.\vspace{5pt} 
\item {Case 2:} $|\beta - \zeta_m| > \frac{1+H\eta}{H-1} |\zeta_m|$\vspace{5pt}\\
In this case, $H|\beta - \zeta_m| - |\beta - \zeta_m| \ge |\zeta_m| + H r_m$ and
$$
\hspace{42pt}|\beta| \le |\beta-\zeta_m|+|\zeta_m| \le H(|\beta-\zeta_m|-r_m) \le H(|\beta-\zeta_m|-|z - \zeta_m|) 
\le H|z - \beta|,
$$
for $z \in B_m$. Hence, $\frac{|\beta|}{|z - \beta|} \le H$ and
$$
\begin{array}{rl}
\ds\iint_{B_m} \frac{|\beta| \tinyhsp dx\tinyhsp dy}{| z(z-\beta)(z-\gamma)|} 
\hspace{-6pt} & \ds\leqslant H  \iint_{B_m} \frac{\tinyhsp dx\tinyhsp dy}{|z(z-\gamma)|} 
\leqslant \frac{2\pi H r_m^2}{\frac{3}{4}|\zeta_m| \cdot \frac{1}{2}|\zeta_m|} \vspace{10pt}\\
& \ds\leqslant \frac{16\pi H}{3} \frac{r_m^2}{|\zeta_m|^2} 
\leqslant \frac{16\pi H \eta }{3} \frac{r_m}{|\zeta_m|}.   
\end{array}
$$
\end{itemize}
By the assumption that $\sum_{m=1}^\infty r_m/|\zeta_m| < +\infty$, 
there exists $\redf{M_1=M_1(\eps)}\in \N$ such that the right hand side of \eqref{eq:keyineq-for-estimate1} is less than given $\eps/2$.  
By taking $\gamma=1$, if $|\beta| \ge \frac{1}{\delta_1}$, 
$$  
\vspace*{-2pt}  
\left| \log \frac{\phi(\beta)}{\beta} \right|_{\calC} <\frac{\eps}{2}.  
$$  
Now fix a $\beta$ with $|\beta| > \frac{1}{\delta_1^2}$, then for any $\gamma$ with $|\gamma| \le \frac{1}{\delta_1}$, we also have 
$$
\vspace*{-2pt}  
\left| \log \frac{\phi(\gamma)}{\gamma} \right|_{\calC} 
\le \left| \log \frac{\phi(\beta)}{\beta} - \log \frac{\phi(\gamma)}{\gamma}\right|_{\calC} + \left| \log \frac{\phi(\beta)}{\beta} \right|_{\calC} < \eps.
$$
This completes the proof of Lemma~\ref{lem:qc-estimate1}.
\end{proof}

Next we will prove Lemma \ref{lem:qc-estimate2}. On top of the Assumption~\ref{ass:qu-estimates}, we also suppose that there exists a constant $C_1>0$ such that 
if $z \in B_m$ and $z' \in B_{m'}$ with $m \neq m'$, 
then $|z-z'| \ge C_1 \sqrt{|z z'|}$. %Observe that this is the case if $|z-z'| \ge C_1 \max\{|z|, |z'|\}$ for all $z \in B_m$ and $z' \in B_{m'}$ with $m \neq m'$ as
%$\max\{x_1,x_2\}\geqslant \sqrt{x_1x_2}$ for all $x_1,x_2\in\R_+$.
Then, we have to prove that for any $0<\kappa \leqslant1$, there exists $C_2>1$ such that for any $m\in\N$,
if  $|\zeta - \zeta_m| = \kappa r_m$, then
$$
\frac{1}{C_2} \kappa r_m \le |\phi(\zeta) - \phi(\zeta_m)| \le C_2 \kappa r_m.
$$
We want to emphasize that $C_2$ depends on \blue{$\kappa$}.   

\begin{proof} [Proof of Lemma \ref{lem:qc-estimate2}] 
Fix $n\in\N$ and apply Corollary \ref{cor:KeyInequality} with $\alpha =\zeta_n$, $\beta=0$ and $\gamma=\zeta\in B_n$. Then, since  $0<|\zeta - \zeta_n| = \kappa r_n <\delta_1 |\zeta_n|$ by hypothesis, we obtain  
\begin{equation} \label{eq:keyineq-for-estimate2}
\left| \log \frac{\phi(\zeta) - \phi(\zeta_n)}{\zeta - \zeta_n } - \log \frac{\phi(\zeta_n)}{\zeta_n} \right|_{\calC} 
\le  C(K -1) \sum_{m=M_1}^{\infty} \iint_{B_m} \frac{|\zeta_n| \tinyhsp dx\tinyhsp dy}{| z (z-\zeta)(z-\zeta_n)|},
\end{equation}
for some \blue{$M_1\in\N$}. We provide a different estimate of this integral according to $m$.
\begin{itemize}

\item For $m=n$, since $|z|\geqslant |\zeta_n|-r_n\geqslant \tfrac{3}{4}|\zeta_n|$ for $z\in B_n$,
\begin{align*}
\hspace{38pt}\iint_{B_n}\! \frac{|\zeta_n| \tinyhsp dx\tinyhsp dy}{| z(z-\zeta)(z-\zeta_n)|} 
\le \frac{|\zeta_n|}{\frac{3}{4} |\zeta_n|} \iint_{B_n}\! \frac{\tinyhsp dx\tinyhsp dy}{|(z-\zeta)(z-\zeta_n)|}  
= \frac{4}{3} \iint_{\D}\! \frac{\tinyhsp dx'\tinyhsp dy'}{|z'(z'-\kappa)|},  
\end{align*}
where we represent $\zeta = \zeta_m + \kappa r_m e^{i\theta}$ and then set $z = \zeta_m + r_m e^{i\theta}z'$ and 
$z'=x'+iy'$. %By Lemma~\ref{lem:two-pole}, the last integral is bounded by a quantity that only depends on $0<\kappa\leqslant 1$.
\blue{Note that the last integral is finite.}

\item For $m \neq n$, since $z\in B_m$ and $\zeta,\zeta_n\in B_n$,  
$$
|(z-\zeta)(z-\zeta_n)| \ge C_1^2 |z|\sqrt{|\zeta| |\zeta_n|}\geqslant \left(\tfrac{3}{4}\right)^{\frac{3}{2}}C_1^2|\zeta_m| |\zeta_n|
$$ 
as $|z|\geqslant \tfrac{3}{4}|\zeta_m|$ and $|\zeta|\geqslant \tfrac{3}{4}|\zeta_n|$, and, by Lemma~\ref{lem:simple-pole}, we obtain 
\begin{align*}
\iint_{B_m} \frac{|\zeta_n| \tinyhsp dx\tinyhsp dy}{| z(z-\zeta)(z-\zeta_n)|} 
\le \iint_{B_m} \frac{\tinyhsp dx\tinyhsp dy}{|z|\left(\frac{3}{4}\right)^{\frac{3}{2}} C_1^2 |\zeta_m| }  
= \frac{2\pi}{\left(\tfrac{3}{4}\right)^{\frac{3}{2}}C_1^2} \frac{r_m}{|\zeta_m|}.
\end{align*}
\end{itemize}
Therefore the right hand side of \eqref{eq:keyineq-for-estimate2} is convergent as $\sum_{m=1}^\infty r_m/|\zeta_m| < +\infty$, 
and bounded by a constant $\widetilde{C_2}>0$ which is independent of $n\in\N$. Together with the estimate from Lemma~\ref{lem:qc-estimate1}, we obtain the inequality
$$
\left| \log \frac{\phi(\zeta) - \phi(\zeta_n)}{\zeta - \zeta_n }  \right|_{\calC}\leqslant \left| \log \frac{\phi(\zeta) - \phi(\zeta_n)}{\zeta - \zeta_n } - \log \frac{\phi(\zeta_n)}{\zeta_n} \right|_{\calC}+ \left|  \log \frac{\phi(\zeta_n)}{\zeta_n} \right|_{\calC}\leqslant \widetilde{C_2}+\varepsilon,
$$
where $\eps>0$ is sufficiently large so that \blue{the constant $M_1=M_1(\eps)$ from Lemma~\ref{lem:qc-estimate1} equals 1}. Hence, it is suffices to put $C_2:=\exp (\widetilde{C_2}+\eps)>1$ so that
$$
\frac{1}{C_2}\leqslant \frac{|\phi(\zeta) - \phi(\zeta_n)|}{|\zeta - \zeta_n| }  \leqslant C_2,
$$
and use the assumption that $|\zeta - \zeta_n|=\kappa r_n$ to get the result.
\end{proof}

Finally, we prove the third and last result from Section~\ref{sec:qc-estimate}. Let $\zeta\in\C$ and suppose that there exists $0<\theta<\redf{\pi}$ such that for all $m\in\N$,
\begin{equation}
\label{eq:bm-arg}
B_m \subseteq \{z \in \C\ : \ \arg \zeta + \theta < \arg z < \arg \zeta +2\pi- \theta\}.
\end{equation}
Then there exists a constant $C_3>1$ such that 
\begin{equation}
\frac{1}{C_3} \le |\phi'(\zeta)| \le C_3.\nonumber
\end{equation}
Note that $C_3$ depends on $\theta$ but not on $\zeta$, provided that $\zeta$ satisfies \eqref{eq:bm-arg}.

\begin{proof} [Proof of Lemma \ref{lem:qc-estimate3}] 
Applying Corollary \ref{cor:KeyInequality}  
with $\alpha = \zeta$, $\beta=0$ and $\gamma=(1+\delta)\zeta$ with $0<\delta<\delta_1$ so that $0<|\gamma - \zeta|=\delta|\zeta|<\delta_1 |\zeta|$,  
we obtain
$$
\left| \log \frac{\phi(\gamma) - \phi(\zeta)}{\gamma-\zeta} - \log \frac{\phi(\zeta)}{\zeta} \right|_{\calC} 
\le  C(K -1) \sum_{m=\blue{M_2}}^{\infty} \iint_{B_m} \frac{|\zeta| \tinyhsp dx\tinyhsp dy}{| z(z-\zeta)(z-\gamma)|},
$$
for some \blue{$M_2\in\N$}. For $z \in B_m$, let $\theta'=\arg (\zeta/z)\in (\theta,2\pi-\theta)$. Then $\cos \theta' \le \cos \theta$ and 
\begin{equation}
\label{eq:replace-zeta-gamma}
|z - \zeta|^2 = |z|^2+|\zeta|^2 - 2\,|z| \, |\zeta| \cos \theta' \ge 2\,|z| \, |\zeta| (1- \cos \theta).
\end{equation}
Since $\theta'=\arg (\gamma/z)$, there is a similar inequality replacing $\zeta$ for $\gamma=(1+\delta)\zeta$ in \eqref{eq:replace-zeta-gamma}. Thus,
$$
|z - \zeta|\,|z - \zeta|\geqslant 2(1-\cos \theta)|z|\,|\zeta| \sqrt{1+\delta},
$$   
and
$$
\begin{array}{rl}
\ds\iint_{B_m} \frac{|\zeta| \tinyhsp dx\tinyhsp dy}{| z(z-\zeta)(z-\gamma)|} 
\hspace{-6pt}&\ds\le \frac{|\zeta|}{2(1-\cos \theta) |\zeta| \sqrt{1+\delta}} \iint_{B_m} \frac{\tinyhsp dx\tinyhsp dy}{|z|^2} \vspace{10pt}\\
&\ds\le  \frac{1}{2(1-\cos \theta) \sqrt{1+\delta}} \frac{2\pi r_m^2}{(\frac{3}{4} |\zeta_m|)^2}.  
\end{array}
$$
Note that since $\zeta\notin \overline{\text{supp}\,\mu_\phi}$, the quantity $\phi'(\zeta)$ exists. Again, the sum is finite by the assumption, and letting $\delta \to 0$, we obtain that there exists $\widetilde{C_3}>0$ such that
$$
\left| \log \phi'(\zeta) - \log \frac{\phi(\zeta)}{\zeta} \right|_{\calC}\leqslant \widetilde{C_3}.
$$
Together with Lemma \ref{lem:qc-estimate1}, we obtain the desired inequality by proceeding as before and putting $C_3:=\exp(\widetilde{C_3}+\eps)>1$. \blue{The last claim of the lemma follows from the fact that by choosing $M_2=M_2(\eta)\geqslant M_1(\eps)$ sufficiently large we can make both $\widetilde{C_3}$ and $\eps$ sufficiently small.}
\end{proof}

To conclude this appendix, we prove Lemma~\ref{lem:holom-dependence-of-centers} which says that the sequence of centers $(c_{N+1}(\mathbf{w}),c_{N+2}(\mathbf{w}),\dots)$ depends continuously on $\mathbf{w}=(w_N,w_{N+1},\dots)\in\overline{\D}(\frac{1}{2}, \frac{1}{8})^{\N_N}$. This result was important for the shooting argument in Section~\ref{sec:shooting}.

\begin{proof}[Proof of Lemma~\ref{lem:holom-dependence-of-centers}]
\label{proof:lem:holom-dependence-of-centers}
The sets $\phi_\mathbf{w}(E_{\pm n})$ may be distorted, but Lemmas~\ref{lem:qc-estimate1} and \ref{lem:qc-estimate2} ensure that they will still satisfy the Assumption~\ref{ass:qu-estimates} with different constants. So with $K$ replaced by $K^2$, we obtain a similar estimate as Lemma~\ref{lem:qc-estimate1}. In particular, 
for any $\eps>0$, and any compact set $X\subseteq \barD_R$, there exists $T \ge N$ such that if $\mathbf{w}, \mathbf{w}' \in \overline{\D}(\frac{1}{2}, \frac{1}{8})^{\N_N}$ and $w_n = w_n'$ for $N \le n <T$, then
$$
\left| \log \frac{\phi_{\mathbf{w'}} \circ \phi_{\mathbf{w}}^{-1}(\zeta)}{\zeta} \right| < \frac{\eps}{2eR},\quad \text{ for } \zeta\in \C\sminus\{0\},
$$ 
that is, if $\xi=\phi_\mathbf{w}^{-1}(\zeta)$,
$$
\left| \log \frac{\phi_{\mathbf{w'}}(\xi)}{\xi} - \log \frac { \phi_{\mathbf{w}}(\xi)}{\xi} \right| < \frac{\eps}{2eR},\quad \text{ for } \xi \in \C \sminus \{0\}.  
$$ 
Since $\log(\phi_{\mathbf{w}'}(\xi)/\xi),\log(\phi_{\mathbf{w}}(\xi)/\xi)\in \mathbb D$ and $|e^{z'}-e^z|\leqslant e|z'-z|$ for all $z,z'\in\mathbb D$, we have
$$
\left| \frac{\phi_{\mathbf{w'}}(\xi)}{\xi} - \frac { \phi_{\mathbf{w}}(\xi)}{\xi}\right|<\frac{\varepsilon}{2R},\quad \text{ for } \xi \in \C \sminus \{0\}, 
$$
and
$$
|\phi_{\mathbf{w}'}(\xi)-\phi_{\mathbf{w}}(\xi)|< \frac{\varepsilon}{2},\quad \text{ for } \xi\in X\subseteq \overline{\mathbb D}_R.
$$
If $\mathbf{w}'' \in\overline{\D}(\frac{1}{2}, \frac{1}{8})^{\N_N}$ is chosen so that $|w_n''-w_n|$ are small enough for $N \le n <T$ and $w_n''=w_n'$ for $T \le n$, then 
$||\mu_{\phi_{\mathbf{w}''}} - \mu_{\phi_{\mathbf{w}'}}||_{\infty}$ will be small, and we can achieve $|\phi_{\mathbf{w}''}(\zeta) - \phi_{\mathbf{w}'}(\zeta)|< \eps/2$ on $X$.  
Therefore we have $|\phi_{\mathbf{w}''}(\zeta) - \phi_{\mathbf{w}}(\zeta)|< \eps$ on $X$, and this proves the continuity of the map $\mathbf{w} \mapsto \phi_{\mathbf{w}}$.
 %(An alternative proof will be to use $L^p$-norm for $\mu_{\phi_{\mathbf{w}}}$ with $p$ sufficiently large, 
%because the proof of the measurable Riemann mapping theorem in \cite[Theorem 2 in Chapter 5.B]{ahlfors06} was carried out with $L^p$-norm with $p$ large, 
%so the continuity will follow from teh construction.) 
\par
Finally, recall that for each $n \ge N$, $c_n(\mathbf{w})$ was defined by a local composition of $\phi_{\mathbf{w}}$ and $g^{-1}$, so it is continuous.  
The final claim follows from the definition of the product topology.  
\end{proof}

\bibliography{bibliography}

\end{document}